\def\@themcountersep{}
\newtheorem{theorem}{Theorem}[section]
\newtheorem{proposition}[theorem]{Proposition}
\renewcommand{\algorithmicrequire}{\textbf{Input:}}
\renewcommand{\algorithmicensure}{\textbf{Output:}}
\begin{document}
\title{Centering ADMM for the Semidefinite Relaxation of the QAP\thanks{This research was supported by the Japan Society for the Promotion of Science through a Grant-in-Aid for Challenging Exploratory Research (17K18946) and a Grant-in-Aid for Scientific Research ((B)19H02373) of the Ministry of Education, Culture, Sports, Science and Technology of Japan.} }

\author{Shin-ichi Kanoh\thanks{
Graduate School of Systems and Information Engineering, University of Tsukuba, Tsukuba, Ibaraki 305-8573, Japan. email: s1930138@s.tsukuba.ac.jp 
}
and 
Akiko Yoshise\thanks{Corresponding author. Faculty of Engineering, Information and Systems, University of Tsukuba, Tsukuba, Ibaraki 305-8573, Japan. email: yoshise@sk.tsukuba.ac.jp
}     
}

\date{January 2020 \\
Revised April 2020}

\maketitle

\begin{abstract}
We propose a new method for solving the semidefinite (SD) relaxation of the quadratic assignment problem (QAP), called Centering ADMM. Centering ADMM is an alternating direction method of multipliers (ADMM) combining the centering steps used in the interior-point method. The first stage of Centering ADMM updates the iterate so that it approaches the central path by incorporating a barrier function term into the objective function, as in the interior-point method. If the current iterate is sufficiently close to the central path with a sufficiently small value of the barrier parameter, the method switches to the standard version of ADMM. We show that Centering ADMM (not employing a dynamic update of the penalty parameter) has global convergence properties. To observe the effect of the centering steps, we conducted numerical experiments with SD relaxation problems of instances in QAPLIB. The results demonstrate that the centering steps are quite efficient for some classes of instances.
\end{abstract}

{\bf Key words:}
Quadratic assignment problem; Semidefinite relaxation;  Alternating direction method of multipliers (ADMM); Interior-point method; Centering step; Barrier function

{\bf AMS subject classifications:}
  	90C05, 90C22, 90C25

\section{Introduction}
\label{sec: Introduction}

The quadratic assignment problem (QAP) in the trace formulation \cite{aEDWARDS1980} is given by 
\begin{equation}
\label{eq:QAP}
\begin{array}{lll}
\mbox{QAP} & \mbox{minimize} & \langle FXD - C , X \rangle, \\
                   & \mbox{subject to} & X \in \Pi_n, 
\end{array}
\end{equation}
where $F, D \in \mathbb{S}^n$ are $n \times n$ real symmetric matrices, $C$ is a real $n \times n$ matrix, $\langle \cdot, \cdot \rangle$ denotes the trace inner product $\langle A, B \rangle = \mbox{Tr}(A^TB)$,  and $\Pi_n$ denotes the set of all $n \times n$ permutation matrices. 
The QAP was initially introduced to describe a location problem where the task is to assign $n$ facilities to $n$ locations in a way that minimizes the total cost \cite{aKOOPMANS1957, aLAWLER1963}.
It has many applications in areas as divergent as network design, VSLI design, and image processing (see, e.g., \cite{bPARDALOS1993, aPARDALOS1994, bCELA1998}). 

It is known that the QAP is NP-hard (see, e.g., \cite{aSAHNI1976}) and it has remained difficult to solve even if the size of the problem is moderate, e.g., $n=30$ \cite{aANSTREICHER2003}.
This fact implies that finding better lower and upper bounds of the optimal value is quite important to solve it.
An efficient tool for finding such bounds is semidefinite (SD) relaxation (see, e.g., \cite{aZHAO1998, aPOVH2009, aRENDL2007, aDEKLERK2010, aLUO2010, aPONG2016, aOLIVEIRA2018}).
SD relaxation in \cite{aZHAO1998} uses facial reduction to guarantee strict feasibility for both the relaxed problem and its dual and simplifies the constraints by making many of them redundant. 

However, SD relaxation still often forces us to solve a large-scale semidefinite programs.
Recently, Oliveira, Workowicz, and Xu  \cite{aOLIVEIRA2018} performed computational experiments showing that their alternating direction method of multipliers (ADMM) is promising for solving the SD relaxation of the QAP.
The authors derived the update formula for solving the SD relaxation of the QAP proposed in \cite{aZHAO1998}.
They also derived upper and lower bounds of the QAP from the solution obtained by their ADMM and compared their bounds with existing bounds. 

The ADMM is a first-order method, which requires less computation per iteration and is highly scalable. 
However, its computation often becomes unstable and requires a significant number of iterations to compute an accurate solution.
On the other hand, the interior-point method is a second-order method and computationally expensive. 
Still, it is known to be stable and to require only a small number of iterations due to the centering steps, which forces the current iterate closer to the central path.
Our motivation is to accelerate the ADMM proposed in \cite{aOLIVEIRA2018} by combining the benefits of the ADMM and the interior-point method.
In this paper, we devise a new algorithm, called Centering ADMM, which is an ADMM combining the centering steps of the interior-point method in its first stage.

A similar approach was taken by Lin et al. \cite{LIN2018}, where the authors proposed an ADMM-Based Interior Point Method (ABIP) for solving large-scale linear programs. 
Their ABIP and our Centering ADMM are similar in the sense that both methods add a centering effect in the search direction by using a barrier function. 
However, Centering ADMM is different from ABIP for the following reasons: 
Centering ADMM is limited to solving the SD relaxation of the QAP.
Also, it performs centering steps only in the primal problem and in its first stage.
It reverts to (the standard) ADMM if the current iterate is sufficiently close to the central path with a sufficiently small value of the barrier parameter.
On the other hand, ABIP employs centering steps in both primal and dual problems using the homogeneous self-dual form of linear programs throughout its iterations.

To observe the effect of the centering steps, we conducted numerical experiments using instances in QAPLIB \cite{QAPLIB} and compared the solutions obtained with the ADMM in \cite{aOLIVEIRA2018} and with Centering ADMM.
The results demonstrate that the centering steps are quite efficient for some classes of instances.

The organization of the paper is as follows: After giving a brief introduction of (the standard) ADMM in section \ref{sec: Standard ADMM}, we describe its in details for solving the SD relaxation of the QAP proposed in  \cite{aOLIVEIRA2018} in section \ref{sec: SD relaxation}.
We provide our new method, Centering ADMM, in section \ref{sec: Centering ADMM}.
Then, we show that Centering ADMM (not employing a dynamic update of the penalty parameter) has global convergence properties in section \ref{sec:global convergence}.
In section  \ref{sec: Numerical experiments}, we numerically compare these two methods in terms of their lower bounds of the QAP for instances in QAPLIB \cite{QAPLIB}.

\section{Standard ADMM}
\label{sec: Standard ADMM}

Here, we give a brief introduction of the (standard) alternating direction method of multipliers.
To contrast with Centering ADMM, we will refer to the method as the Standard ADMM in the following.

The Standard ADMM was proposed by Glowinski and Marrocco \cite{aGLOWINSKI1975} and Gabay and Mercier \cite{aGABAY1976} for solving the following type of optimization problem:

\begin{equation}
\label{eq:ADMM}
\begin{array}{ll}
\mbox{minimize} &  f(x) + g(z) \\
\mbox{subject to} &  Ax + Bz = c, 
\end{array}
\end{equation}
where $x \in\mathbb{R}^n$, $z \in\mathbb{R}^m$ , $A \in\mathbb{R}^{k \times n}$, $B \in\mathbb{R}^{k \times m}$, $c \in\mathbb{R}^k$ and $f: \mathbb{R}^n \rightarrow \mathbb{R} \cup \{+\infty\}$ and $g: \mathbb{R}^m \rightarrow \mathbb{R} \cup \{+\infty\}$ are closed proper convex functions.

By introducing a penalty parameter $\rho > 0$, the augmented Lagrangian function for problem (\ref{eq:ADMM}) is given by
\begin{equation}
\label{eq:augmented Lagrangian}
\notag
L_\rho(x,z,y) := f(x) + g(z) + \langle y , Ax+Bz-c \rangle + \frac{\rho}{2} \| Ax+Bz-c \|^2,
\end{equation}
where $y \in\mathbb{R}^k$ is the dual variable or Lagrange multiplier.
Using the augmented Lagrangian function, ADMM updates the variables $(x^k, y^k, z^k)$ as follows:
\begin{eqnarray*}
x^{k+1} & := & \mbox{argmin}_{x} L_\rho(x, z^k, y^k), \\
z^{k+1} & := & \mbox{argmin}_{z} L_\rho(x^{k+1}, z, y^k), \\
y^{k+1} & := & y^k + \rho(Ax^{k+1}+Bz^{k+1}-c).
\end{eqnarray*}
The third update formula is a feature of the Standard ADMM. It updates the dual variable $y$ in its gradient direction, i.e., in the direction that increases the objective function value of the dual problem. For this reason, the Standard ADMM is sometimes considered to be a method that solves the dual problem.

\section{Standard ADMM for the SD relaxation of the QAP}
\label{sec: SD relaxation}

As shown in, e.g., \cite{aPARDALOS1994}, the set $\Pi_n$ of all permutation matrices can be represented as 
\begin{equation}
\Pi_n = \mathcal{O}_n \cap \mathcal{E}_n \cap \mathcal{N}_n = \mathcal{O}_n \cap \mathcal{E}_n \cap \mathcal{Z}_n, \notag
\end{equation}
where 
$\mathcal{O}_n := \{X \in \mathbb{R}^{n \times n} | XX^T = X^TX = I\}$, 
$\mathcal{E}_n := \{X \in \mathbb{R}^{n \times n} | Xe = X^Te = e\}$, $\mathcal{N}_n := \{X \in \mathbb{R}^{n \times n} | X \geq 0\}$, 
$\mathcal{Z}_n := \{X \in \mathbb{R}^{n \times n} | X \circ X - X = O \}$, 
$I \in \mathbb{R}^{n \times n}$ is the identity matrix, $e \in \mathbb{R}^n$ is the vector of ones, $A \circ B $ is the Hadamard product of $A \in \mathbb{R}^{n \times n}$ and  $B \in \mathbb{R}^{n \times n}$.
Using this fact, Zhao et al. \cite{aZHAO1998} showed that the QAP  (\ref{eq:QAP}) is equivalent to the following problem $\mbox{QAP}_{\mathcal O}$:
\begin{equation}
\label{eq:QAP_O}
\notag
\begin{array}{lll}
\mbox{QAP}_{\mathcal O} & \mbox{minimize}_{X} & \langle FXD - C , X \rangle \\
                   & \mbox{subject to} & XX^T = I, \\
                   & & X^TX = I, \\
                   & & \|Xe - e\|^2 + \|X^Te - e\|^2 = 0, \\
                   & & X \circ X - X = O.  
\end{array}
\end{equation}
We also define $\mathcal{S}^n := \{ X \in \mathbb{R}^{n \times n} \mid X = X^T  \}$, and for any $A, B \in \mathcal{S}^n$, we say $A \succeq B$ if $A-B$ is positive semidefinite.
By considering the dual problem of the Lagrange dual of  $\mbox{QAP}_{\mathcal O}$ and projecting the dual problem onto the minimal face,  they also showed that the following problem $\mbox{QAP}_{\rm R1}$ gives an SDP relaxation problem for the QAP  (\ref{eq:QAP}):
\begin{equation}
\label{eq:QAP_R1}
\notag
\begin{array}{lll}
\mbox{QAP}_{\rm R1}  & \mbox{minimize}_{R}  & \mbox{tr}(\hat{V}^TL_Q\hat{V} R) \\
                                 & \mbox{subject to} & \mathcal{G}_J (\hat{V} R \hat{V}^T) = E_{00}, \\
                                 & & R \succeq 0,
\end{array}
\end{equation}
where $L_Q \in\mathcal{S}^{n^2+1}$, $\hat{V} \in\mathbb{R}^{n^2+1 \times (n-1)^2+1}$, $E_{00} \in \mathcal{S}^{n^2+1}$ is the matrix whose $(1,1)$-element is one and all other elements are zero, $J \subseteq \{(i,j) \mid 1 \leq i, j \leq n^2\}$ are given, $R \in\mathcal{S}^{(n-1)^2+1}$ is the variable matrix, and $\mathcal{G}_J : \mathcal{S}^{n^2+1} \rightarrow \mathcal{S}^{n^2+1}$ is the gangster operator defined by
\begin{equation}
\label{eq:G_J}
(\mathcal{G}_J(Y))_{ij} := 
\begin{cases}
Y_{ij} & (i,j) \in J \ \mbox{or} \ (j,i) \in J,\\
0 & \text{otherwise}.
\end{cases}
\end{equation}

More precisely, the matrix $L_Q$ is as follows: 
\begin{equation}
\label{eq:L_Q}
L_Q := 
\begin{pmatrix}
0 & -\frac{1}{2} \mbox{vec} (C)^T \\
-\frac{1}{2} \mbox{vec} (C) & D \otimes F
\end{pmatrix},
\end{equation}
where  $\mbox{vec}(C) \in \mathbb{R}^{n^2}$ is the vector formed by stacking the columns of  $C$ on top of
one another, and  $A \otimes B \in \mathbb{R}^{mp \times nq}$ is the Kronecker product of $A \in \mathbb{R}^{m\times n}$ and $B \in \mathbb{R}^{p\times q}$.
The matrix $\hat{V}$ is the normalized matrix of $\bar{V} \in \mathbb{R}^{n^2+1 \times (n-1)^2+1}$
given by 
\begin{equation}
\bar{V} := 
\begin{pmatrix}
1 & 0^T \\
\frac{1}{n} (e \otimes e) & V \otimes V 
\end{pmatrix},
\notag
\end{equation}
where $V \in \mathbb{R}^{n \times n-1}$ is a full-rank matrix
\begin{equation}
V := 
\begin{pmatrix}
I_{n-1} \\
-e^T
\end{pmatrix},
\notag
\end{equation}
and $I_{n-1}$ is the $(n-1) \times (n-1)$ element matrix. 
As a result, the matrices $V$ and $\hat{V}$ satisfy $V^Te=0$ and $\hat{V}^T\hat{V} = I$.

Note that $\mbox{QAP}_{\rm R1} $ has a relative interior feasible solution since it is obtained by projecting an SD relaxation problem to the minimal face \cite{aZHAO1998}.

In \cite{aOLIVEIRA2018}, the authors succeeded in deriving upper bounds and lower bounds of the QAP by applying the Standard ADMM to $\mbox{QAP}_{\rm R1} $.
In what follows, we show how we can solve  $\mbox{QAP}_{\rm R1} $ by using ADMM according to the descriptions in \cite{aOLIVEIRA2018}.

\subsection{Representation of the QAP for which the Standard ADMM is applicable}

In \cite{aOLIVEIRA2018}, the authors  represent $\mbox{QAP}_{\rm R1} $ as a problem $\mbox{QAP}_{\rm R2}$ having two variables $R$ and $Y$ by introducing a new variable $Y$ and adding the equation $Y=\hat{V}R\hat{V}^T$:
\begin{equation}
\label{eq:QAP_R2}
\begin{array}{lll}
\mbox{QAP}_{\rm R2} & \mbox{minimize}_{R,Y}   &  \langle L_Q,Y \rangle  \\
                          & \mbox{subject to} & \mathcal{G}_J (Y) = E_{00},  \\
                          &                            & Y = \hat{V}R\hat{V}^T, \\
                          &                            & R \succeq 0.
\end{array}
\end{equation}
Next, they consider the following optimization problem for which the Standard ADMM is applicable:
\begin{equation}
\notag
\begin{array}{lll}
\mbox{QAP}_{\rm R3} & \mbox{minimize}_{R,Y}   &  \langle L_Q,Y \rangle + \mathcal{I}(R) + \mathcal{I}(Y)  \\
                          & \mbox{subject to} &  Y = \hat{V}R\hat{V}^T,
\end{array}
\end{equation}
where the second and third constraints of  $\mbox{QAP}_{\rm R2}$ are combined in the objective function as the corresponding indicator functions below:
\begin{equation}
\label{eq:I}
\mathcal{I}(R) :=
\begin{cases}
0 & \text{if} \ R \succeq 0, \\
\infty & \text{otherwise},
\end{cases}
\ \ \ \
\mathcal{I}(Y) :=
\begin{cases}
0 &\text{if} \  \mathcal{G}_J (Y) = E_{00}, \\
\infty & \text{otherwise}.
\end{cases}
\end{equation}

\subsection{Update formula of the variables in the Standard ADMM}

The augmented Lagrangian function for $\mbox{QAP}_{\rm R3}$ is given by
\begin{equation}
\notag
L_\rho(R,Y,Z) := \langle L_Q,Y \rangle + \mathcal{I}(R) + \mathcal{I}(Y) + \langle Z , Y - \hat{V}R\hat{V}^T \rangle + \frac{\rho}{2} \|Y - \hat{V}R\hat{V}^T \|^2_F,
\end{equation}
where $Z$ is the dual variable matrix.
Using this function, the variables are updated as follows:
\begin{eqnarray}
\label{eq:ADMM-R}
R^{k+1} &:=& \mbox{argmin}_{R} L_\rho(R,Y^k,Z^k), \\
\label{eq:ADMM-Y}
Y^{k+1} &:=& \mbox{argmin}_{Y} L_\rho(R^{k+1},Y,Z^k), \\
\label{eq:ADMM-Z}
Z^{k+1} &=:&  Z^k + \rho(Y^{k+1} - \hat{V}R^{k+1}\hat{V}^T).
\end{eqnarray}

As shown in \cite{aOLIVEIRA2018}, the above updates can be explicitly calculated  by
\begin{eqnarray}
\notag \label{eq:ADMM-R2}
R^{k+1} &=& \mathcal{P}_{\mathcal{S}_+}(\hat{V}^T(Y^k+\frac{1}{\rho}Z^k)\hat{V}), \\
\label{eq:ADMM-Y2}
Y^{k+1} &=& E_{00} + \mathcal{G}_{J^C}(\hat{V} R^{k+1} \hat{V}^T - \frac{1}{\rho} (L_Q +Z^k )), \\
\label{eq:ADMM-Z2}
Z^{k+1} &=&  Z^k + \rho(Y^{k+1} - \hat{V}R^{k+1}\hat{V}^T),
\end{eqnarray}
where 
$\mathcal{P}_{\mathcal{S}_+}$ is the orthogonal projection onto $\mathcal{S}_+$ and $J^C$ is a set given by
\begin{equation}
\notag
J^C := \{ (i,j) \mid 1 \leq i,j, \leq n^2+1)\} \setminus J.
\end{equation}

In \cite{aOLIVEIRA2018}, the authors added the constraints $0 \leq Y_{ij} \leq 1(\forall i,j)$ if $0 \leq \hat{V}R\hat{V}^T\leq 1$ are satisfied and showed that adding these constraints has a profound effect in accelerating the convergence of the Standard ADMM.

\subsection{Stopping conditions of the Standard ADMM}

The stopping conditions are given by the optimal conditions of the Lagrangian function,
\begin{equation}
\notag 
L(R,Y,Z) := \langle L_Q,Y \rangle + \mathcal{I}(R) + \mathcal{I}(Y) + \langle Z , Y - \hat{V}R\hat{V}^T \rangle.
\end{equation}
Let us define $f(R) := \mathcal{I}(R)$ and $g(Y) :=\langle L_Q,Y \rangle+ \mathcal{I}(Y)$.
Then, the Karush-Khun-Tucker conditions of $\mbox{QAP}_{\rm R3}$ are given by
\begin{eqnarray}
Y - \hat{V}R\hat{V}^T & = & O, \label{zyoukensiki3} \\
\partial f(R) - \hat{V}^TZ\hat{V} & \ni & O, \label{zyoukensiki1} \\
\partial g(Y) + Z & \ni & O. \label{zyoukensiki2} 
\end{eqnarray}
Here, as in Section 3.3 of \cite{aBOYD2011}, we call  (\ref{zyoukensiki3}) the primal feasibility constraint, and (\ref{zyoukensiki1}) and (\ref{zyoukensiki2}) the dual feasibility constraints, respectively. 
We see that condition (\ref{zyoukensiki2}) is always satisfied at each iteration $(R^{k+1},Y^{k+1},Z^{k+1})$.
The update $Y^{k+1} = \mbox{argmin}_{Y} L_\rho (R^{k+1},Y,Z^k)$ implies that 
\begin{equation}
\notag
\partial g(Y^{k+1}) + Z^k + \rho (Y^{k+1} - \hat{V}R^{k+1}\hat{V}^T)
 \ni O,
\end{equation}
and the update $Z^{k+1} = Z^k + \rho (Y^{k+1} - \hat{V}R^{k+1}\hat{V}^T)$ implies that
\begin{equation}
\notag
\partial g(Y^{k+1}) + Z^{k+1} \ni O.
\end{equation}

Thus, condition (\ref{zyoukensiki2}), i.e., the dual feasibility of $Y$, is always satisfied. 
This implies that we only need to consider  the primal feasibility  (\ref{zyoukensiki3}) of $Z$ and the dual feasibility (\ref{zyoukensiki1}) of $R$ as stopping conditions. 
The primal feasibility  (\ref{zyoukensiki3}) of $Z$ can be measured by the Frobenius norm of the residual vector  $r_{\rm p} = \| Y^{k+1} - \hat{V}R^{k+1}\hat{V}^T \|_F$.
To measure the dual feasibility (\ref{zyoukensiki1}) of $R$, we focus on the update formula (\ref{eq:ADMM-R}) of $R^{k+1}$.
Since we set $R^{k+1} := \mbox{argmin}_{R} L_\rho (R,Y^k,Z^k)$, $R^{k+1}$ satisfies
\begin{equation}
\notag
\partial f(R^{k+1}) - \hat{V}^TZ^k\hat{V} + \rho (R^{k+1}-\hat{V}^TY^k\hat{V}) \ni O,
\end{equation}
and this implies that 
\begin{equation}
\notag
\partial f(R^{k+1}) - \hat{V}^TZ^{k+1}\hat{V} + \hat{V}^TZ^{k+1}\hat{V} - \hat{V}^TZ^k\hat{V} + \rho (R^{k+1}-\hat{V}^TY^k\hat{V}) \ni O,
\end{equation}
and hence,
\begin{equation}
\notag
\partial f(R^{k+1}) - \hat{V}^TZ^{k+1}\hat{V} \ni \rho \hat{V}^T (Y^k - Y^{k+1})\hat{V}.
\end{equation}
This implies that if $\rho \hat{V}^T (Y^k - Y^{k+1})\hat{V} = O$ holds, the dual feasibility (\ref{zyoukensiki1}) of $R$ is guaranteed, and hence, the Frobenius norm of the matrix $r_{\rm d} = \| \rho \hat{V}^T (Y^k - Y^{k+1})\hat{V} \|_F$ can be considered as the residual value of the dual problem.

If the values of $r_{\rm p}$ and $r_{\rm d}$ at the iterate $(R^{k+1},Y^{k+1},Z^{k+1})$ are sufficiently small, we stop the Standard ADMM.
In fact, in \cite{aOLIVEIRA2018}, the authors chose $10^{-5}$ or $10^{-12}$ as tolerances, for $r_{\rm p}$ and $r_{\rm d}$, respectively, whereby if  $r_{\rm p}$ and $r_{\rm d}$ become smaller than these tolerances, the update is stopped.

\section{Centering ADMM}
\label{sec: Centering ADMM}

We propose a new algorithm, called Centering ADMM, to solve the SDP relaxation problem of QAP by combining the path-following scheme employed by the interior point methods with the Standard ADMM described in the previous section.

As in the interior point method, we incorporate a barrier function term with a barrier parameter $\mu > 0$ in the objective function of problem $\mbox{QAP}_{\rm R2}$ (\ref{eq:QAP_R2}) as follows:
\begin{equation}
\label{eq:BQAP_R2}
\begin{array}{lll}
\mbox{BQAP}_{\rm R2}  & \mbox{minimize}_{R,Y}   &  \langle L_Q , Y \rangle - \mu\mbox{log}(\mbox{det}(R))  \\
                      & \mbox{subject to} &  \mathcal{G}_J (Y) = E_{00}, \\
                      &                            &  Y = \hat{V} R \hat{V}^T, \\
                      &                            &  R \succ  O.
\end{array}
\end{equation}
We consider the following problem for which ADMM is applicable:
\begin{equation}
\label{eq:BQAP_R3}
\begin{array}{lll}
\mbox{BQAP}_{\rm R3}  & \mbox{minimize}_{R,Y}   &  \langle L_Q , Y \rangle - \mu\mbox{log}(\mbox{det}(R))  + \mathcal{I}(Y) \\
                      & \mbox{subject to} & Y = \hat{V} R \hat{V}^T.
\end{array}
\end{equation}
In what follows, we derive the update formula for solving the $\mbox{BQAP}_{\rm R3}$ with ADMM.
First, let us consider the following augmented Lagrangian function:
\begin{equation}
\label{eq:BQAP-function}
L_\rho^{\rm BQAP} (R,Y,Z) :=\langle L_Q , Y \rangle - \mu\mbox{log}(\mbox{det}(R))  + \mathcal{I}(Y) + \langle Z , (Y - \hat{V} R \hat{V}^T) \rangle + \frac{\rho}{2} \| Y - \hat{V} R \hat{V}^T \|^2_F.
\end{equation}

The updating formulas for $Y$ and $Z$ are the same as in the Standard ADMM, defined by (\ref{eq:ADMM-Y}) and (\ref{eq:ADMM-Z}), and have the explicit forms  (\ref{eq:ADMM-Y2}) and (\ref{eq:ADMM-Z2}).
On the other hand, the updating formula for $R$ is defined by 
\begin{eqnarray}
\notag 
R^{k+1} & := & \mbox{argmin}_{R} L_\rho^{\rm BQAP} (R,Y^k,Z^k) \\
\label{eq:CADMM-R1}
           & = & \mbox{argmin}_{R} \left\{- \mu\mbox{log}(\mbox{det}(R)) + \langle Z , (Y - \hat{V} R \hat{V}^T) \rangle + \frac{\rho}{2} \| Y - \hat{V} R \hat{V}^T \|^2_F \right\}.
\end{eqnarray}

We can easily check that the function $F(R)=- \mu\mbox{log}(\mbox{det}(R)) + \langle Z , (Y - \hat{V} R \hat{V}^T) \rangle + \frac{\rho}{2} \| Y - \hat{V} R \hat{V}^T \|^2_F$ is strictly convex for any $R \in \mathcal{S}^n_{++}$, and hence, $F(R)$ has a unique minimum solution in $\mathcal{S}^n_{++}$.
Using this result, we obtain the following proposition.

\begin{proposition}
Suppose that we obtain a spectral decomposition of the matrix $\hat{V}^T Z \hat{V}\ + \rho\hat{V}^T Y \hat{V}$ into an orthogonal matrix $P$ and a diagonal matrix $D$, as
\begin{equation}
\label{eq:decomposition}
\hat{V}^T Z \hat{V}\ + \rho\hat{V}^T Y \hat{V}=PDP^T.
\end{equation}
Then, the new iterate $R^{k+1}$ in (\ref{eq:CADMM-R1}) is given by
\begin{equation}
\label{eq:CADMM-R2}
R^{k+1} = P\bar{R}P^T,
\end{equation}
where the matrix $\bar{R}$ is a diagonal matrix whose elements are 
\begin{equation}
\label{eq:barR1}
\bar{R}_{ii} = \frac{D_{ii} + \sqrt{D_{ii}^2 + 4 \rho \mu} }{2 \rho} \ (i=1,2,\ldots,n).
\end{equation}
\end{proposition}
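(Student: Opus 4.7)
The plan is to characterize the unique minimizer by its first-order optimality condition and then diagonalize. Strict convexity of $F(R)=-\mu\log\det(R)+\langle Z,Y-\hat VR\hat V^T\rangle+\tfrac{\rho}{2}\|Y-\hat VR\hat V^T\|_F^2$ on $\mathcal{S}^n_{++}$ has already been noted, so any critical point in $\mathcal{S}^n_{++}$ is the unique global minimizer; I only need to exhibit $R^{k+1}$ of the claimed form and verify that it is critical.

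First I would compute $\nabla_R F$. The gradient of $-\mu\log\det(R)$ on $\mathcal{S}^n_{++}$ is $-\mu R^{-1}$. For the linear term, using the cyclic property, $\langle Z, \hat VR\hat V^T\rangle=\langle \hat V^T Z\hat V,R\rangle$, so its gradient is $-\hat V^T Z\hat V$. For the quadratic term I would expand and use the crucial identity $\hat V^T\hat V=I$ stated earlier, which gives
\begin{equation}
\notag
\|\hat V R\hat V^T\|_F^2=\operatorname{tr}(\hat VR\hat V^T\hat VR\hat V^T)=\operatorname{tr}(R^2),
\end{equation}
and $\langle Y,\hat VR\hat V^T\rangle=\langle \hat V^TY\hat V,R\rangle$. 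Hence the gradient of $\tfrac{\rho}{2}\|Y-\hat VR\hat V^T\|_F^2$ in $R$ is $\rho(R-\hat V^TY\hat V)$. Setting $\nabla_R F=O$ and rearranging yields the matrix equation
\begin{equation}
\label{eq:optcond}
\rho R-\mu R^{-1}=\hat V^T Z\hat V+\rho\hat V^T Y\hat V=PDP^T,
\end{equation}
where the last equality is the spectral decomposition (\ref{eq:decomposition}).

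Next I would substitute the ansatz $R=P\bar R P^T$ with $\bar R$ diagonal positive into (\ref{eq:optcond}). Since $P$ is orthogonal, $R^{-1}=P\bar R^{-1}P^T$, and conjugating by $P^T$ reduces the matrix equation to the diagonal equation $\rho\bar R-\mu\bar R^{-1}=D$. Entry-wise this is the scalar quadratic $\rho\bar R_{ii}^2-D_{ii}\bar R_{ii}-\mu=0$, whose discriminant $D_{ii}^2+4\rho\mu$ is strictly positive. The two roots have opposite signs (their product is $-\mu/\rho<0$), so requiring $\bar R_{ii}>0$ forces selection of the larger root, giving exactly (\ref{eq:barR1}). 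This $\bar R$ is diagonal with strictly positive entries, so $R^{k+1}=P\bar R P^T\in\mathcal{S}^n_{++}$ is admissible.

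Finally I would close the argument by combining existence with uniqueness: the constructed $R^{k+1}$ satisfies the optimality condition (\ref{eq:optcond}) in $\mathcal{S}^n_{++}$, and strict convexity of $F$ together with the coercivity of the log-barrier (which prevents minimizers from escaping to the boundary of $\mathcal{S}^n_{++}$) guarantees it is the unique minimizer. The only step that requires any care is the reduction $\|\hat VR\hat V^T\|_F^2=\operatorname{tr}(R^2)$, which is not automatic for a general tall matrix $\hat V$ but follows here from $\hat V^T\hat V=I$; everything else is routine matrix calculus and a scalar quadratic.
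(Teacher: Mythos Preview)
Your proof is correct and follows essentially the same approach as the paper: set the gradient of the augmented Lagrangian in $R$ to zero, use $\hat V^T\hat V=I$ to obtain the equation $\rho R-\mu R^{-1}=PDP^T$, and diagonalize. The only stylistic difference is that the paper sets $\bar R=P^TRP$ and then asserts $\bar R$ must be diagonal from the equation $\rho\bar R-\mu\bar R^{-1}=D$, whereas you posit a diagonal ansatz and verify it via uniqueness; your route is arguably cleaner since it avoids having to justify that step, but the content is the same.
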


\begin{proof}
$R^{k+1}$ can be calculated in a similar way to what is proposed in \cite{aBOYD2011}.
Since the function $L_\rho^{\rm BQAP} (R,Y,Z) $ in (\ref{eq:BQAP-function}) is strictly convex, the gradient of $L_\rho^{\rm BQAP} (R,Y,Z) $ at $R^{k+1}$ should be $O$, and hence, we have
\begin{equation}
\notag
- \mu R^{-1} - \hat{V}^T Z \hat{V}\ + \rho ( -\hat{V}^T Y \hat{V}  + R  ) = O.
\end{equation}
The above equation and the decomposition (\ref{eq:decomposition}) imply that 
\begin{eqnarray*}
\rho R - \mu R^{-1} & = & \hat{V}^T Z \hat{V}\ + \rho\hat{V}^T Y \hat{V} \\
                               & = & PDP^T,
\end{eqnarray*}
and, by setting  $\bar{R} = P^TRP$, we have
\begin{equation}
\notag \label{eq:barR2}
\rho \bar{R} - \mu \bar{R}^{-1} = D.
\end{equation}
Thus, it turns out that $\bar{R}$ is a diagonal matrix, and  $\bar{R}$ is given by (\ref{eq:barR1}), since $R$ and $\bar{R}$ should be positive semidefinite.
\qed
\end{proof}

The update formulas of  Centering ADMM consist of  (\ref{eq:CADMM-R2}) for $R$,  (\ref{eq:ADMM-Y2}) for $Y$ and  (\ref{eq:ADMM-Z2}) for $Z$.

For the Standard ADMM, it was shown  in \cite{aBOYD2011} that the following dynamic update of the penalty parameter $\rho$, depending on the residual values $r_{\rm p}$ and $r_{\rm d}$, $\tau^{\rm incr}>1,\tau^{\rm decr}>1$, and $\theta>1$, is an efficient way to accelerate convergence:
\begin{equation}
\label{eq:update rho}
\rho^{k+1} = 
\begin{cases}
\tau^{\rm incr} \rho^k & r_{\rm p} > \theta r_{\rm d}, \\
\rho^k / \tau^{\rm decr}& r_{\rm d} > \theta r_{\rm p}, \\
\rho^k & \mbox{otherwise}.
\end{cases}
\end{equation}
We use this update with $\tau^{\rm incr} = \tau^{\rm decr} = 2$ and $\theta = 10$ for both Centering ADMM and the Standard ADMM. 

If the residual values of the primal and dual problems become smaller than $0.1$ at iteration $k$, we consider that the point $(X^k, Y^k, Z^k)$ is sufficiently close to the central path, and update the barrier parameter $\mu^k$ by $\mu^{k+1} = 0.75 \mu^k$, where the ratio $0.75$ was determined from experience.

If the barrier parameter $\mu^k$ is sufficiently small, e.g., $\mu^ {k} < 10^{-3}$, we consider that the point $(X^k, Y^k, Z^k)$ is sufficiently close to the set of optimal solutions.
In that case, the centering effect of the barrier function is not needed, and we can switch to the Standard ADMM instead of Centering ADMM.

The full description of Centering ADMM is in Algorithm \ref{algo:CenteringADMM}. 
 \begin{algorithm}
  \caption{Centering ADMM}
  \label{algo:CenteringADMM}
 \begin{algorithmic}[1]
 \renewcommand{\algorithmicrequire}{\textbf{Input:}}
 \renewcommand{\algorithmicensure}{\textbf{Output:}}
 \STATE initialization
 \\ \textit{$Y^0,Z^0,\mu^0,\rho^0$}
 \WHILE{$\mu^k < 10^{-3}$}
 \STATE Compute $R^{k+1} =  \mbox{argmin}_{R} L_{\rho^k}^{\rm BQAP} (R,Y^k,Z^k)$ by  (\ref{eq:CADMM-R2})
 \STATE Compute $Y^{k+1}$ by  (\ref{eq:ADMM-Y2})
 \STATE Compute $Z^{k+1}$ by  (\ref{eq:ADMM-Z2}) 
 \STATE $r_{\rm p} = \|Y^{k+1} - \hat{V}R^{k+1}\hat{V}^T \|_F$
 \STATE $r_{\rm d} = \|\rho^k \hat{V}^T (Y^k - Y^{k+1})\hat{V}\|_F$
 \IF {$r_{\rm p} > 10 r_{\rm d}$}
 \STATE $\rho^{k+1} =  2\rho^k$
 \ELSE 
 \IF {$r_{\rm d} > 10 r_{\rm p}$}
 \STATE $\rho^{k+1} = \rho^k/2 $ 
 \ELSE
 \STATE $\rho^{k+1} = \rho^k$
 \ENDIF
 \ENDIF
 \STATE $r = \mbox{max} \left( r_{\rm p}, r_{\rm d} \right)$
 \IF {$r < 0.1$}
 \STATE $\mu^{k+1} = 0.75\mu^k$
 \ELSE
 \STATE $\mu^{k+1} = \mu^k$
 \ENDIF
 \ENDWHILE
 \STATE Start the Standard ADMM with the initial point $(Y^k,Z^k)$
 \end{algorithmic} 
 \end{algorithm}

\section{Global convergence of Centering ADMM}
\label{sec:global convergence}

In section 3.2 of \cite{aBOYD2011}, it has been shown that ADMM (with a fixed penalty parameter $\rho > 0$) in section 2 has global convergence properties if the following  assumptions hold (see also Appendix A of  \cite{aBOYD2011}):
\begin{description}
\item[Assumption 1.]
The (extended-real-valued) functions $f: \mathbb{R}^n \rightarrow \mathbb{R} \cup \{+\infty\}$ and  $g: \mathbb{R}^n \rightarrow \mathbb{R} \cup \{+\infty\}$ are closed, proper, and convex.
\item[Assumption 2.]
The unaugmented Lagrangian $L_0$ (the augmented Lagrangian $L_{\rho}$ with $\rho =0$) has a saddle point.
\end{description}
Centering ADMM is an ADMM for problem $\mbox{BQAP}_{\rm R3}$  (\ref{eq:BQAP_R3}), 
and the functions $f$ and $g$ are given by
\begin{equation}
\notag
f(R) := - \mu\mbox{log}(\mbox{det}(R)), \ 
g(Y) := \langle L_Q , Y \rangle + \mathcal{I}(Y),
\end{equation}
where $\mu > 0$ and $L_Q$ is given by (\ref{eq:L_Q}),  and the augmented Lagrangian function $L_{\rho}^{\rm BQAP} (R,Y,Z)$ is defined by (\ref{eq:BQAP-function}). 
From the definition (\ref{eq:I}) of $\mathcal{I}(Y)$, $\mathcal{I}(Y)$ is the indicator function for an affine space,  and hence, we can see that Assumption 1 holds for Centering ADMM.
Thus, if Assumption 2 holds, i.e., the unaugmented Lagrangian function $L_{0}^{\rm BQAP} (R,Y,Z)$  has a saddle point, then Centering ADMM (with no update of $\rho$ , i.e., $\rho^k = \rho > 0$ for every $k=0,1,\ldots$) has global convergence properties.
In what follows, we show that the function $L_{0}^{\rm BQAP} (R,Y,Z)$ has a saddle point (Proposition \ref{prop:saddle point}). 

The Karush-Khun-Tucker conditions of  $\mbox{BQAP}_{\rm R2}$  (\ref{eq:BQAP_R2}) turn out to be
\begin{eqnarray}
& & \hat{V}^T(L_Q + A)\hat{V}R = \mu I, \notag \\
& & \mathcal{G}_J (\hat{V}R\hat{V}^T) = E_{00}, \notag \\
& &  A \in \mathcal{S}^{n^2+1}_J,  \notag \\
& & R \succeq O, \notag \\
& & \hat{V}^T(L_Q + A)\hat{V} \succeq O, \notag
\end{eqnarray}
where
\begin{equation}
\label{eq:S_J}
\mathcal{S}^{n^2+1}_J= \{  X \in \mathcal{S}^{n^2+1} \mid \mathcal{G}_J (X) = X  \},
\end{equation}
and by setting $A = -(L_Q + Z)$, we obtain the following system:
\begin{eqnarray}
& & (-\hat{V}^TZ\hat{V})R = \mu I \label{eq:KKT1}, \\
& & \mathcal{G}_J (\hat{V}R\hat{V}^T) = E_{00} \label{eq:KKT2}, \\
& & -(L_Q + Z) \in \mathcal{S}^{n^2+1}_J,  \label{eq:KKT3} \\
& & R \succeq 0, \label{eq:KKT4} \\
& & -\hat{V}^TZ\hat{V} \succeq O. \label{eq:KKT5}
\end{eqnarray}
For any fixed $\mu > 0$, the objective function of problem $\mbox{BQAP}_{\rm R2}$ is strictly convex, and hence the above system has a unique solution.
Proposition \ref{prop:saddle point} guarantees that the solution is a saddle point of the function $L_{0}^{\rm BQAP} (R,Y,Z)$.

\begin{proposition}
\label{prop:saddle point}
For any fixed $\mu > 0$, the unique solution $(R^*,Z^*)$ of the system (\ref{eq:KKT1})-(\ref{eq:KKT5}) satisfies
\begin{equation}
\notag
\max_{Z} L_0^{\rm BQAP} (R^*,Y^*,Z) = L_0^{\rm BQAP} (R^*,Y^*,Z^*) = \min_{R,Y} L_0^{\rm BQAP} (R,Y,Z^*), 
\end{equation}
where $Y^* = \hat{V}R^*\hat{V}^T$.
That is,  $(R^*,Y^*, Z^*)$ is a saddle point of  the function $L_{0}^{\rm BQAP} (R,Y,Z)$.
\end{proposition}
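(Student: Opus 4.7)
The plan is to verify the two saddle-point equalities separately, exploiting the affine dependence of $L_0^{\rm BQAP}$ on $Z$ and the separability of the Lagrangian in $(R,Y)$ once $Z$ is fixed at $Z^*$, and then to match each resulting first-order condition against one of (\ref{eq:KKT1})--(\ref{eq:KKT5}).

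For the first equality, $\max_Z L_0^{\rm BQAP}(R^*,Y^*,Z)=L_0^{\rm BQAP}(R^*,Y^*,Z^*)$, the only occurrence of $Z$ in $L_0^{\rm BQAP}$ is the linear term $\langle Z,\,Y-\hat V R\hat V^T\rangle$, which vanishes at $(R^*,Y^*)$ by the very definition $Y^*=\hat V R^*\hat V^T$. Hence $Z\mapsto L_0^{\rm BQAP}(R^*,Y^*,Z)$ is constant and its maximum is trivially attained at any $Z$, in particular at $Z^*$.

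For the second equality, $L_0^{\rm BQAP}(R^*,Y^*,Z^*)=\min_{R,Y}L_0^{\rm BQAP}(R,Y,Z^*)$, I would rewrite the Lagrangian at $Z=Z^*$ in the separated form
\begin{equation*}
L_0^{\rm BQAP}(R,Y,Z^*) = \bigl[-\mu\log\det R - \langle \hat V^T Z^*\hat V,\,R\rangle\bigr] + \bigl[\langle L_Q+Z^*,\,Y\rangle + \mathcal{I}(Y)\bigr],
\end{equation*}
so that it suffices to minimize each bracket independently. The $R$-bracket is strictly convex and coercive on $\mathcal{S}^n_{++}$: (\ref{eq:KKT5}) gives $-\hat V^T Z^*\hat V\succeq O$, and (\ref{eq:KKT1}) forces it to be invertible, hence positive definite, so the linear term is convex while the log-det barrier is strictly convex. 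Its unique stationary point is characterized by $\mu R^{-1}=-\hat V^T Z^*\hat V$, i.e., $(-\hat V^T Z^*\hat V)R=\mu I$, which is exactly (\ref{eq:KKT1}), so $R^*$ is the minimizer. The $Y$-bracket amounts to minimizing the linear form $\langle L_Q+Z^*,\,Y\rangle$ over the affine set $\{Y:\mathcal{G}_J(Y)=E_{00}\}$. Splitting the inner product between indices in $J$ (where the entries of $Y$ are pinned) and in $J^C$ (where they are free) reduces this to a linear function of the free entries; for the infimum to be finite the coefficients on $J^C$ must vanish, which is precisely $-(L_Q+Z^*)\in\mathcal{S}^{n^2+1}_J$, i.e., (\ref{eq:KKT3}). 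When that holds the objective is constant on the feasible set, and $Y^*$ is feasible by (\ref{eq:KKT2}), hence is a minimizer.

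The main obstacle I anticipate is the domain bookkeeping implicitly carried by the log-det barrier: one must observe that (\ref{eq:KKT1}) together with (\ref{eq:KKT5}) upgrades $-\hat V^T Z^*\hat V$ from merely positive semidefinite to strictly positive definite, which is what ensures the $R$-subproblem has an interior minimizer in $\mathcal{S}^n_{++}$ and makes the vanishing-gradient condition both necessary and sufficient. Once that observation is secured, the remaining argument is simply a matching of first-order conditions to the KKT system.
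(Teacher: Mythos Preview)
Your proposal is correct and follows essentially the same approach as the paper: both verify the first equality by observing that the $Z$-dependence disappears because $Y^*=\hat V R^*\hat V^T$, and both handle the second equality by separating $L_0^{\rm BQAP}(R,Y,Z^*)$ into an $R$-part (whose minimizer is identified via strict convexity and the stationarity condition (\ref{eq:KKT1})) and a $Y$-part (whose minimum over $\mathcal{Y}$ is shown to be constant using (\ref{eq:KKT3}) and attained at $Y^*$ by (\ref{eq:KKT2})). Your explicit observation that (\ref{eq:KKT1}) together with (\ref{eq:KKT5}) forces $-\hat V^T Z^*\hat V\succ O$, hence coercivity of the $R$-subproblem, is a detail the paper leaves implicit but is indeed the right way to close that gap.
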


\begin{proof}
From the definition (\ref{eq:I}) of $\mathcal{I}(Y)$, (\ref{eq:KKT2}) and setting $Y^* =\hat{V}R^*\hat{V}^T$, we can easily see that 
\begin{eqnarray*}
\max_{Z} L_0^{\rm BQAP} (R^*,Y^*,Z) 
& = & \max_{Z} \left\{ \langle L_Q , Y^* \rangle + \mathcal{I}(Y^*)  - \mu\mbox{log}(\mbox{det}(R^*)) + \langle Z , (Y^* - \hat{V} R^* \hat{V}^T) \rangle  \right\} \\
& = & \max_{Z} \left \{ \langle L_Q , Y^* \rangle  - \mu\mbox{log}(\mbox{det}(R^*)) \right \} \\
& = & L_0^{\rm BQAP} (R^*,Y^*,Z^*).
\end{eqnarray*}
Let us show that $L_0^{\rm BQAP} (R^*,Y^*,Z^*) = \min_{R,Y} L_0^{\rm BQAP} (R,Y,Z^*)$ holds.
Before doing so, we define the set $\mathcal{Y}$ as follows:
\begin{equation}
\label{eq:Y}
\mathcal{Y} := \{ Y \in \mathcal{S}^{n^2+1} \mid \mathcal{G}_J (Y) = E_{00}  \} .
\end{equation}
Then, from the definition (\ref{eq:I}) of $\mathcal{I}(Y)$, we see that
\begin{eqnarray}
\notag
\min_{R,Y}L_0^{\rm BQAP} (R,Y,Z^*) 
\notag
& = & \min_{R,Y} \left\{  \langle L_Q , Y \rangle + \mathcal{I}(Y)  - \mu\mbox{log}(\mbox{det}(R)) + \langle Z^* , (Y - \hat{V} R \hat{V}^T) \rangle \right\} \\
\notag
& = & \min_{R,Y}  \left\{  \langle L_Q + Z^*  , Y \rangle + \mathcal{I}(Y)  - \mu\mbox{log}(\mbox{det}(R)) + \langle Z^* , - \hat{V} R \hat{V}^T \rangle \right\} \\
\notag
& = & \min_{Y} \left\{  \langle L_Q + Z^*  , Y \rangle + \mathcal{I}(Y) \right\} + \min_{R} \left \{ - \mu\mbox{log}(\mbox{det}(R)) + \langle Z^* , - \hat{V} R \hat{V}^T \rangle \right\} \\
\label{eq:BQAP2}
& = & \min_{Y \in \mathcal{Y}} \left\{  \langle L_Q + Z^*  , Y \rangle  \right\} + \min_{R} \left \{ - \mu\mbox{log}(\mbox{det}(R)) + \langle Z^* , - \hat{V} R \hat{V}^T \rangle \right\}.
\end{eqnarray}

Since $Z^*$ satisfiers  (\ref{eq:KKT3}), the definition (\ref{eq:S_J}) of $\mathcal{S}^{n^2+1}_J$, the definition (\ref{eq:G_J}) of $\mathcal{G}_J(\cdot)$, and the definition (\ref{eq:Y}) of $\mathcal{Y}$ imply that 
\begin{eqnarray}
\notag
\min_{Y \in \mathcal{Y}} \left\{  \langle L_Q + Z^*  , Y \rangle  \right\} 
\notag
& = & \min_{Y \in \mathcal{Y}} \left\{  \langle \mathcal{G}_J(L_Q + Z^*)  , Y \rangle  \right\} \\
\notag
& = & \min_{Y \in \mathcal{Y}} \left\{  \langle L_Q + Z^*  , \mathcal{G}_J(Y) \rangle  \right\} \\
\notag
& = & \min_{Y \in \mathcal{Y}} \left\{  \langle L_Q + Z^*  , E_{00} \rangle  \right\} \\
\label{eq:BQAP3}
& = & \langle L_Q + Z^*  , E_{00} \rangle.
\end{eqnarray}

For any fixed $\mu > 0$, the function $- \mu\mbox{log}(\mbox{det}(R)) + \langle Z^* , - \hat{V} R \hat{V}^T \rangle$ is strictly convex at any $R \succ O$,  and the second term of (\ref{eq:BQAP2}) has a unique minimum solution $R$ satisfying
\begin{equation}
- \hat{V}^T Z^* \hat{V} - \mu R^{-1} = 0. \notag
\end{equation}
Thus, the fact that $R^*$ satisfies (\ref{eq:KKT1}) implies that $R^*$ is the minimum solution of the second term of (\ref{eq:BQAP2}) and we have
\begin{equation}
\label{eq:BQAP4}
 \min_{R} \left \{ - \mu\mbox{log}(\mbox{det}(R)) + \langle Z^* , - \hat{V} R \hat{V}^T \rangle \right\}
 = - \mu\mbox{log}(\mbox{det}(R^*)) + \langle Z^* , - \hat{V} R^* \hat{V}^T \rangle.
\end{equation}
Equations (\ref{eq:BQAP2}), (\ref{eq:BQAP3}), and (\ref{eq:BQAP4}) imply that
 \begin{eqnarray}
 \notag
\min_{R,Y}L_0^{\rm BQAP} (R,Y,Z^*) 
\notag
& = & \min_{Y \in \mathcal{Y}} \left\{  \langle L_Q + Z^*  , Y \rangle  \right\} + \min_{R} \left \{ - \mu\mbox{log}(\mbox{det}(R)) + \langle Z^* , - \hat{V} R \hat{V}^T \rangle \right\}  \\
\label{eq:BQAP5}
& = & \langle L_Q + Z^*  , E_{00} \rangle - \mu\mbox{log}(\mbox{det}(R^*)) + \langle Z^* , - \hat{V} R^* \hat{V}^T\rangle. 
\end{eqnarray}
By a discussion similar to derive (\ref{eq:BQAP3}), we also see that 
\begin{eqnarray}
\notag
L_0^{\rm BQAP} (R^*,Y^*,Z^*) 
\notag
& = & \langle L_Q , Y^* \rangle + \mathcal{I}(Y^*)  - \mu\mbox{log}(\mbox{det}(R^*)) + \langle Z^* , (Y^* - \hat{V} R^* \hat{V}^T) \rangle \\
\notag
& =  & \langle L_Q + Z^* , Y^* \rangle  - \mu\mbox{log}(\mbox{det}(R^*)) + \langle Z^* , - \hat{V} R^* \hat{V}^T \rangle \\
\label{eq:BQAP6}
& = & \langle L_Q + Z^* , E_{00} \rangle  - \mu\mbox{log}(\mbox{det}(R^*)) + \langle Z^* , - \hat{V} R^* \hat{V}^T \rangle.
\end{eqnarray}
Therefore,  (\ref{eq:BQAP5}) and  (\ref{eq:BQAP6}) guarantee that $L_0^{\rm BQAP} (R^*,Y^*,Z^*) = \min_{R,Y} L_0^{\rm BQAP} (R,Y,Z^*)$ holds.
\end{proof}

\section{Numerical experiments}
\label{sec: Numerical experiments}

We conducted numerical experiments to examine the performance of Centering ADMM in comparison with the Standard ADMM on the QAPLIB instances with symmetric matrices in \cite{aBURKARD1997,QAPLIB}.
We used MATLAB R2018b on an Intel (R) Core (TM) i7-6700 CPU @ 3.40GHz 3.41GHz machine. 
For the sake of limiting the computational time, we only dealt with instances of size $n \leq 40$.

We set the initial points and the accuracy parameters of Centering ADMM and the Standard ADMM, as 
\begin{equation}
\notag
Y^0 = I, \  Z^0 = -I, \ \mu^0 =1, \ \rho^0 = n, \ \epsilon_{\rm r} = 0.1
\end{equation}
for all instances.

For both methods, we limited the number of iterations to 10000 and outputted the obtained lower bounds  every 100 iterations.
Figures \ref{fig:chr} -- \ref{fig:Tai-a} are plots of the difference between the lower bounds obtained by the two methods, i.e., (the value of the lower bound obtained by Centering ADMM) -  (the value of the lower bound obtained by the Standard ADMM) every 100 iterations for each class of instances.

The horizontal axis shows the number of iterations, and the vertical axis shows the difference between the obtained lower bounds. 
If the difference is positive (negative), it means that Centering ADMM (the Standard ADMM) computes a better lower bound. 

The results allow us to make the following observations for each class of instances.

\subsection{Observations for the class of ``chr'' instances (Figure \ref{fig:chr}) }

Except for instance ``hr25a,'' the difference is positive when the number of iterations becomes larger than 1000, which implies that Centering ADMM obtains a better lower bound than the Standard ADMM at each iteration for most  instances of this class.

\subsection{Observations for the class of ``Had'' instances (Figure \ref{fig:Had}) }

For ``Had'' instances, a significant increase in the lower bound occurs during the first few iterations, and the Standard ADMM obtains better results than Centering ADMM for every instance. 

Figure \ref {fig:Had12_difference} shows the results of solving ``Had12'' by using the Standard ADMM as the difference between the lower bound and the optimal value every 100 iterations. 
We can see that the lower bound is sufficiently close to the optimal value at 300 iterations.

Table \ref {tb:tableHad} compares the results obtained by the Standard ADMM and those obtained by Centering ADMM. 
The table lists the problem name (Prob.), its optimal value (Opt.), and for each ADMM, the pair of the lower bound (LB) and the number of iterations (\#Iter) for which the difference from the optimum value becomes less than or equal to 0.5. 

In every case, the lower bound is sufficiently close to the optimal value within 2000 iterations, and this suggests that the centering effect is not required.

\subsection{Observations for the class of ``Kra'' instances (Figure \ref{fig:Kra}) }

The difference is positive once the number of iterations becomes larger than 1100, which implies that Centering ADMM obtains a better lower bound than the Standard ADMM at each iteration for all instances of this class.

\subsection{Observations for the classes of ``Rou'' and ``Scr''  instances (Figure \ref{fig:Rou}) }

The difference is always positive, which implies that Centering ADMM obtains a better lower bound than the Standard ADMM at each iteration for all instances of these classes.

\subsection{Observations for the class of ``Nug'' instances (Figure \ref{fig:Nug}) }

Similary to the results for the ``Had'' instances, a significant increase in the lower bound occurs at the beginning of the iterations for the ``Nug'' instances, and the Standard ADMM obtains better results than Centering ADMM for every instance. 
A difference from the results for the ``Had'' instances is that the upper bound is not attained at 10,000 iterations. 
At an early stage, the lower bound increases rapidly to a certain value, but after that, the increase becomes quite small.

Table \ref {tb:tableNug1} lists the differences between the lower bounds obtained by the Standard ADMM and by  Centering ADMM. 
The table shows the problem name (Prob.), the value at which the increase in the ratio of the differences starts to slow down (Slow Down LB), the number of iterations at which the difference becomes less than or equal to 1 (Small Diff. \#Iter.), and the lower bound obtained by the Standard ADMM at 10000 iterations (LB at 10000 Iter.). 
We omit the lower bound obtained by Centering ADMM since it is quite close to the value of ``LB at 10000 Iter.''

Table \ref {tb:tableNug2} compares the results obtained by the Standard ADMM and by Centering ADMM. 
The table shows the problem name (Prob.), its optimal value (Opt.), and for each ADMM, the pair of the lower bound (LB) and the number of iterations for which the increase in the ratio of the lower bound values starts to slow down (\#Iter.). 

Similarly to the results for the the ``Had'' instances, the lower bound is sufficiently close to the optimal value within 3000 iterations, and this this suggests that the centering effect is not required in any of the cases.

\subsection{Observations for the class of ``Tai-a''  instances (Figure \ref{fig:Tai-a}) }

Except for instance ``Tai35a,'' Centering ADMM obtains a better lower bound than the Standard ADMM at almost every iteration for all instances of this class.

\subsection{Observations for the classes of ``Els19'' and ``Tho30'' instances (Figure \ref{fig:ElsandTho}) }

For instance ``Els19,'' Centering ADMM obtains better lower bounds than the Standard ADMM. 
For instance ``Tho30,'' the difference is positive at almost every iterations.
Thus, Centering ADMM is better than the Standard ADMM for this instance as well.

\section{Concluding remarks}
\label{sec: Concluding}

We devised a new method for solving a semidefinite (SD) relaxation of the quadratic assignment problem (QAP), called Centering ADMM. 
Centering ADMM is an alternating direction method of multipliers (ADMM) combining the centering steps used in the interior-point method.
The first stage of Centering ADMM updates the iterate such that it approaches the central path by incorporating a barrier function term in the objective function, as in the interior-point method. 
If the current iterate is sufficiently close to the central path with a sufficiently small value of the barrier parameter, the method then proceeds to the Standard ADMM.
We showed that Centering ADMM  (not employing a dynamic update of the penalty parameter) has global convergence properties.
To observe the effect of the centering steps, we conducted numerical experiments with SD relaxation problems of the instances in QAPLIB \cite{QAPLIB}.
The results demonstrate that Centering ADMM is quite efficient for some instances, e.g.,  all instances  in ``chr,'' ``Kra,'' ``Rou,'' and ``Scr, '' and instances ``Els19'' and ``Tho30.''

Our future research will include further discussions on convergence of Centering ADMM,  providing a way of determining valid initial parameters, and an extension of the method to general semidefinite programs.

\begin{figure}[H]
\begin{tabular}{ccc}
\begin{minipage}{0.3\columnwidth}
\centering
\includegraphics[width = 50mm,pagebox=cropbox,clip]{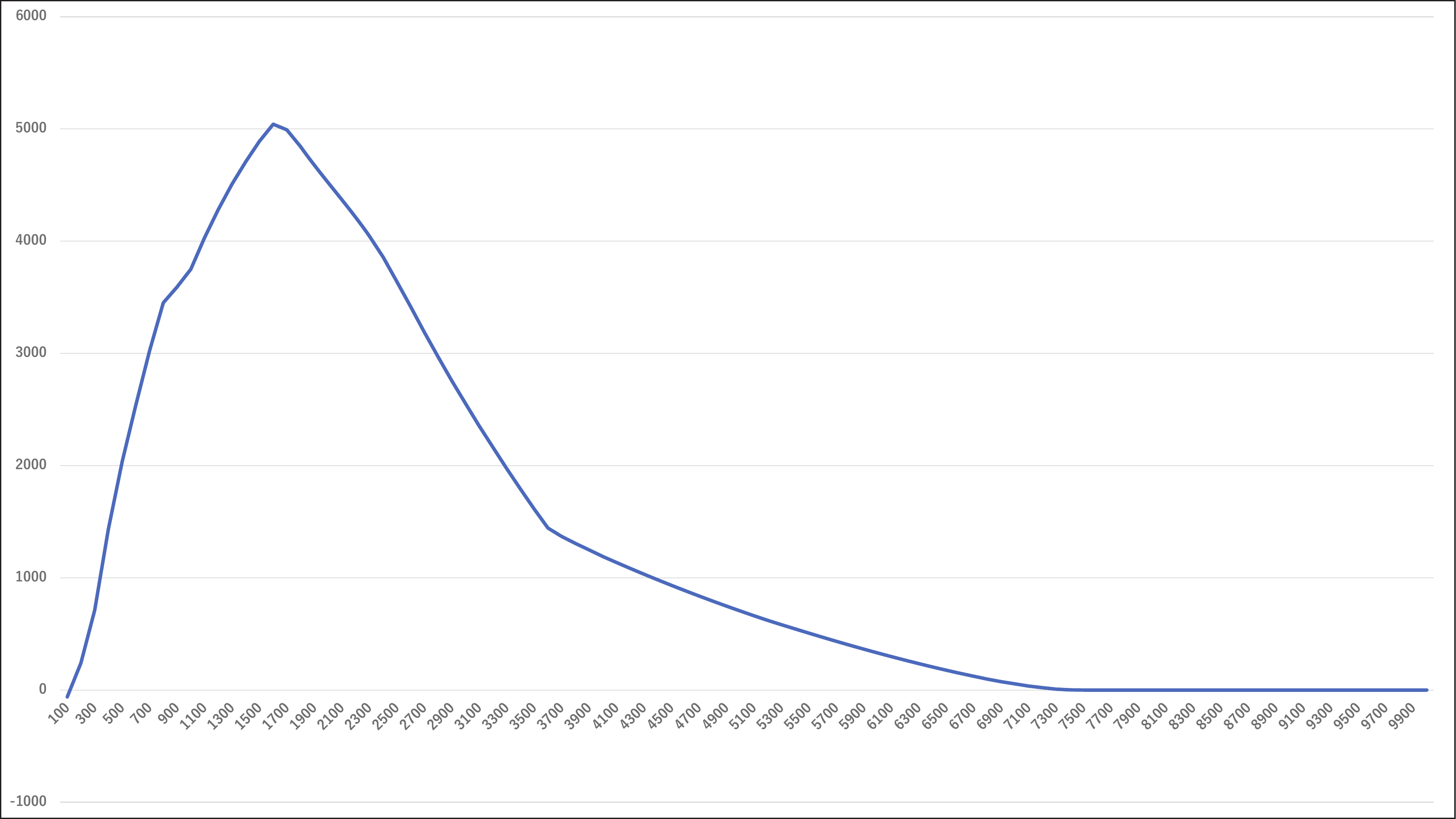}
\subcaption{chr12a}
\label{fig:chr12a}
\end{minipage} &
\begin{minipage}{0.3\columnwidth}
\centering
\includegraphics[width = 50mm,pagebox=cropbox,clip]{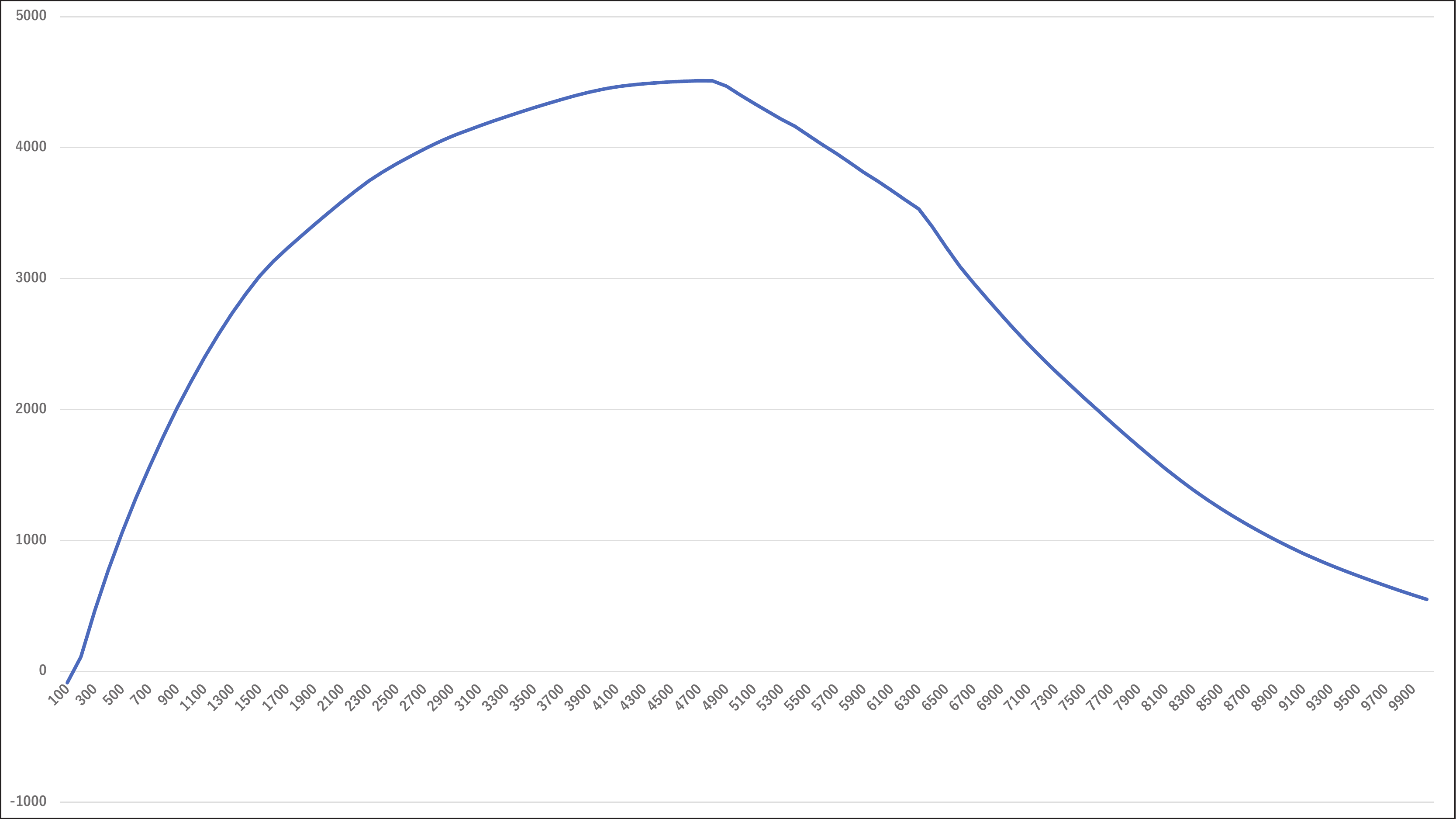}
\subcaption{chr15a}
\label{fig:chr15a}
\end{minipage} &
\begin{minipage}{0.3\columnwidth}
\centering
\includegraphics[width = 50mm,pagebox=cropbox,clip]{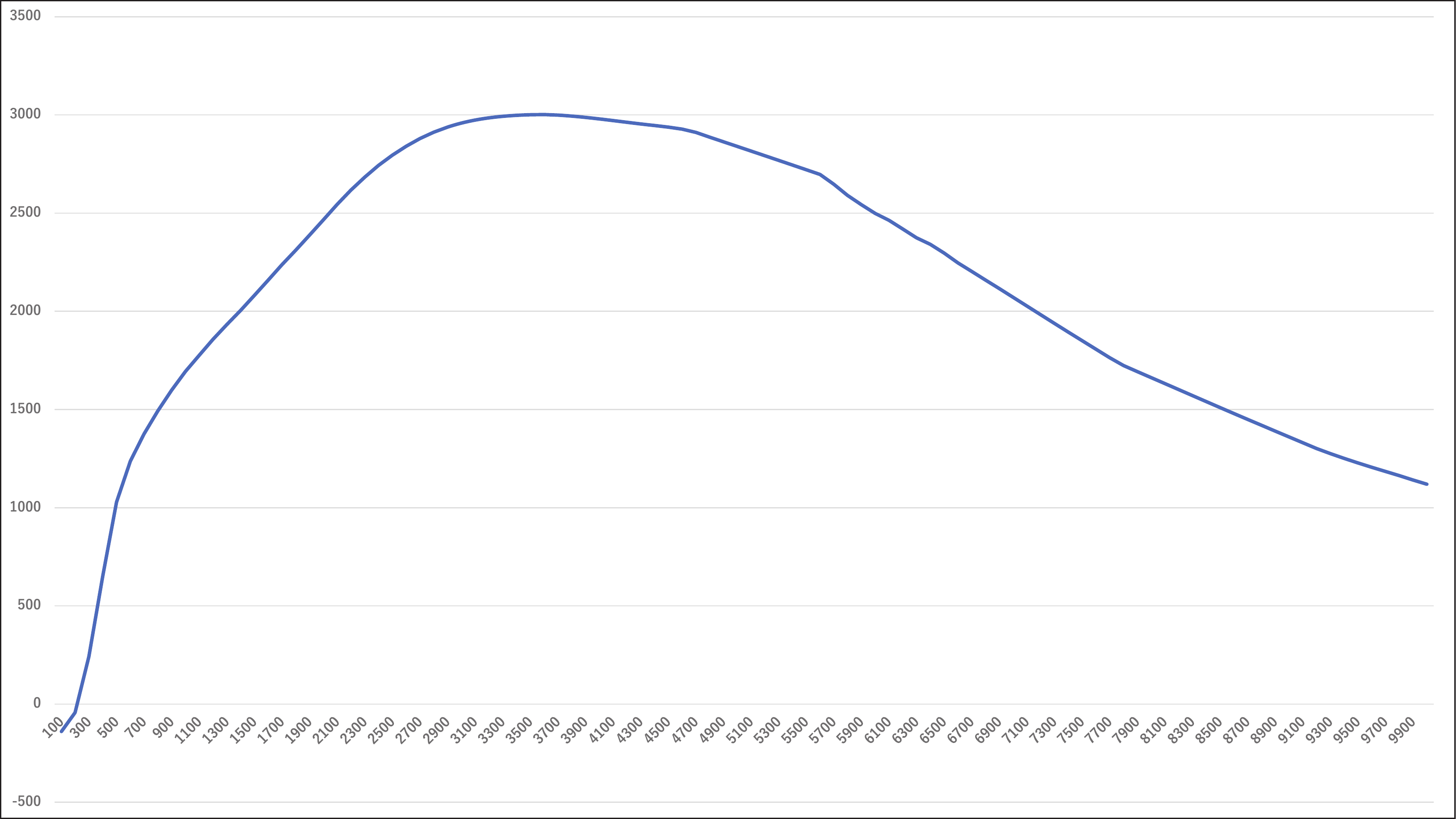}
\subcaption{chr18a}
\label{fig:chr18a}
\end{minipage} \\
\begin{minipage}{0.3\columnwidth}
\centering
\includegraphics[width = 50mm,pagebox=cropbox,clip]{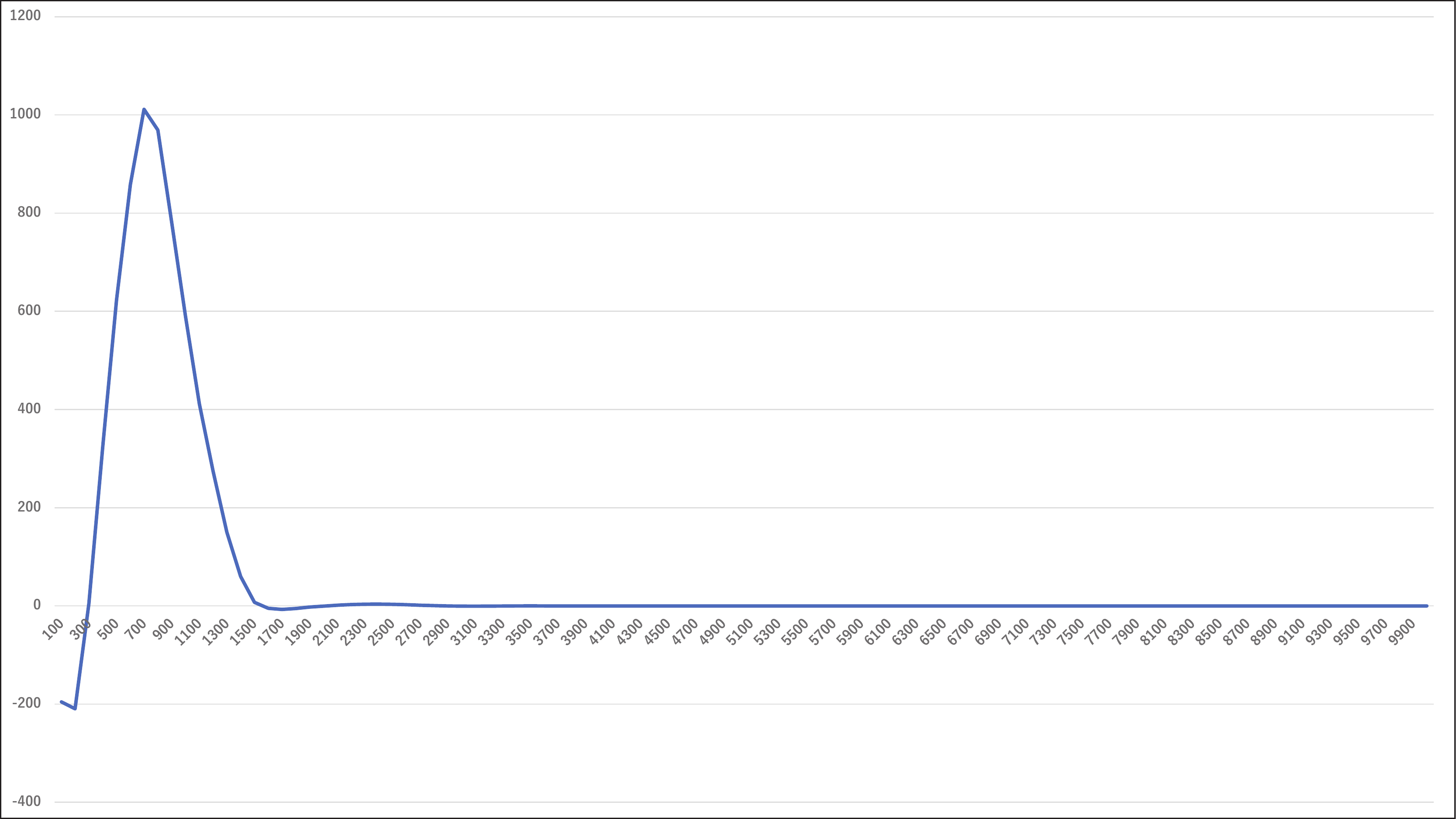}
\subcaption{chr20a}
\label{fig:chr20a}
\end{minipage} &
\begin{minipage}{0.3\columnwidth}
\centering
\includegraphics[width = 50mm,pagebox=cropbox,clip]{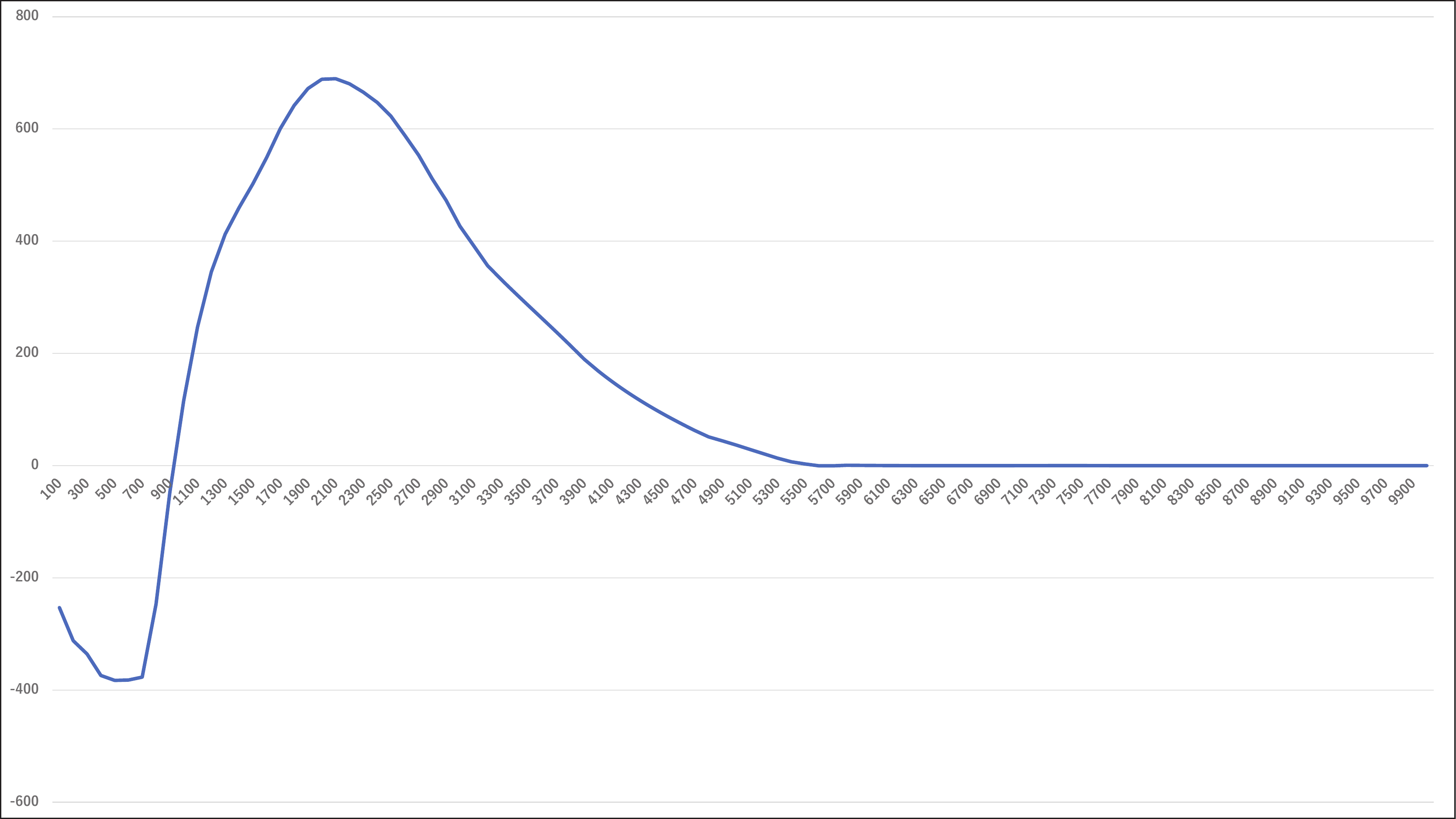}
\subcaption{chr22a}
\label{fig:chr22a}
\end{minipage} &
\begin{minipage}{0.3\columnwidth}
\centering
\includegraphics[width = 50mm,pagebox=cropbox,clip]{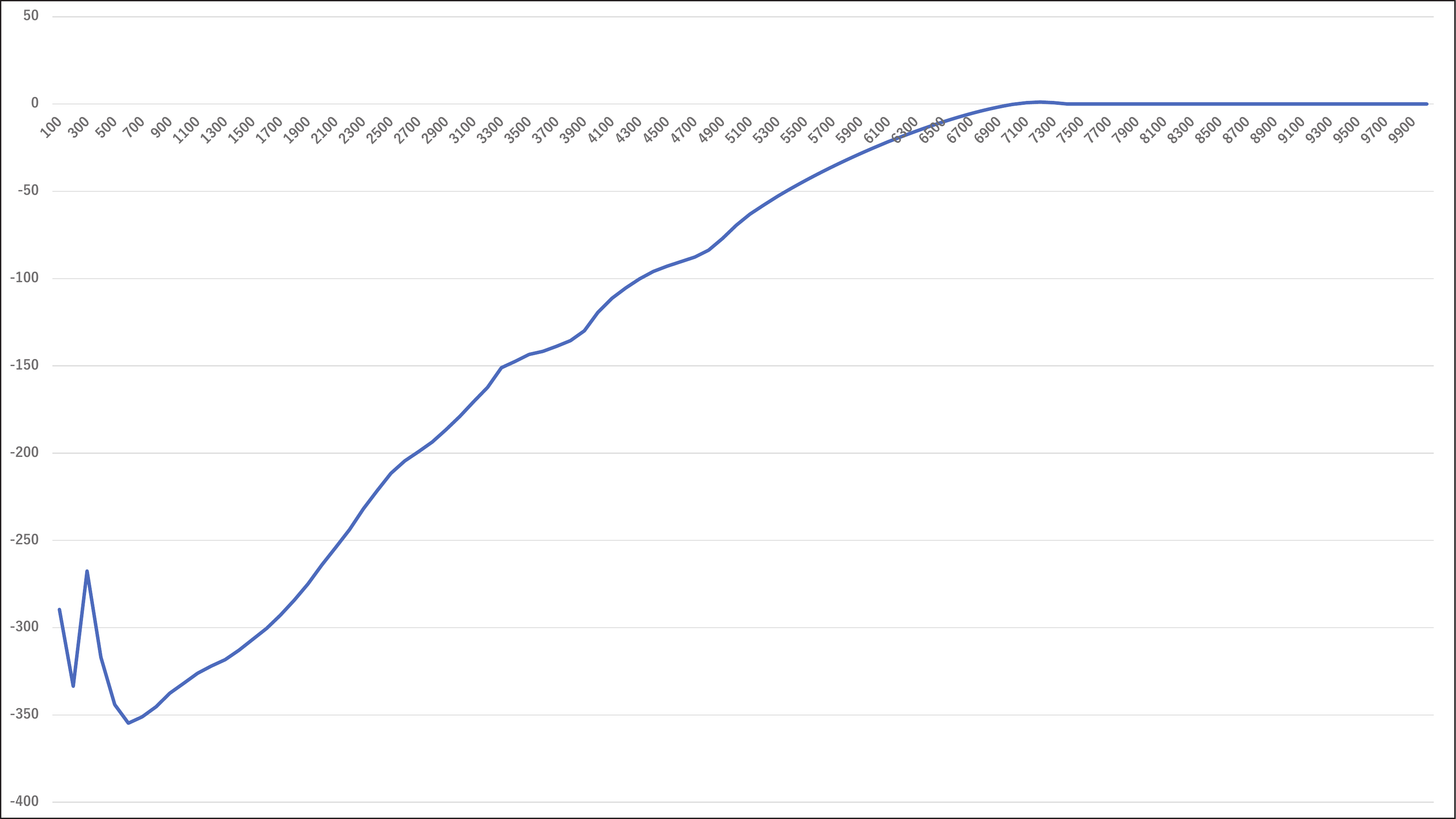}
\subcaption{chr25a}
\label{fig:chr25a}
\end{minipage} \\
\begin{minipage}{0.3\columnwidth}
\centering
\includegraphics[width = 50mm,pagebox=cropbox,clip]{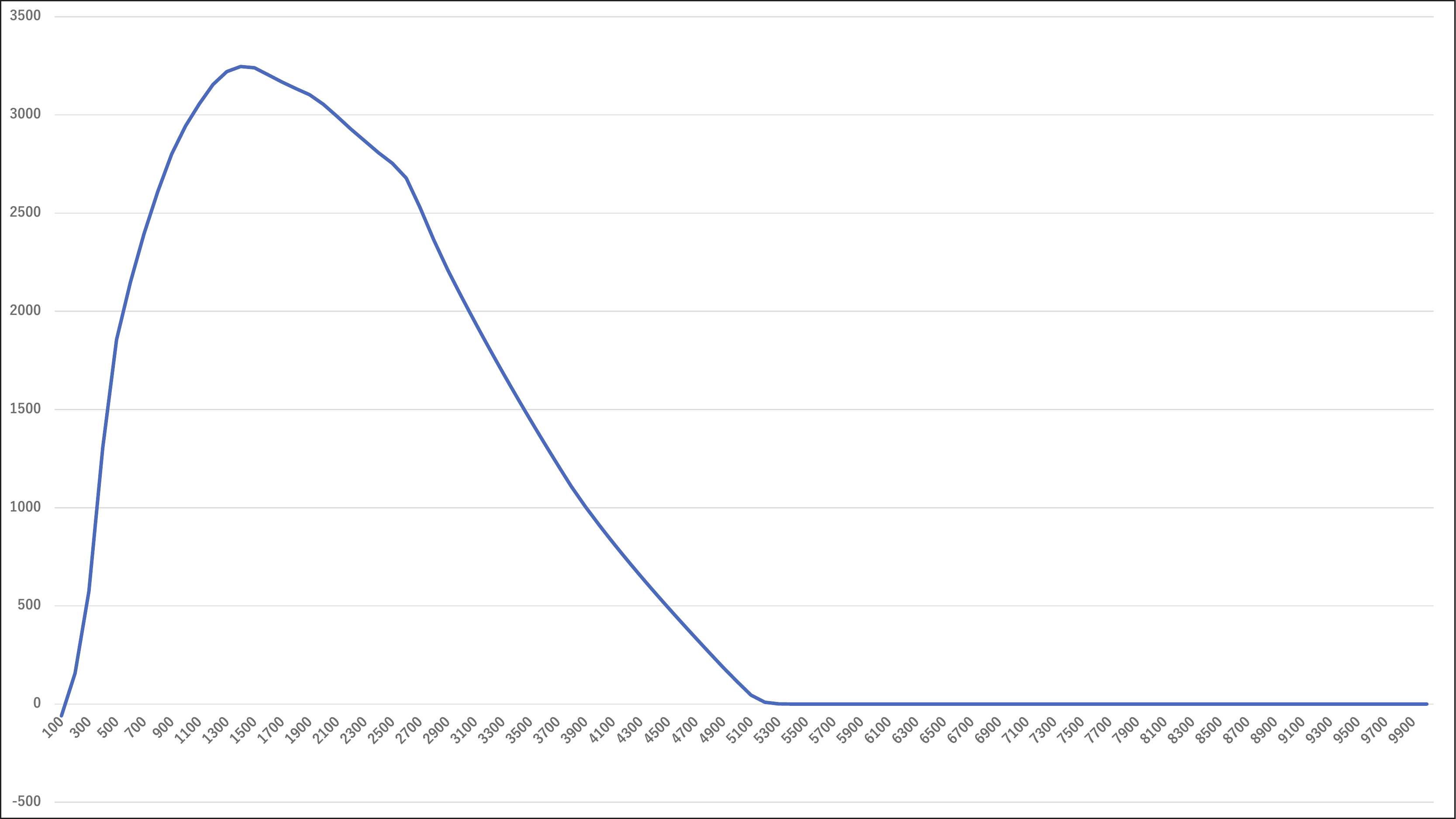}
\subcaption{chr12b}
\label{fig:chr12b}
\end{minipage} &
\begin{minipage}{0.3\columnwidth}
\centering
\includegraphics[width = 50mm,pagebox=cropbox,clip]{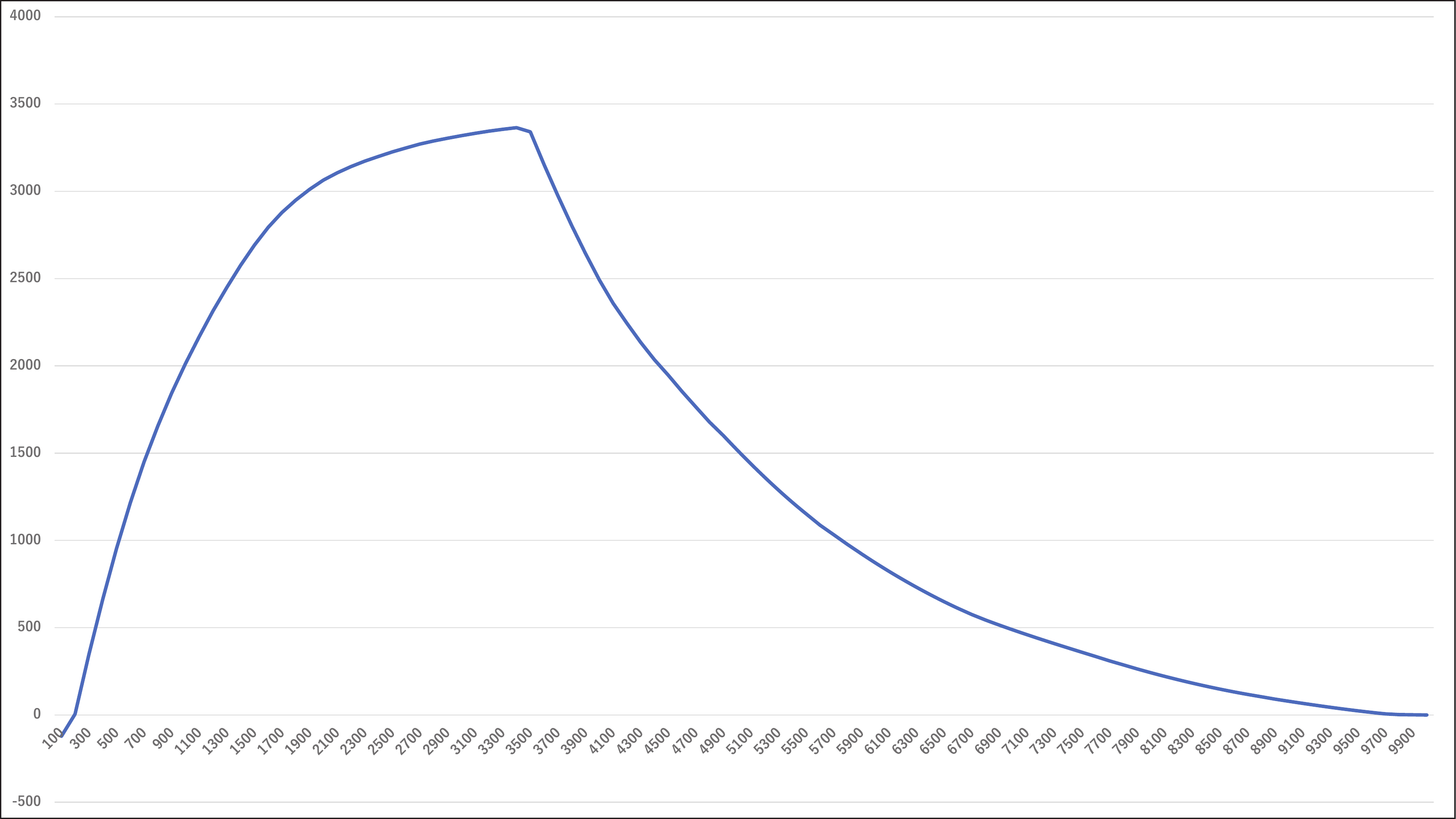}
\subcaption{chr15b}
\label{fig:chr15b}
\end{minipage} &
\begin{minipage}{0.3\columnwidth}
\centering
\includegraphics[width = 50mm,pagebox=cropbox,clip]{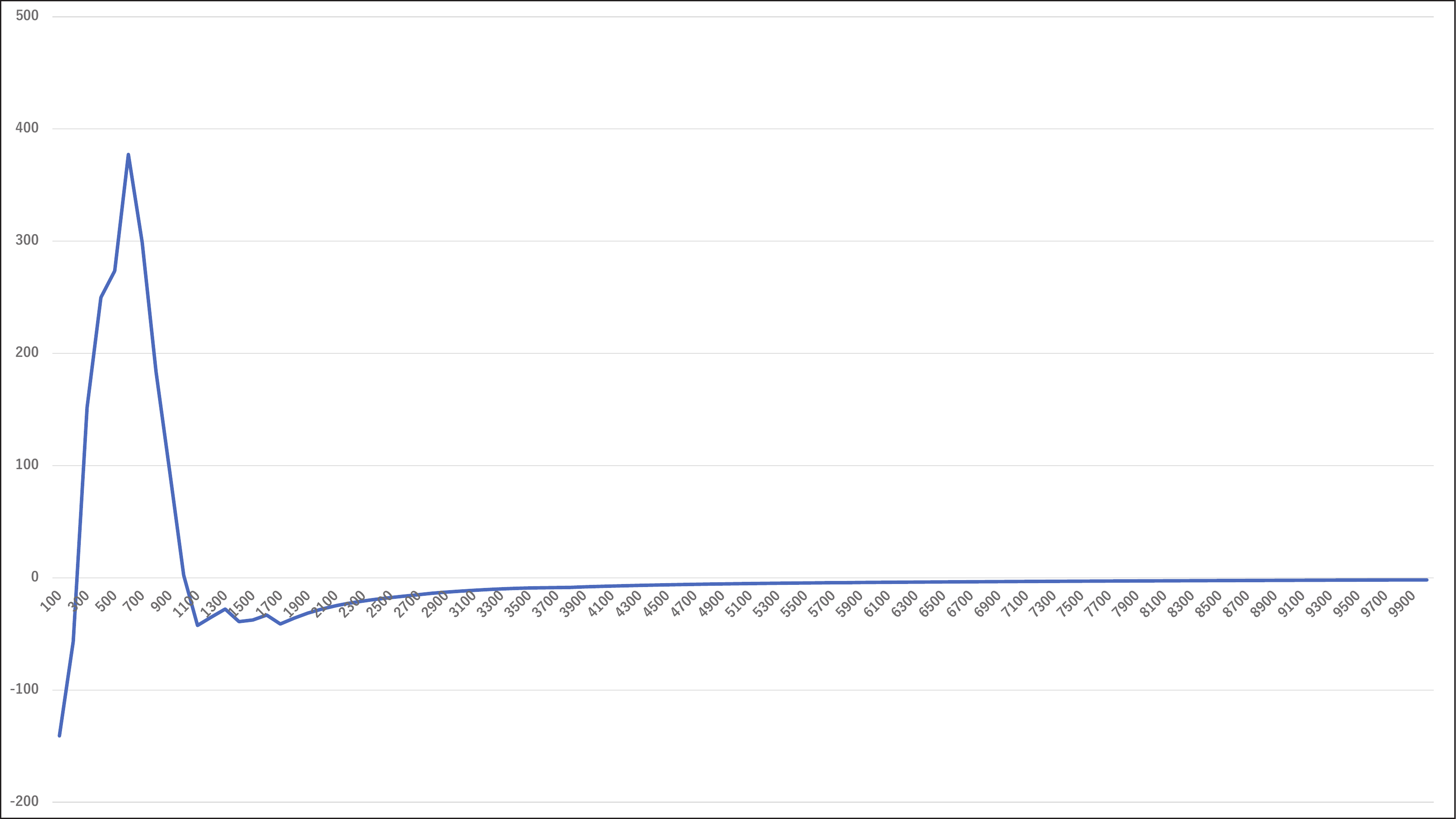}
\subcaption{chr18b}
\label{fig:chr18b}
\end{minipage} \\
\begin{minipage}{0.3\columnwidth}
\centering
\includegraphics[width = 50mm,pagebox=cropbox,clip]{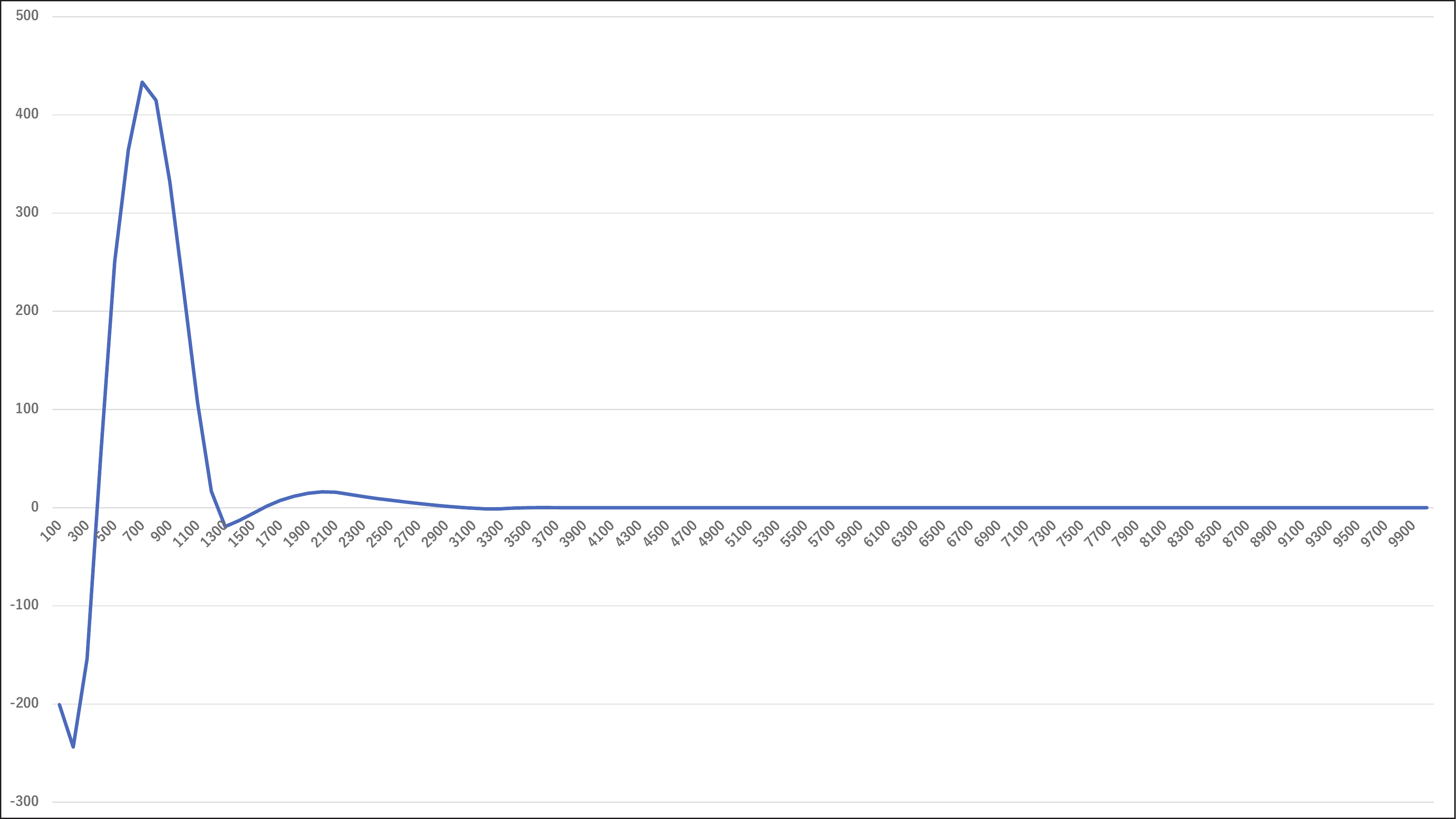}
\subcaption{chr20b}
\label{fig:chr20b}
\end{minipage} &
\begin{minipage}{0.3\columnwidth}
\centering
\includegraphics[width = 50mm,pagebox=cropbox,clip]{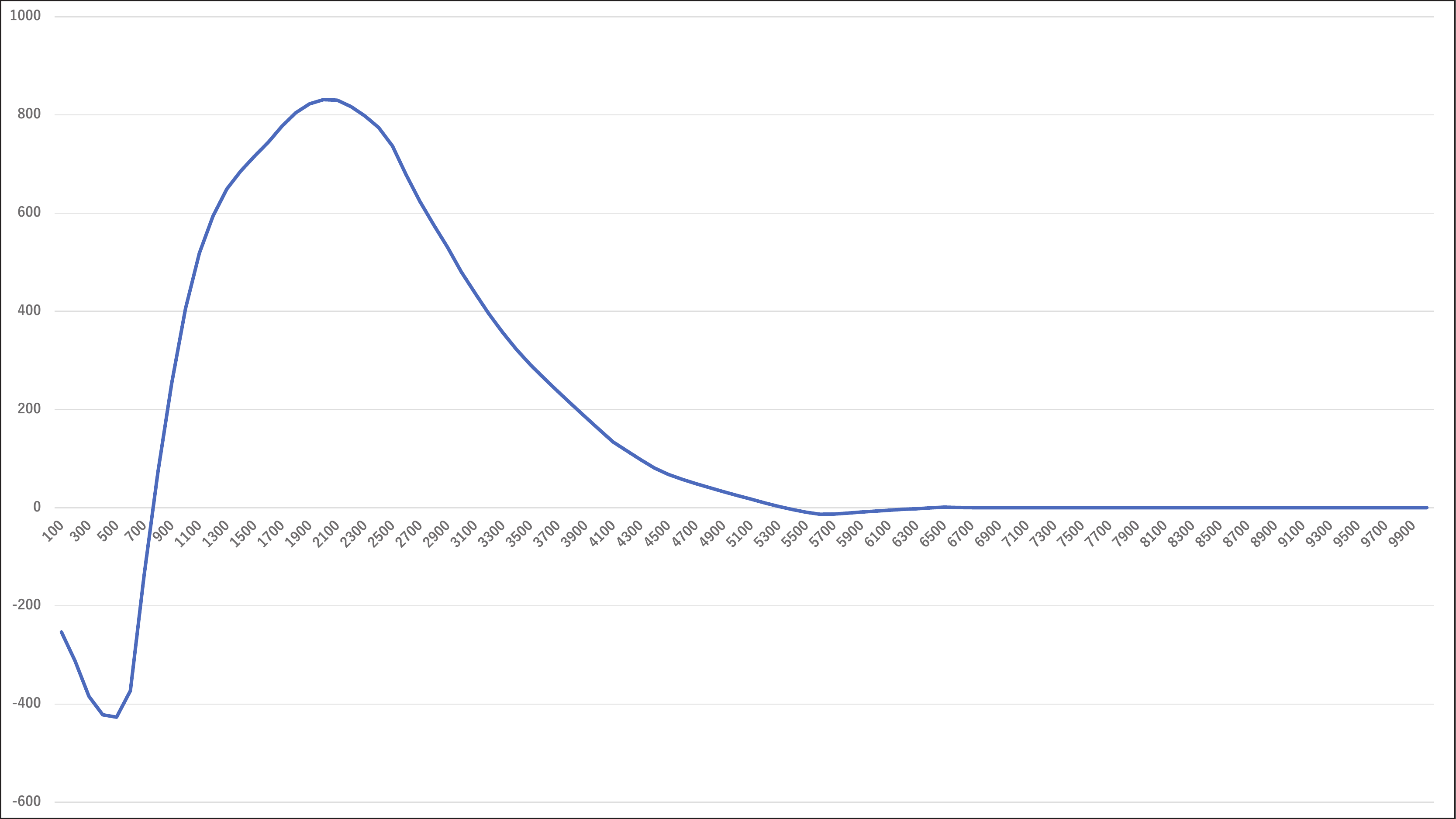}
\subcaption{chr22b}
\label{fig:chr22b}
\end{minipage} &
\begin{minipage}{0.3\columnwidth}
\centering
\includegraphics[width = 50mm,pagebox=cropbox,clip]{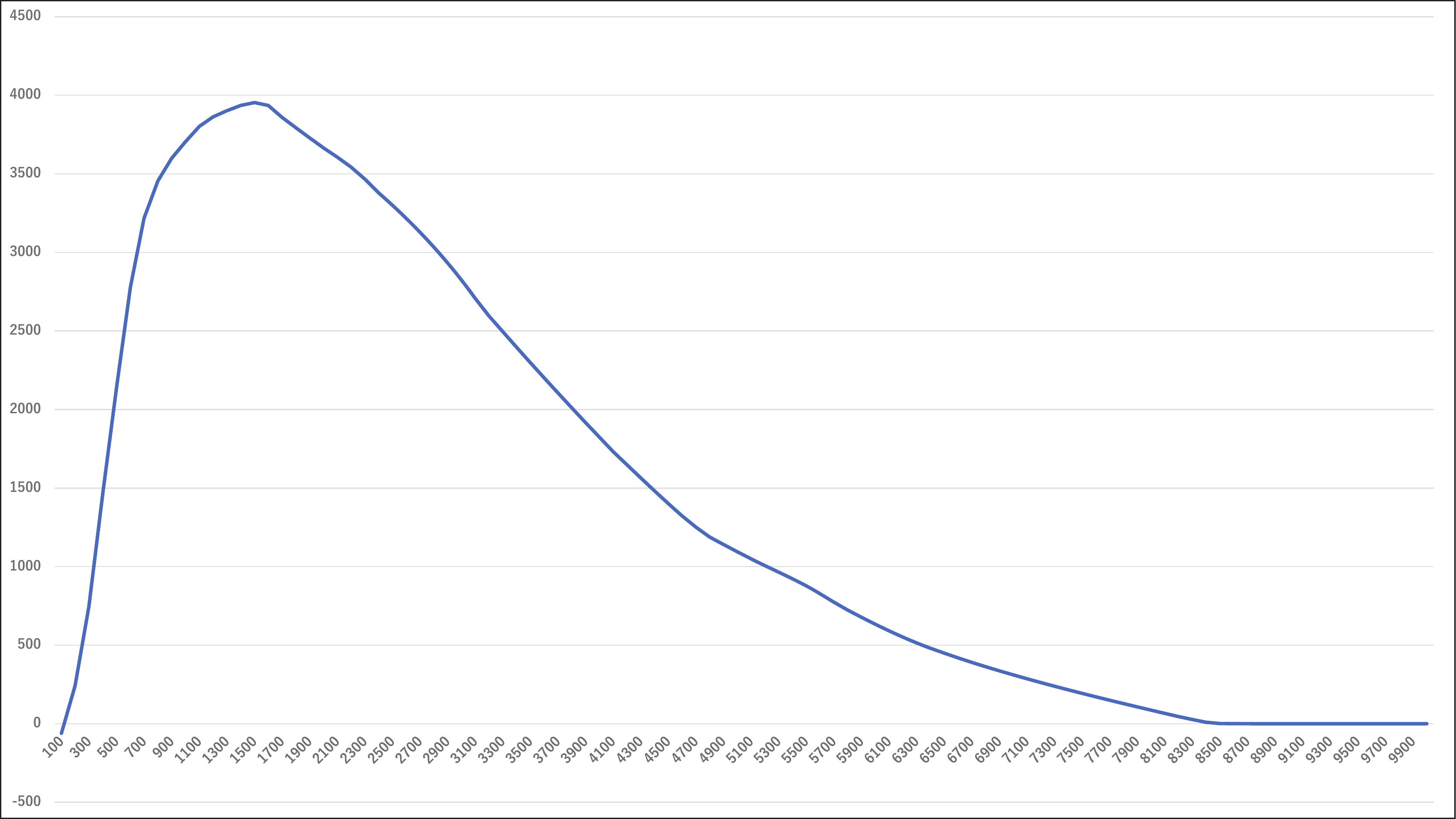}
\subcaption{chr12c}
\label{fig:chr12c}
\end{minipage} \\
\begin{minipage}{0.3\columnwidth}
\centering
\includegraphics[width = 50mm,pagebox=cropbox,clip]{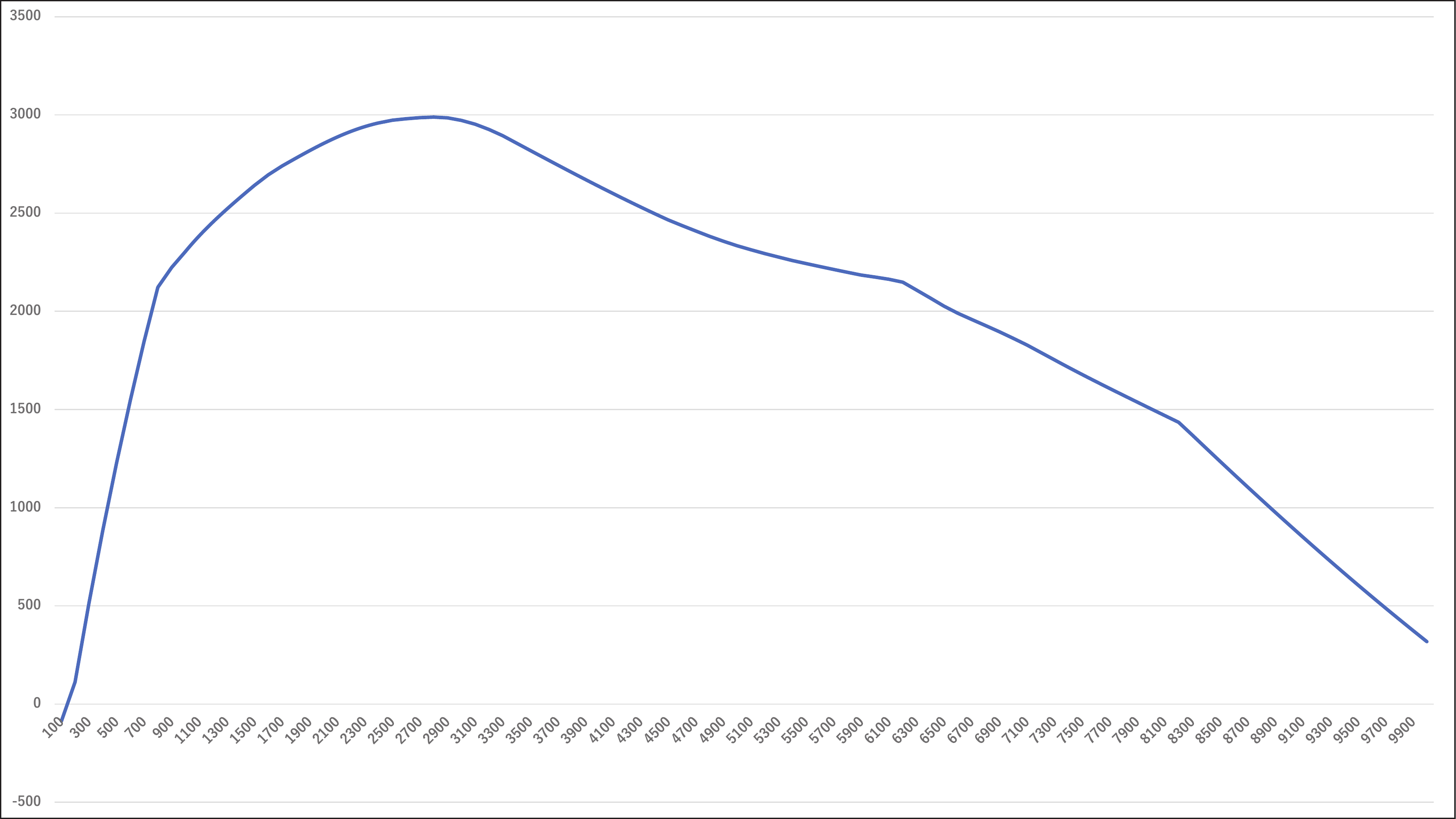}
\subcaption{chr15c}
\label{fig:chr15c}
\end{minipage} &
\begin{minipage}{0.3\columnwidth}
\centering
\includegraphics[width = 50mm,pagebox=cropbox,clip]{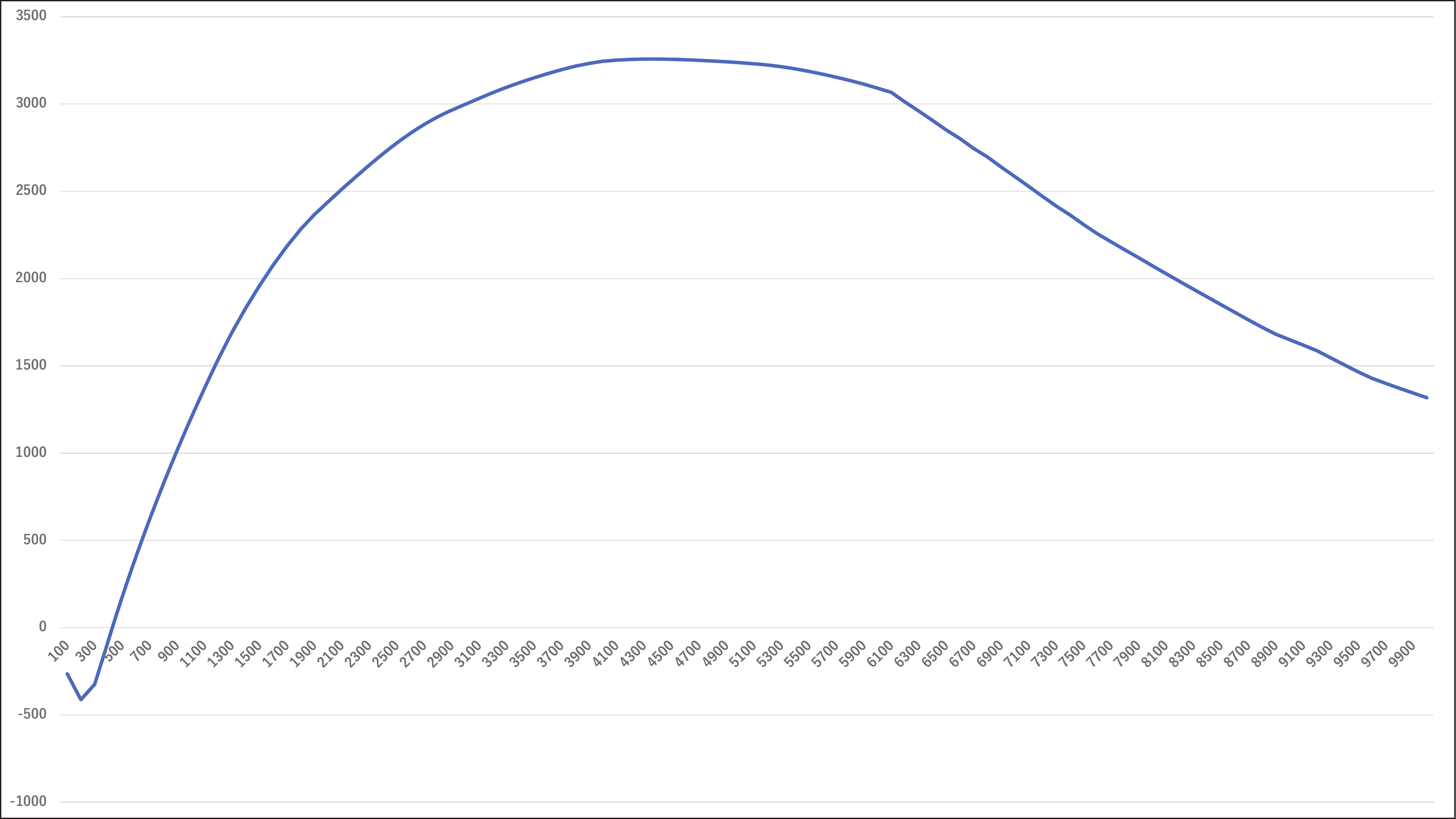}
\subcaption{chr20c}
\label{fig:chr20c}
\end{minipage} & \\
\end{tabular}
\caption{Results for   ``chr'' instances}
\label{fig:chr}
\end{figure}

\begin{figure}[H]
\begin{tabular}{ccc}
\begin{minipage}{0.3\columnwidth}
\centering
\includegraphics[width = 50mm,pagebox=cropbox,clip]{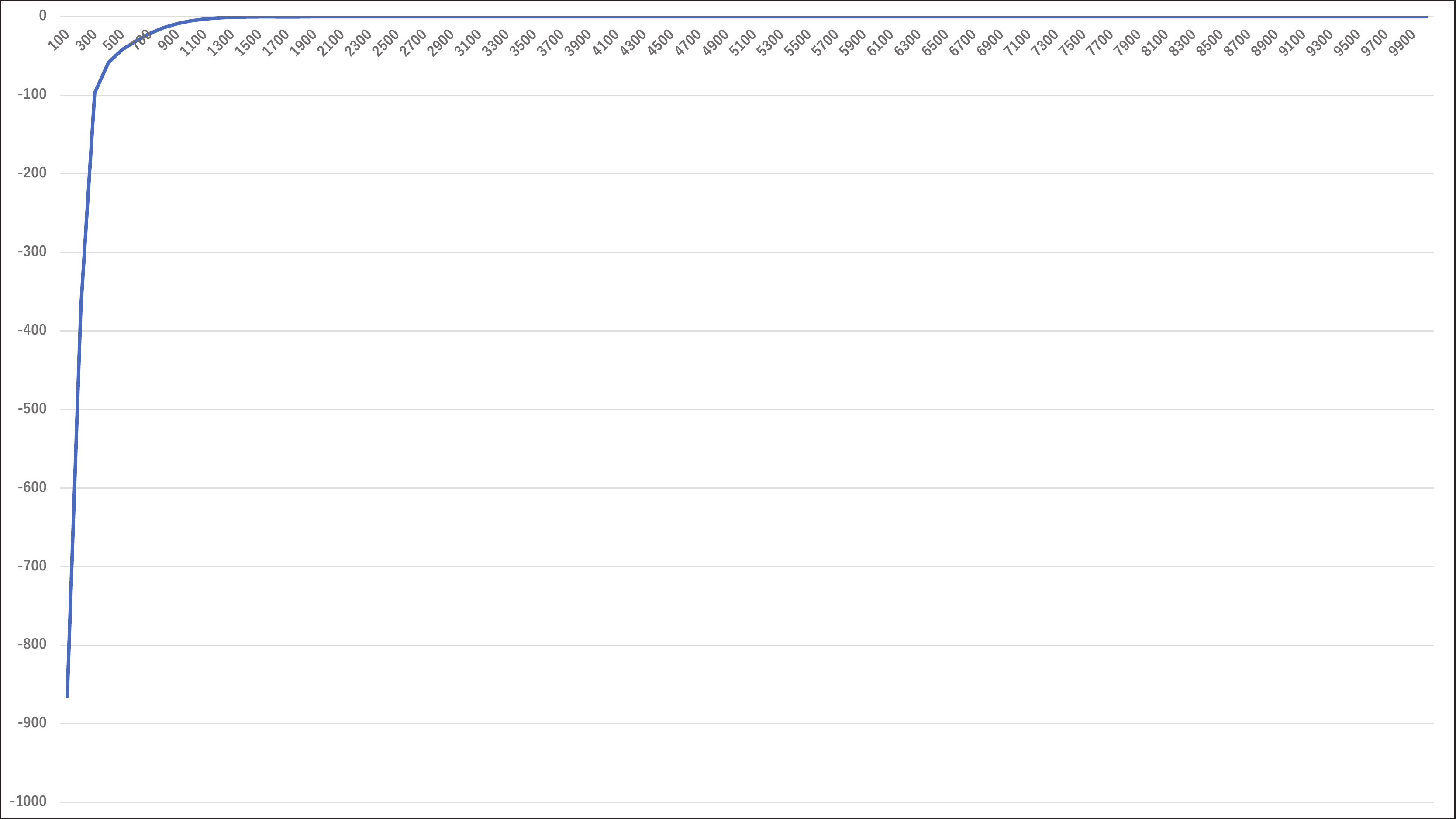}
\subcaption{Had12}
\label{fig:Had12}
\end{minipage} &
\begin{minipage}{0.3\columnwidth}
\centering
\includegraphics[width = 50mm,pagebox=cropbox,clip]{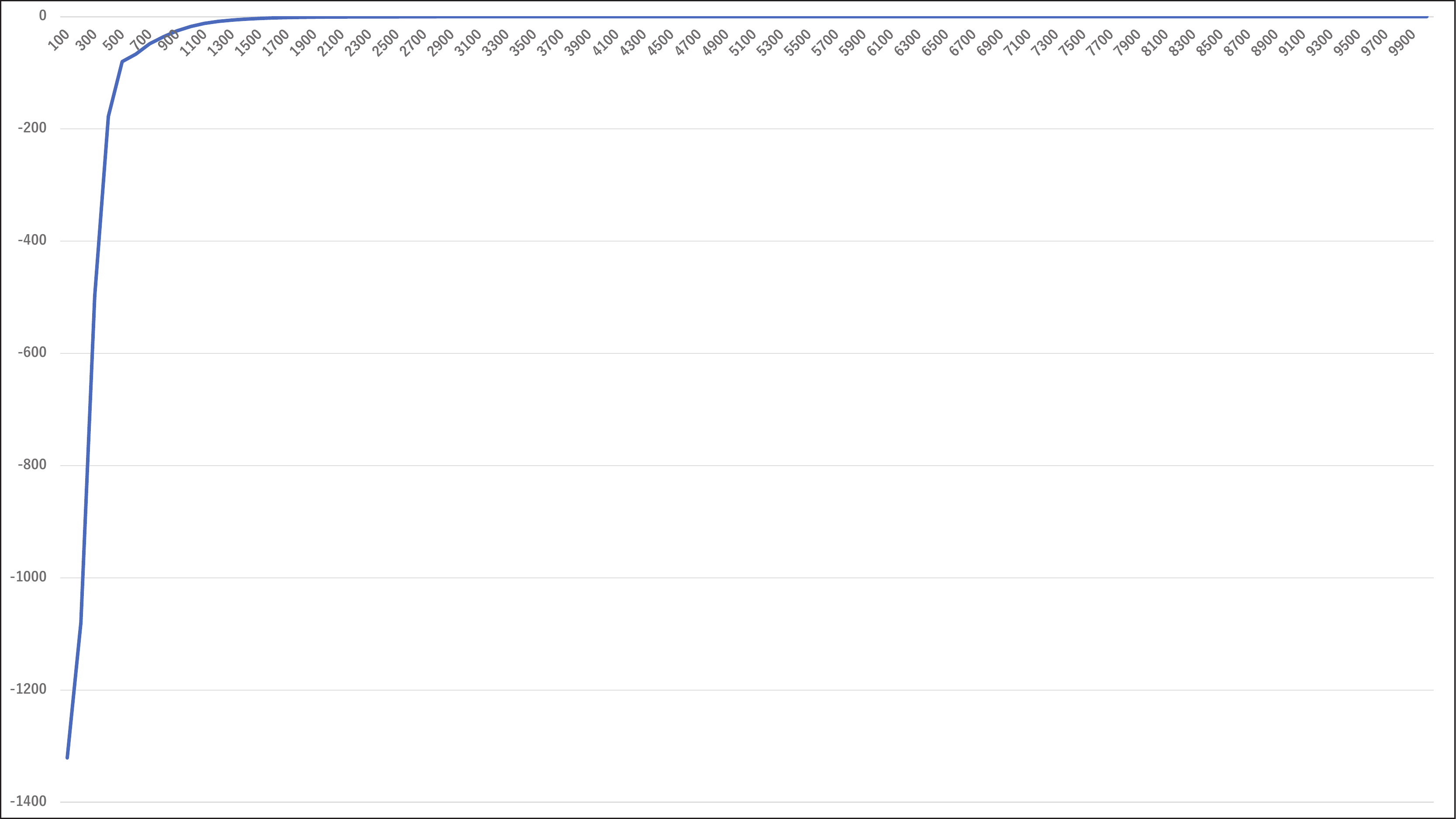}
\subcaption{Had14}
\label{fig:Had14}
\end{minipage} &
\begin{minipage}{0.3\columnwidth}
\centering
\includegraphics[width = 50mm,pagebox=cropbox,clip]{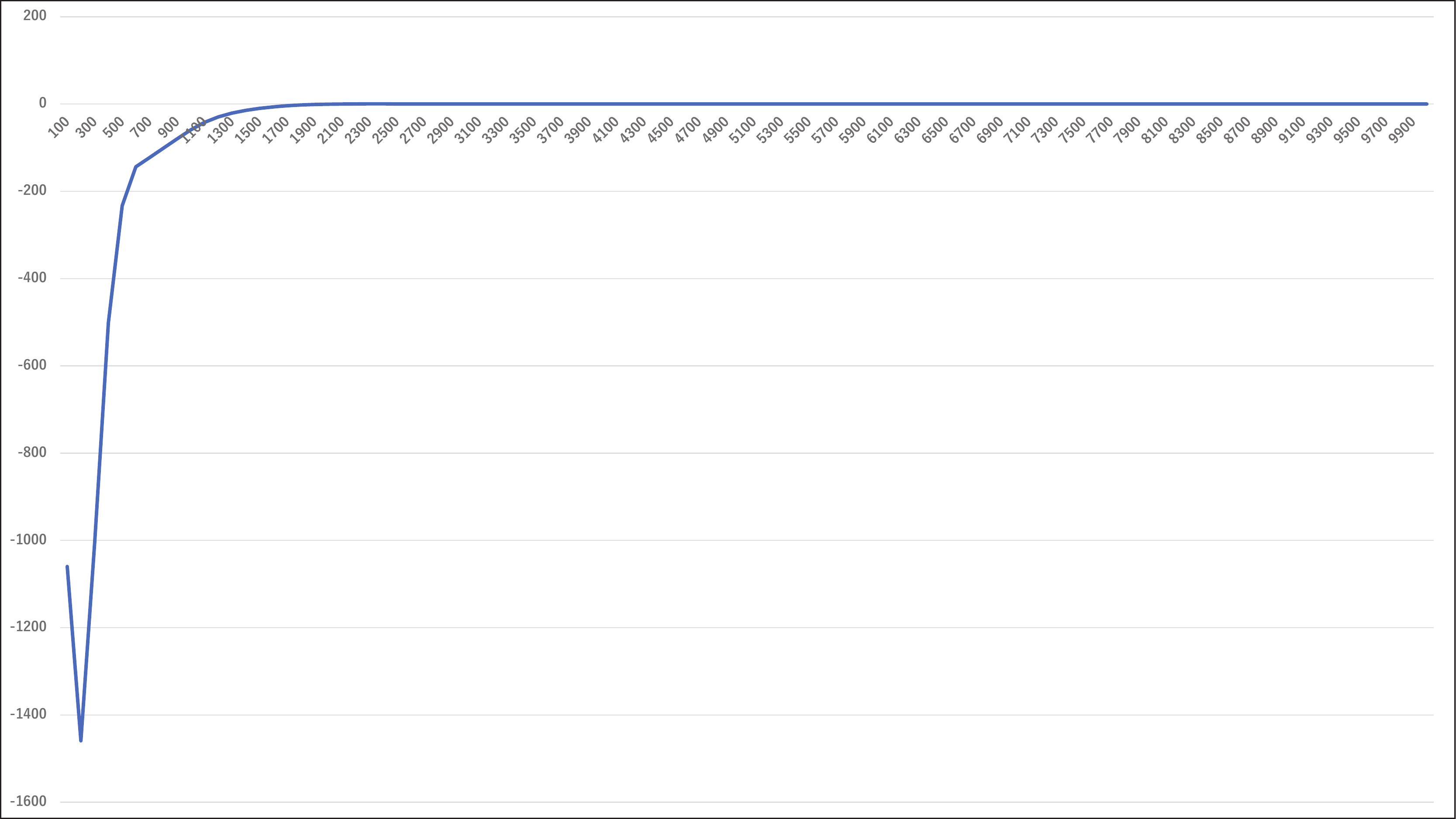}
\subcaption{Had16}
\label{fig:Had16}
\end{minipage} \\
\begin{minipage}{0.3\columnwidth}
\centering
\includegraphics[width = 50mm,pagebox=cropbox,clip]{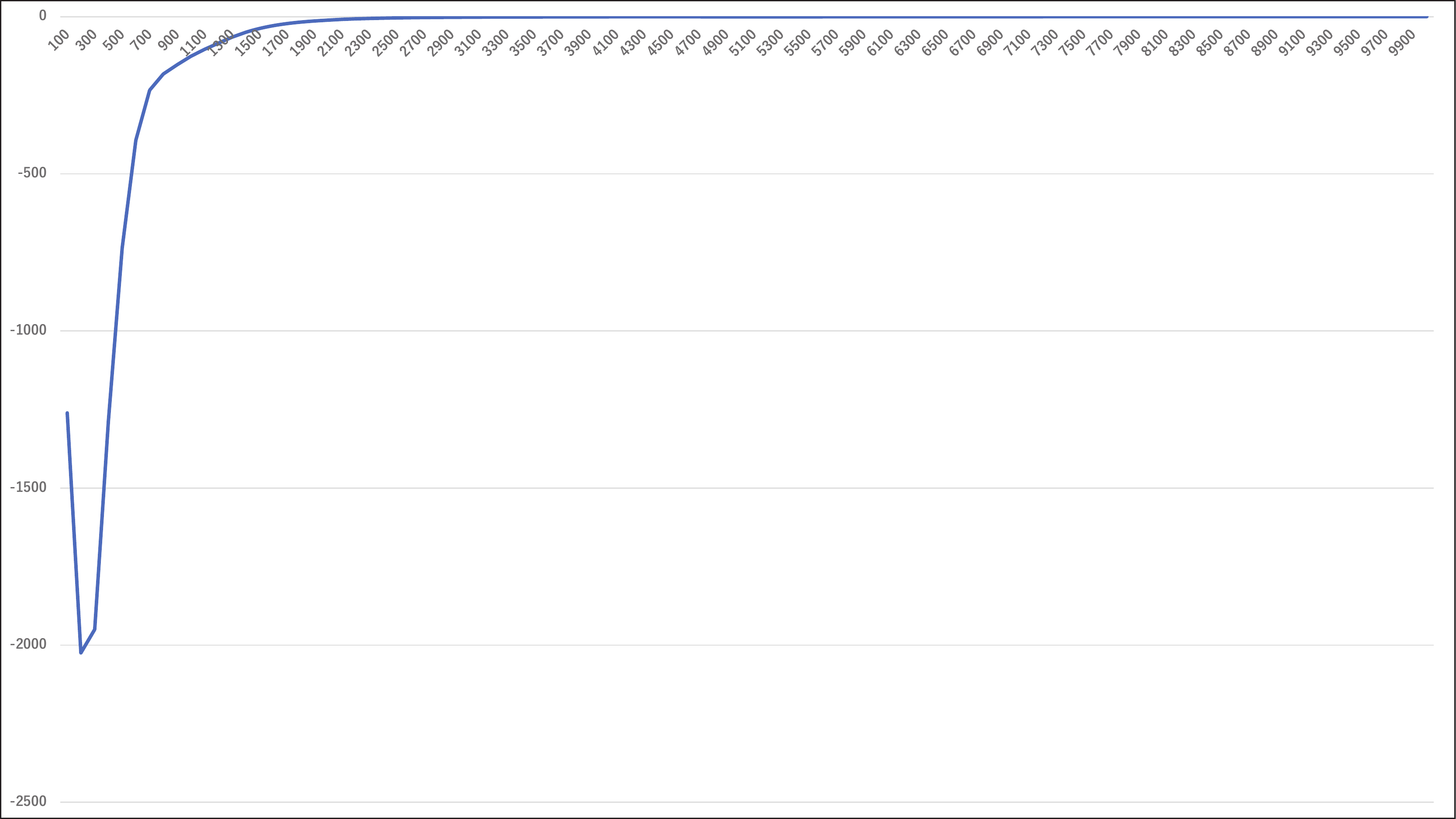}
\subcaption{Had18}
\label{fig:Had18}
\end{minipage} &
\begin{minipage}{0.3\columnwidth}
\centering
\includegraphics[width = 50mm,pagebox=cropbox,clip]{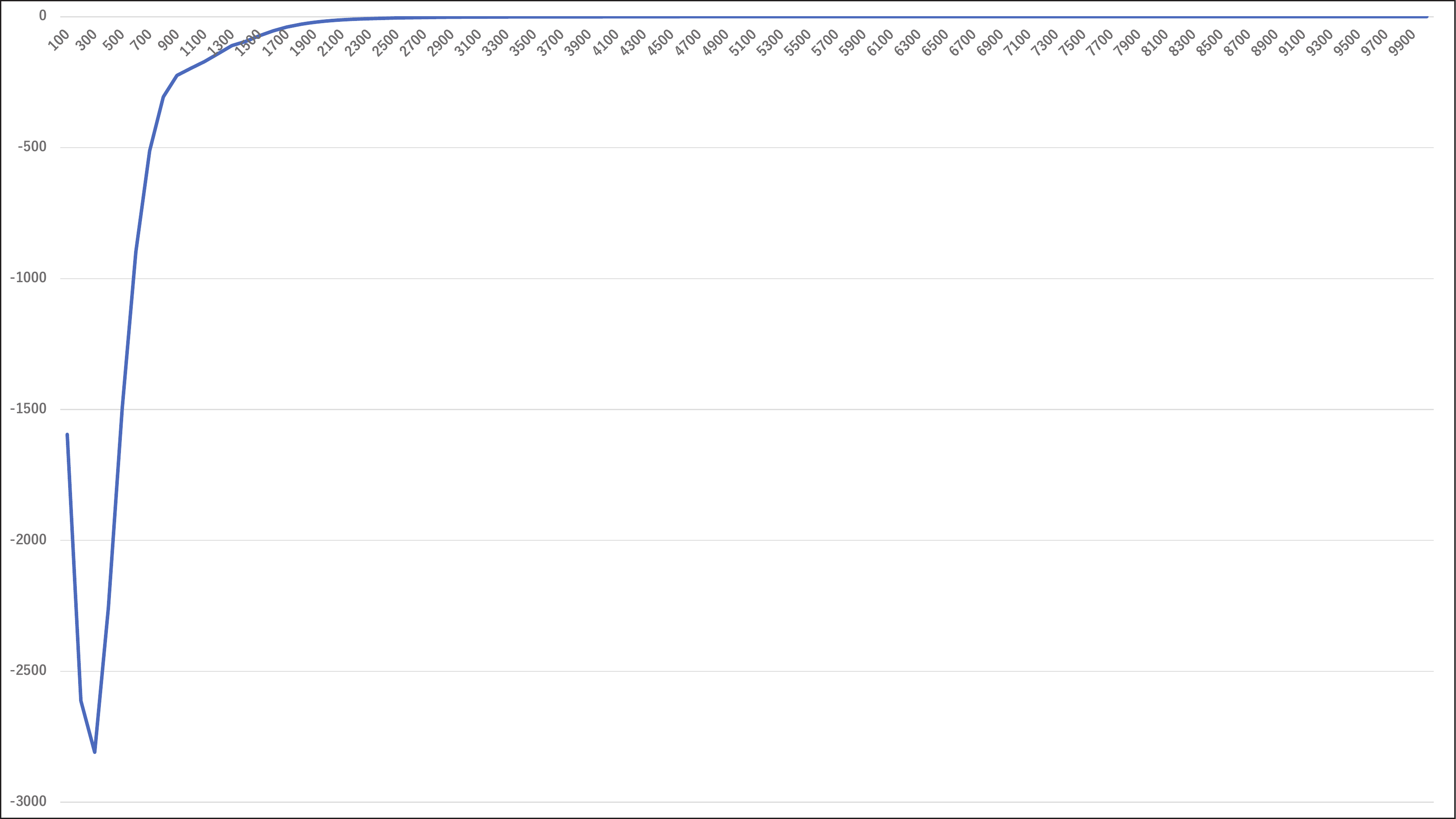}
\subcaption{Had20}
\label{fig:Had20}
\end{minipage} & \\
\end{tabular}
\caption{Results for  ``Had'' instances}
\label{fig:Had}
\end{figure}

\begin{figure}[H]
\begin{tabular}{ccc}
\begin{minipage}{0.3\columnwidth}
\centering
\includegraphics[width = 50mm,pagebox=cropbox,clip]{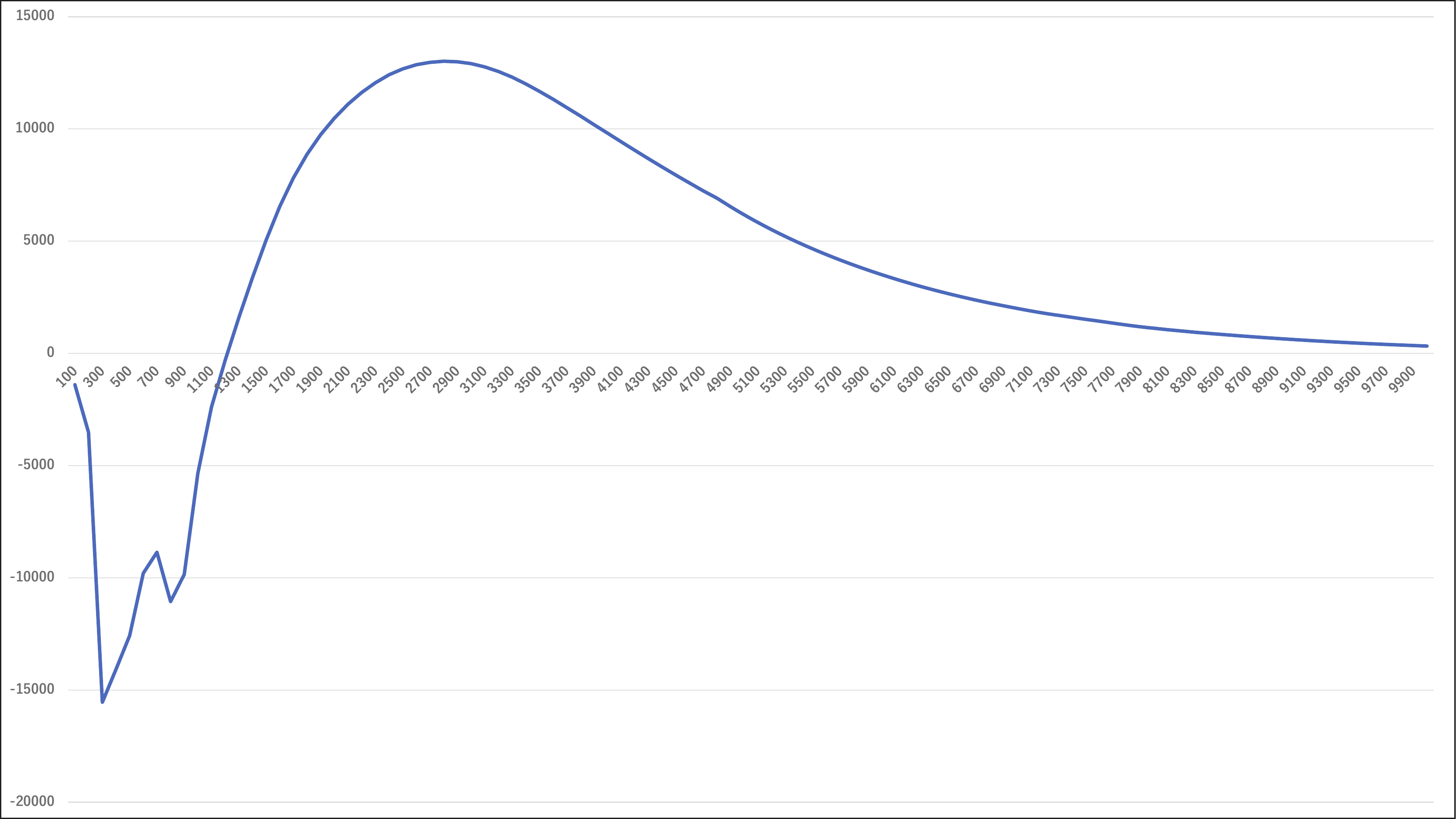}
\subcaption{Kra30a}
\label{fig:Kra30a}
\end{minipage} & 
\begin{minipage}{0.3\columnwidth}
\centering
\includegraphics[width = 50mm,pagebox=cropbox,clip]{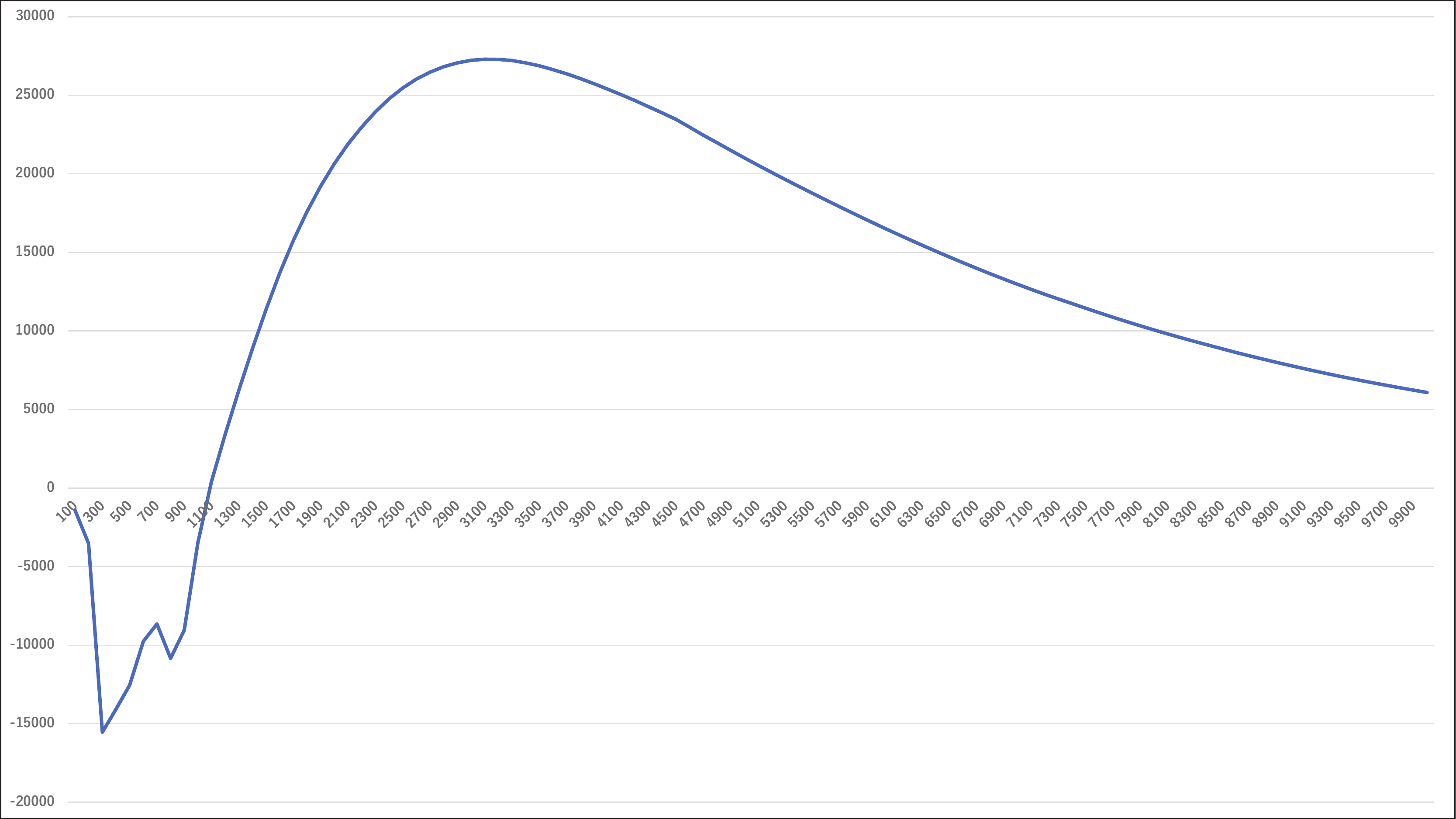}
\subcaption{Kra30b}
\label{fig:Kra30b}
\end{minipage}  &
\begin{minipage}{0.3\columnwidth}
\centering
\includegraphics[width = 50mm,pagebox=cropbox,clip]{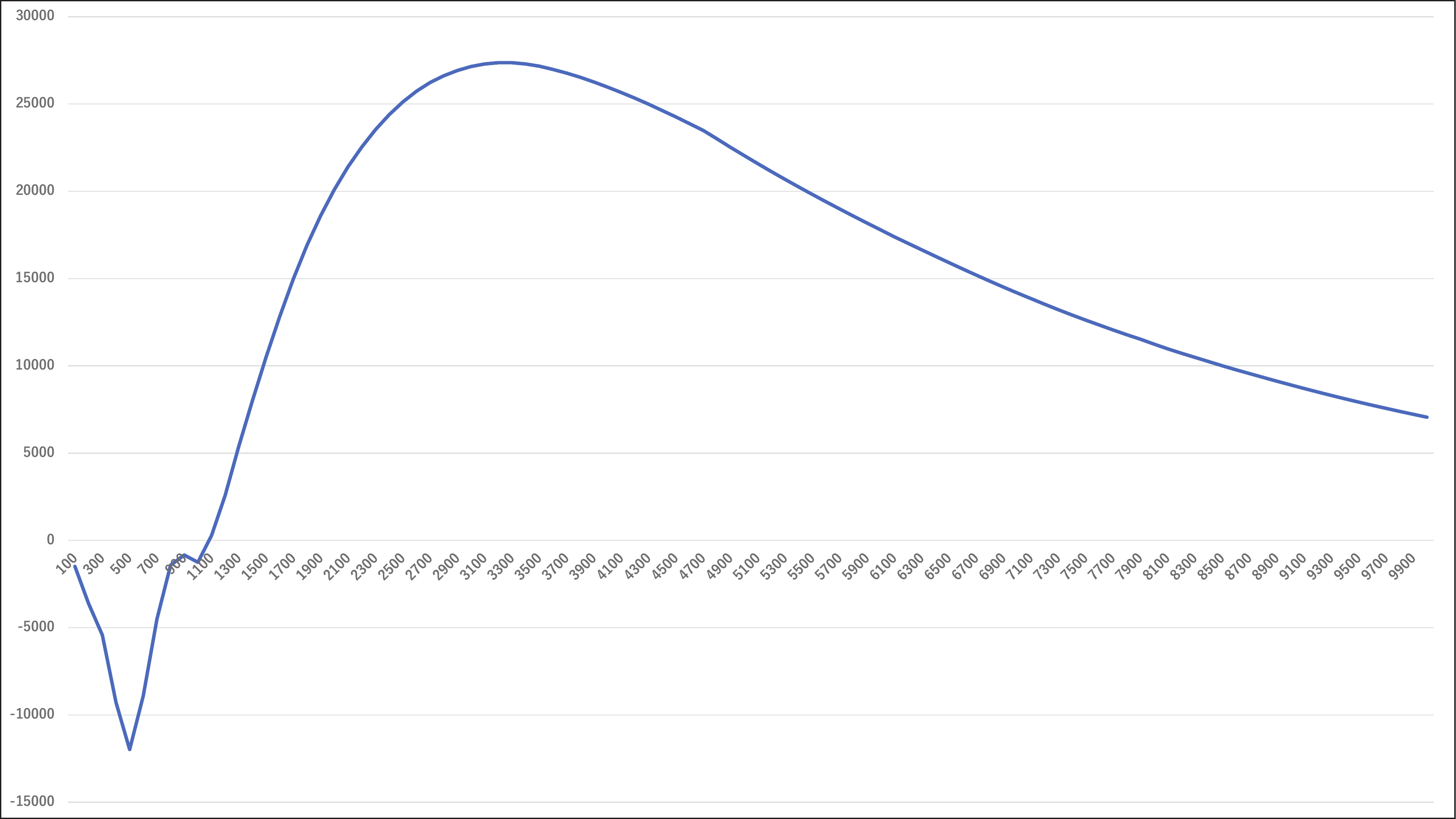}
\subcaption{Kra32}
\label{fig:Kra32}
\end{minipage} 
\end{tabular}
\caption{Results for  ``Kra'' instances}
\label{fig:Kra}
\end{figure}

\begin{figure}[H]
\begin{tabular}{ccc}
\begin{minipage}{0.3\columnwidth}
\includegraphics[width = 50mm,pagebox=cropbox,clip]{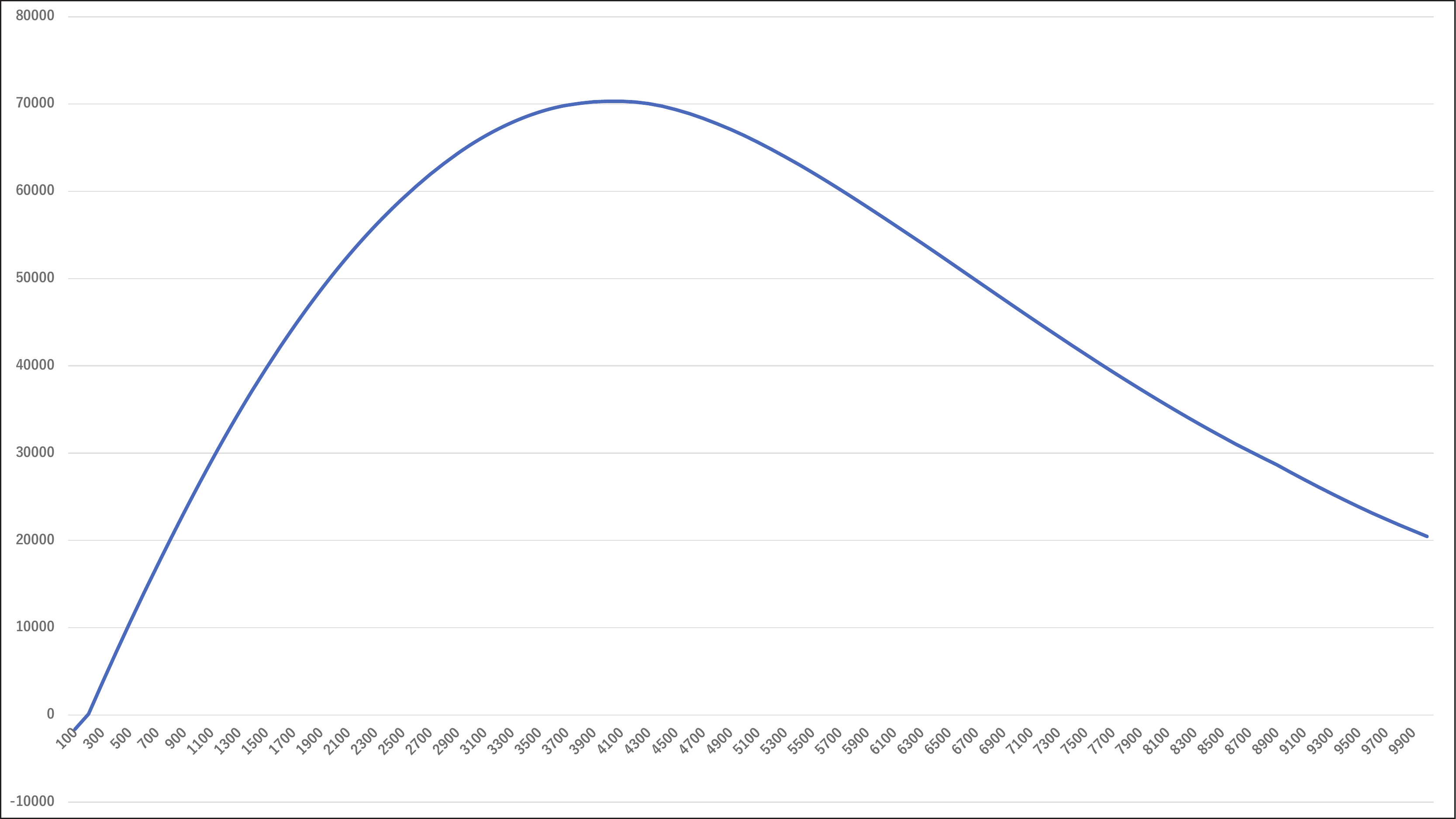}
\subcaption{Rou12}
\label{fig:Rou12}
\end{minipage} &
\begin{minipage}{0.3\columnwidth}
\centering
\includegraphics[width = 50mm,pagebox=cropbox,clip]{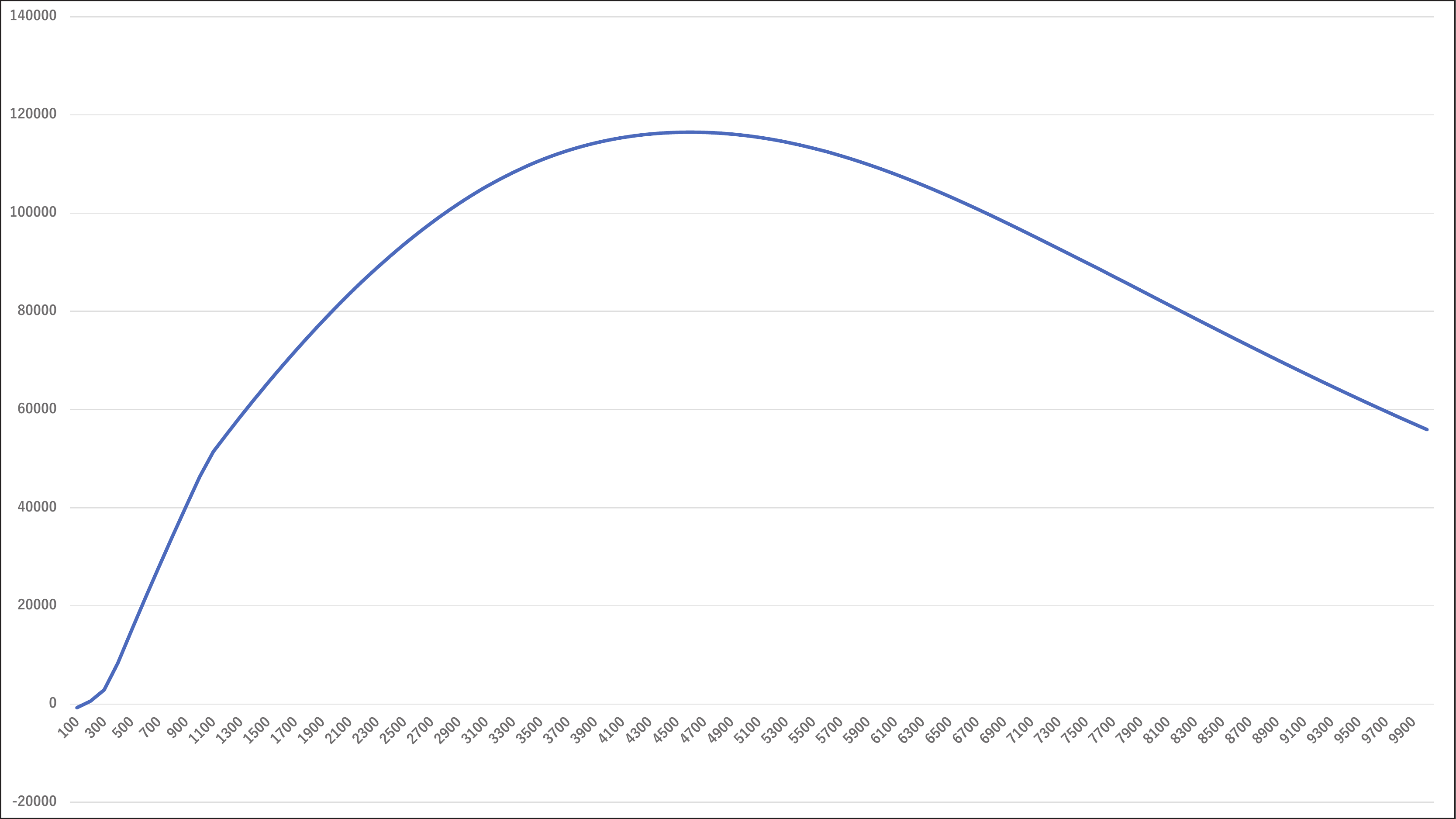}
\subcaption{Rou15}
\label{fig:Rou15}
\end{minipage} &
\begin{minipage}{0.3\columnwidth}
\centering
\includegraphics[width = 50mm,pagebox=cropbox,clip]{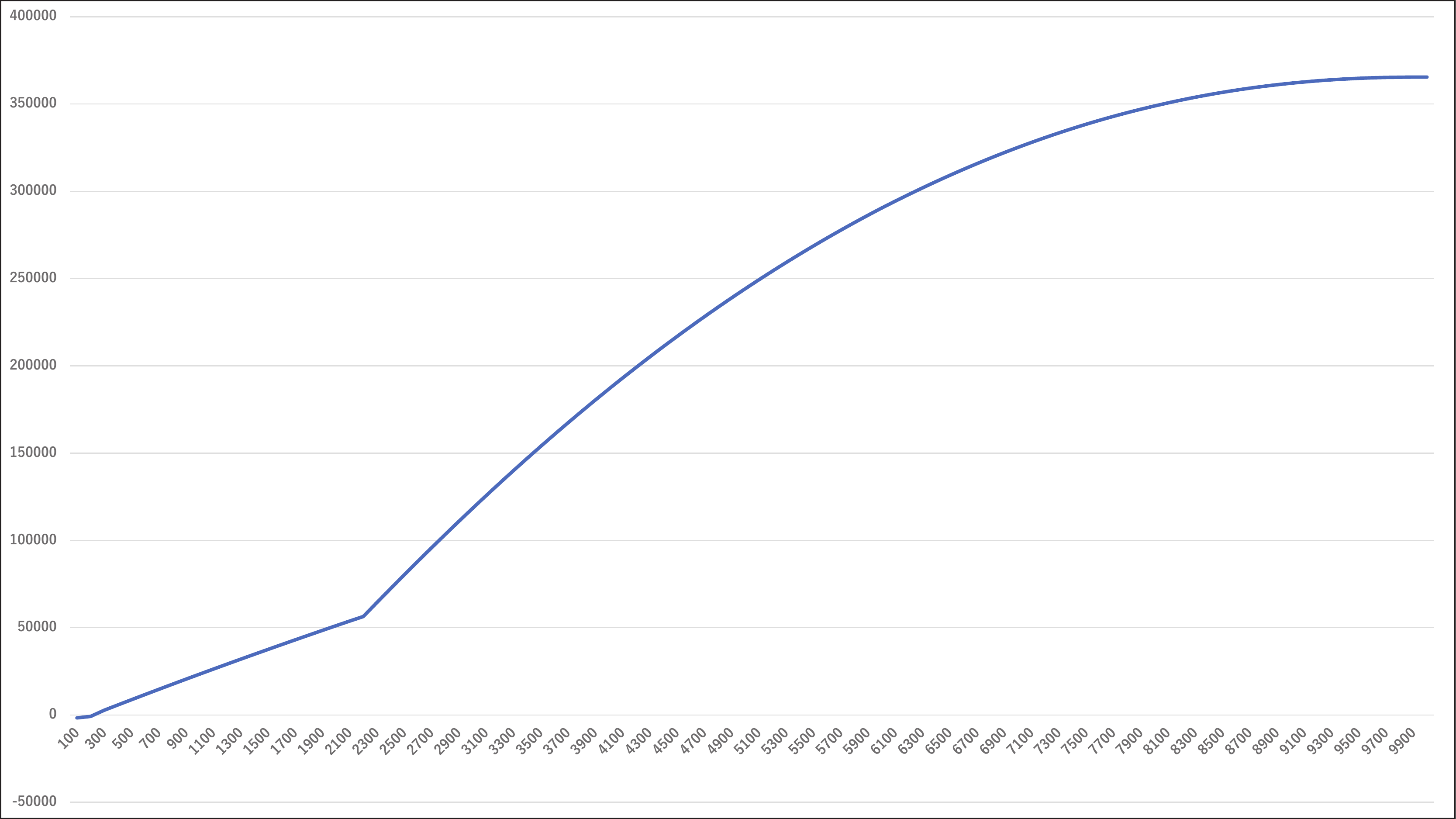}
\subcaption{Rou20}
\label{fig:Rou20}
\end{minipage}
\end{tabular}
\caption{Results for   ``Rou'' instances}
\label{fig:Rou}
\end{figure}

\begin{figure}[H]
\begin{tabular}{ccc}
\begin{minipage}{0.3\columnwidth}
\centering
\includegraphics[width = 50mm,pagebox=cropbox,clip]{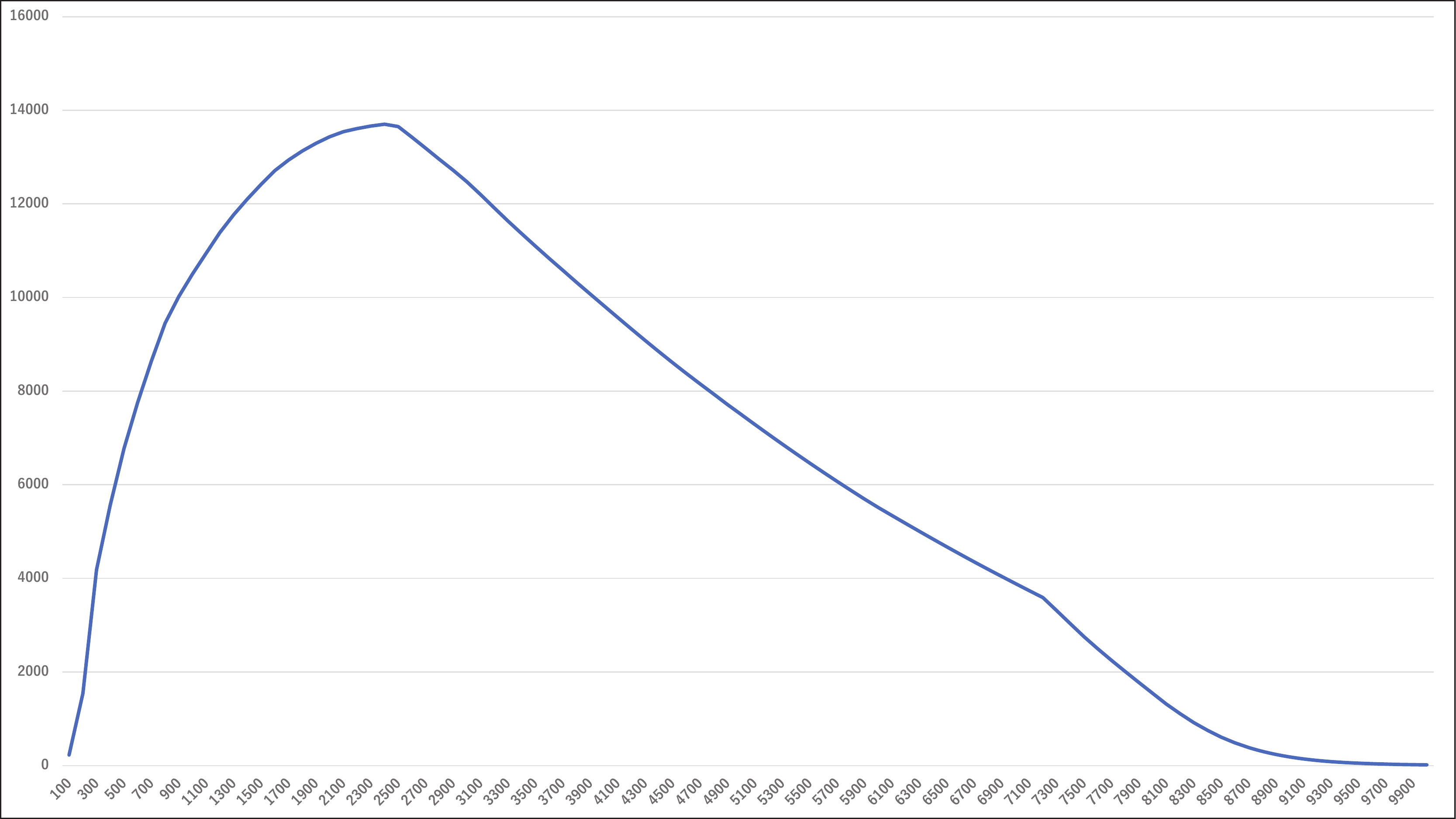}
\subcaption{Scr12}
\label{fig:Scr12}
\end{minipage} &
\begin{minipage}{0.3\columnwidth}
\centering
\includegraphics[width = 50mm,pagebox=cropbox,clip]{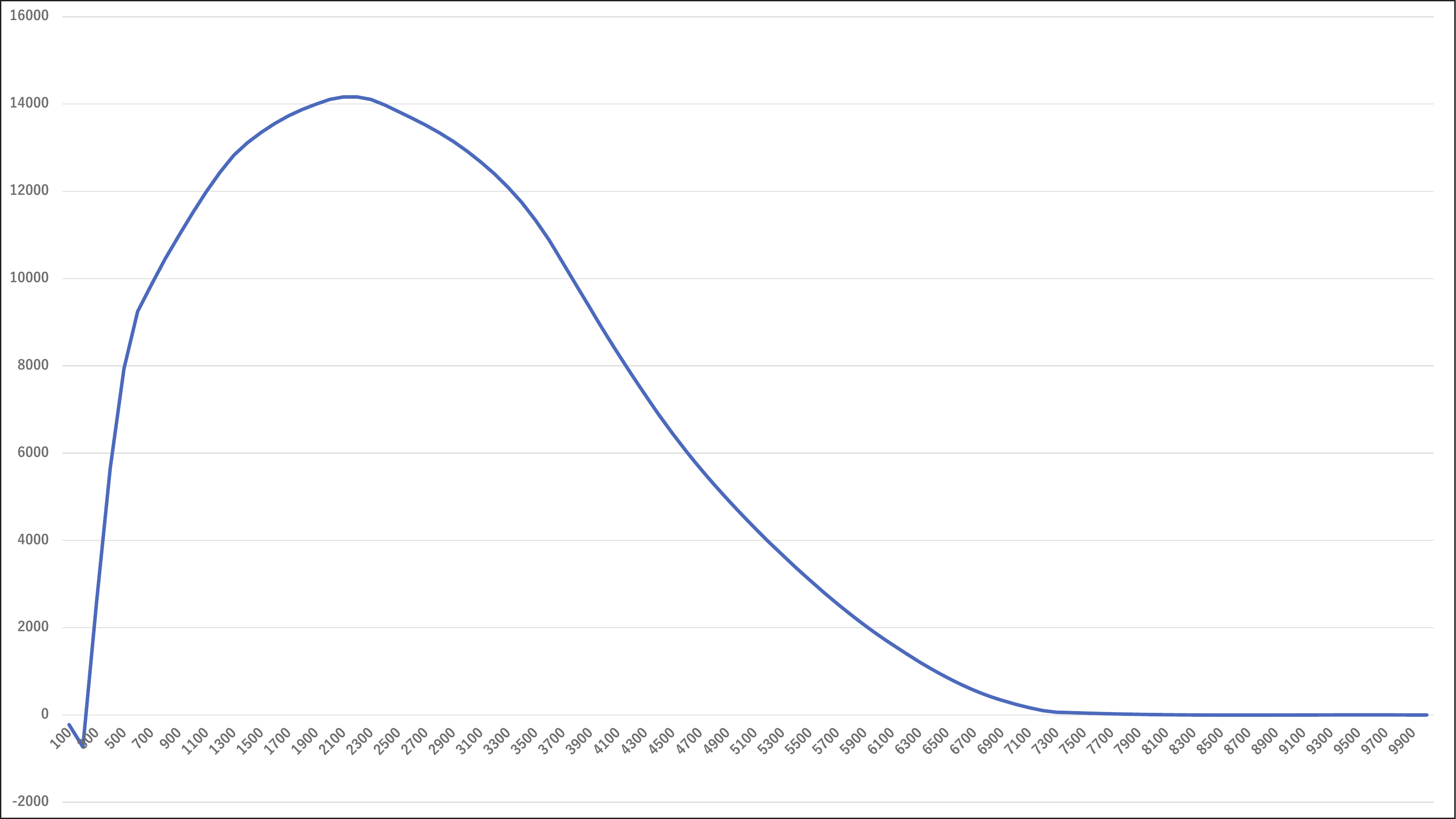}
\subcaption{Scr15}
\label{fig:Scr15}
\end{minipage} &
\begin{minipage}{0.3\columnwidth}
\centering
\includegraphics[width = 50mm,pagebox=cropbox,clip]{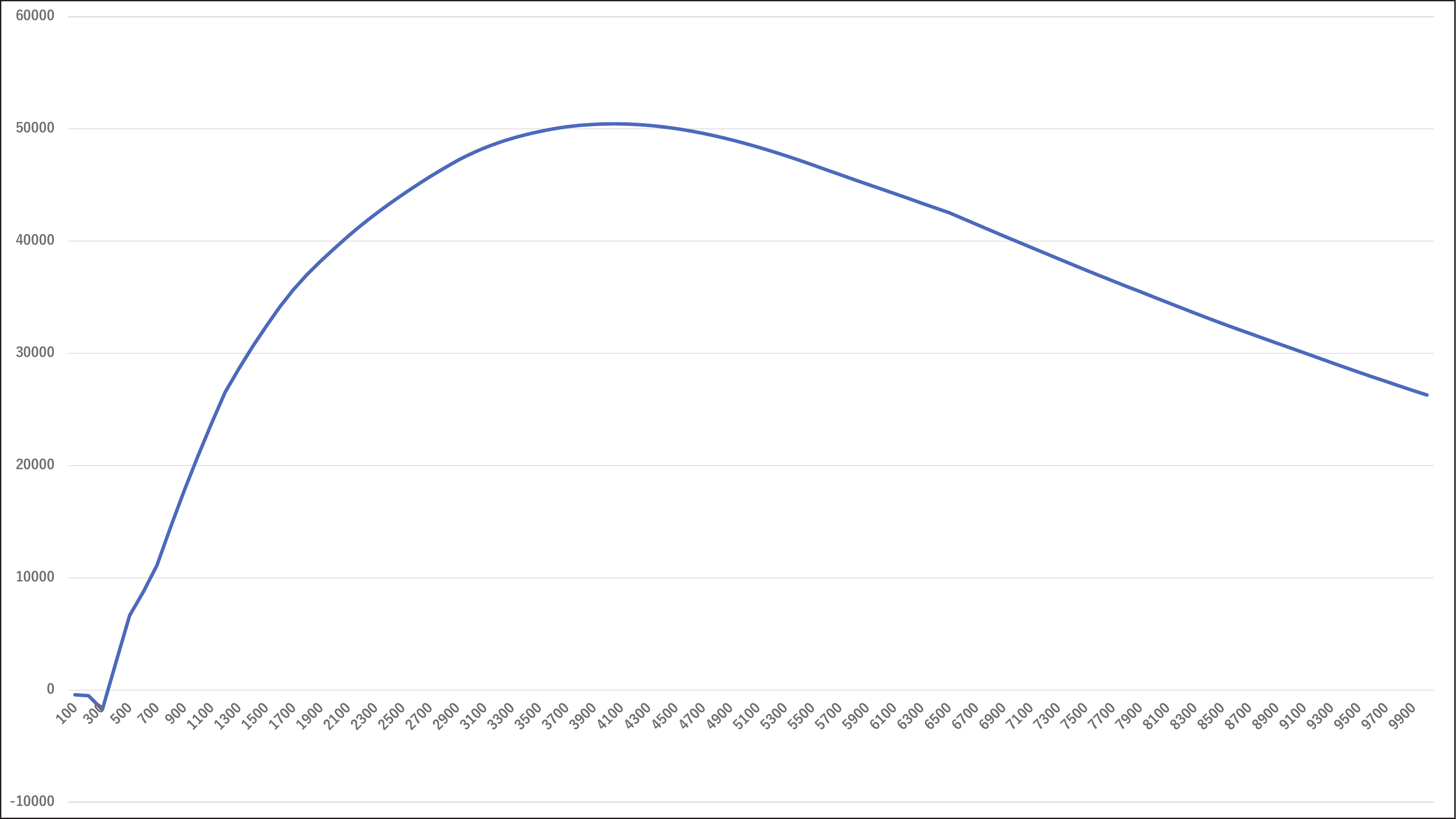}
\subcaption{Scr20}
\label{fig:Scr20}
\end{minipage}
\end{tabular}
\caption{Results for   ``Scr'' instances}
\label{fig:Scr}
\end{figure}

\begin{figure}[H]
\begin{tabular}{ccc}
\begin{minipage}{0.3\columnwidth}
\centering
\includegraphics[width = 50mm,pagebox=cropbox,clip]{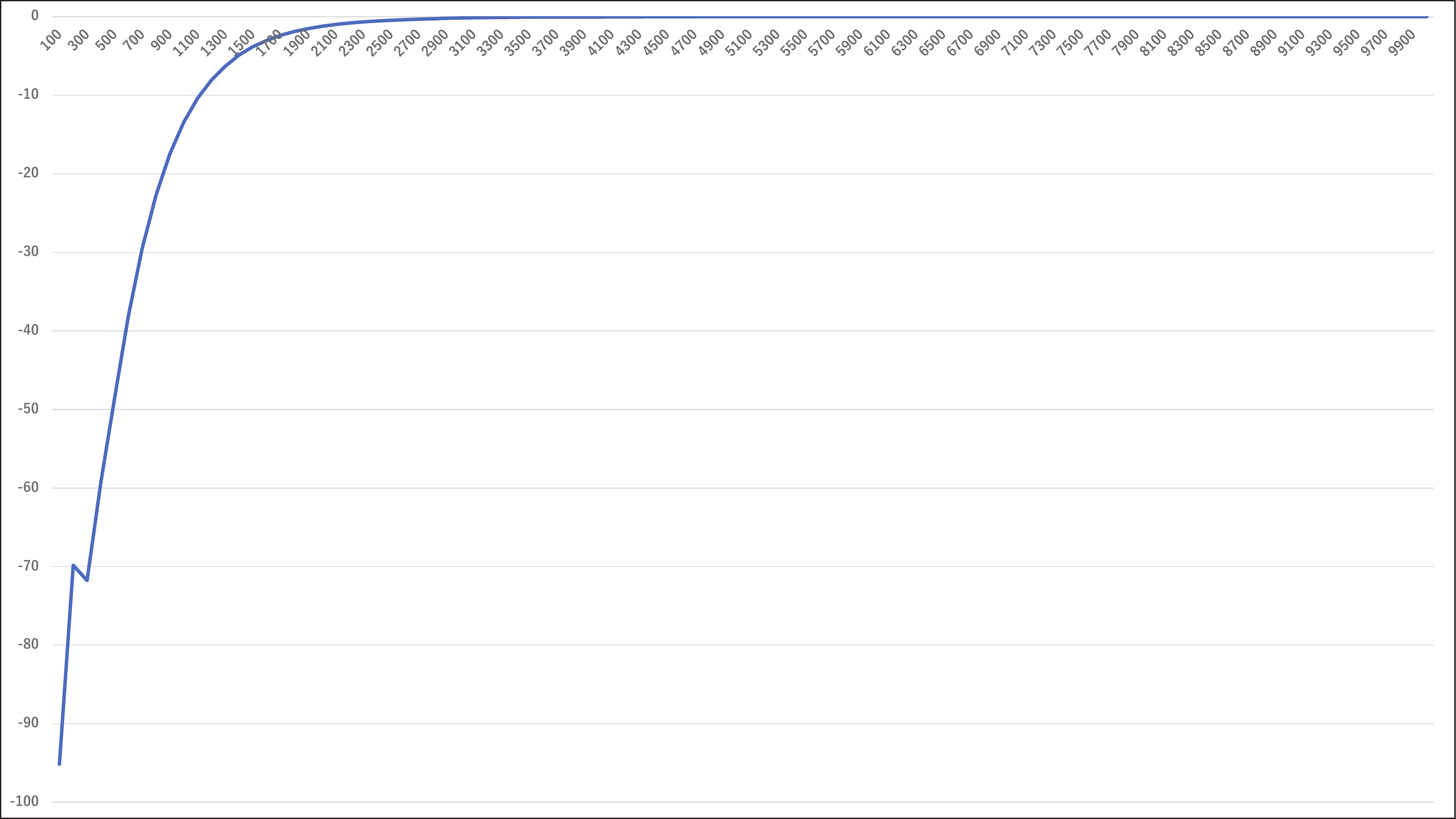}
\subcaption{Nug12}
\label{fig:Nug12}
\end{minipage} &
\begin{minipage}{0.3\columnwidth}
\centering
\includegraphics[width = 50mm,pagebox=cropbox,clip]{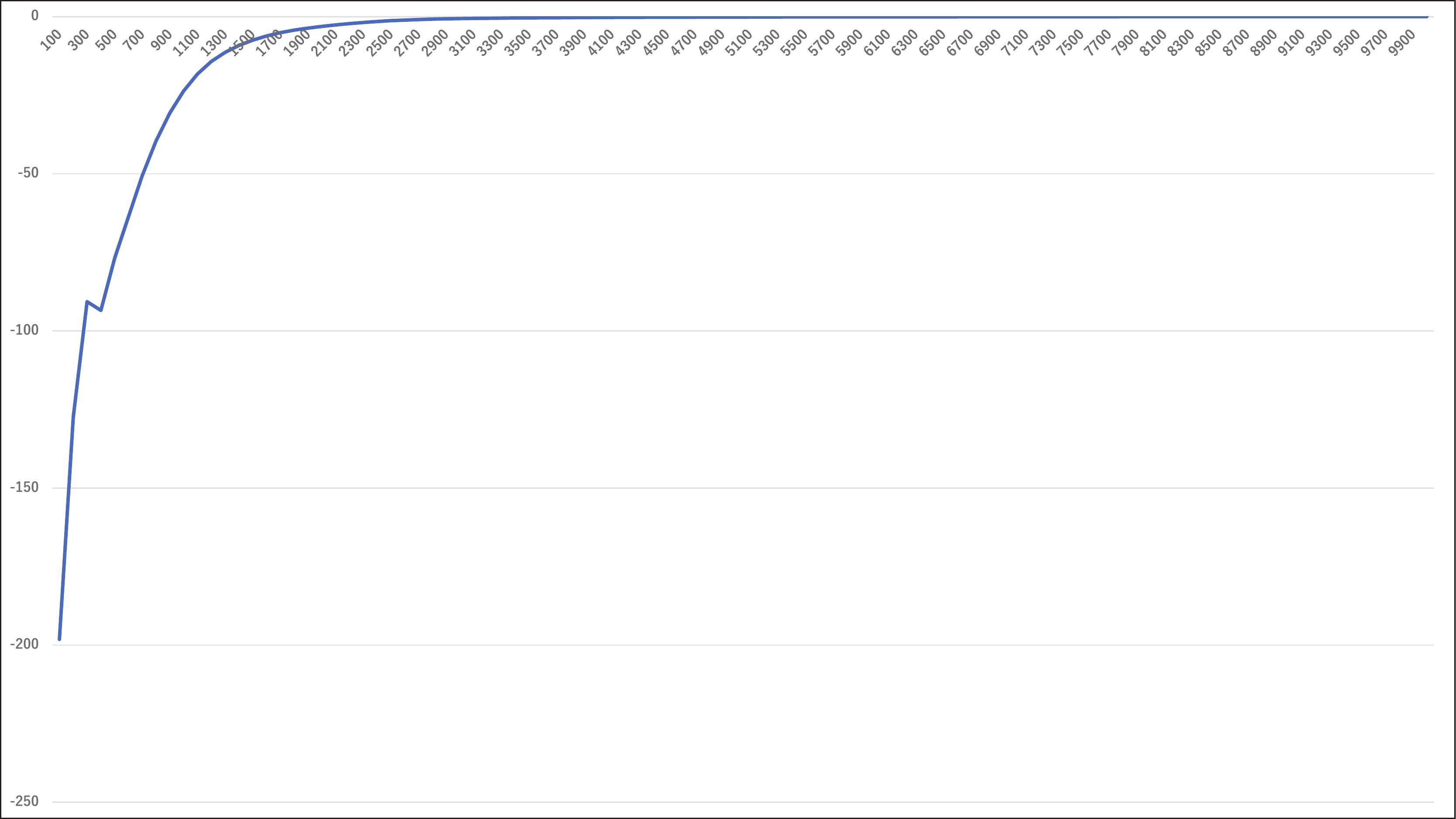}
\subcaption{Nug14}
\label{fig:Nug14}
\end{minipage} &
\begin{minipage}{0.3\columnwidth}
\centering
\includegraphics[width = 50mm,pagebox=cropbox,clip]{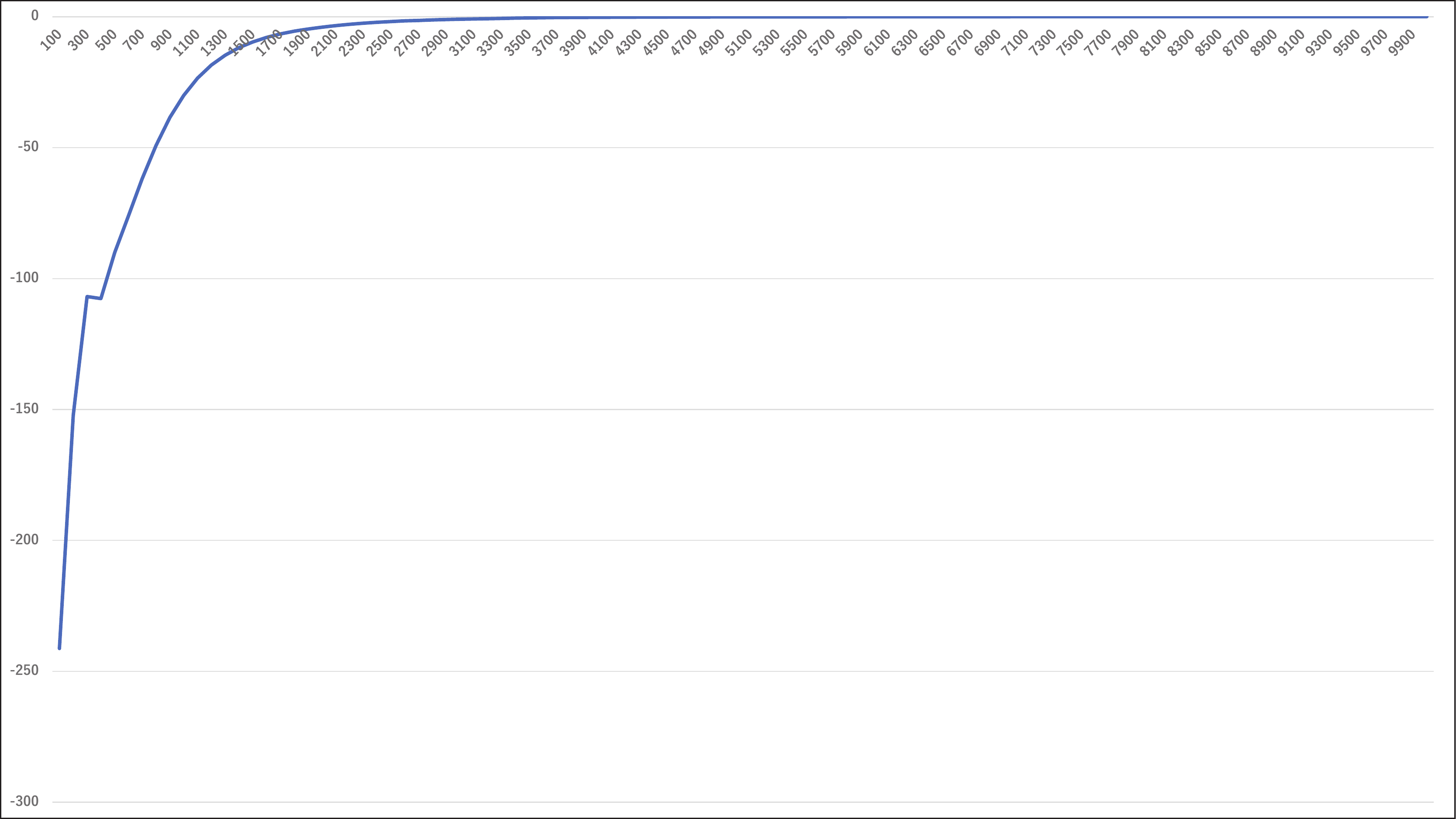}
\subcaption{Nug15}
\label{fig:Nug15}
\end{minipage} \\
\begin{minipage}{0.3\columnwidth}
\centering
\includegraphics[width = 50mm,pagebox=cropbox,clip]{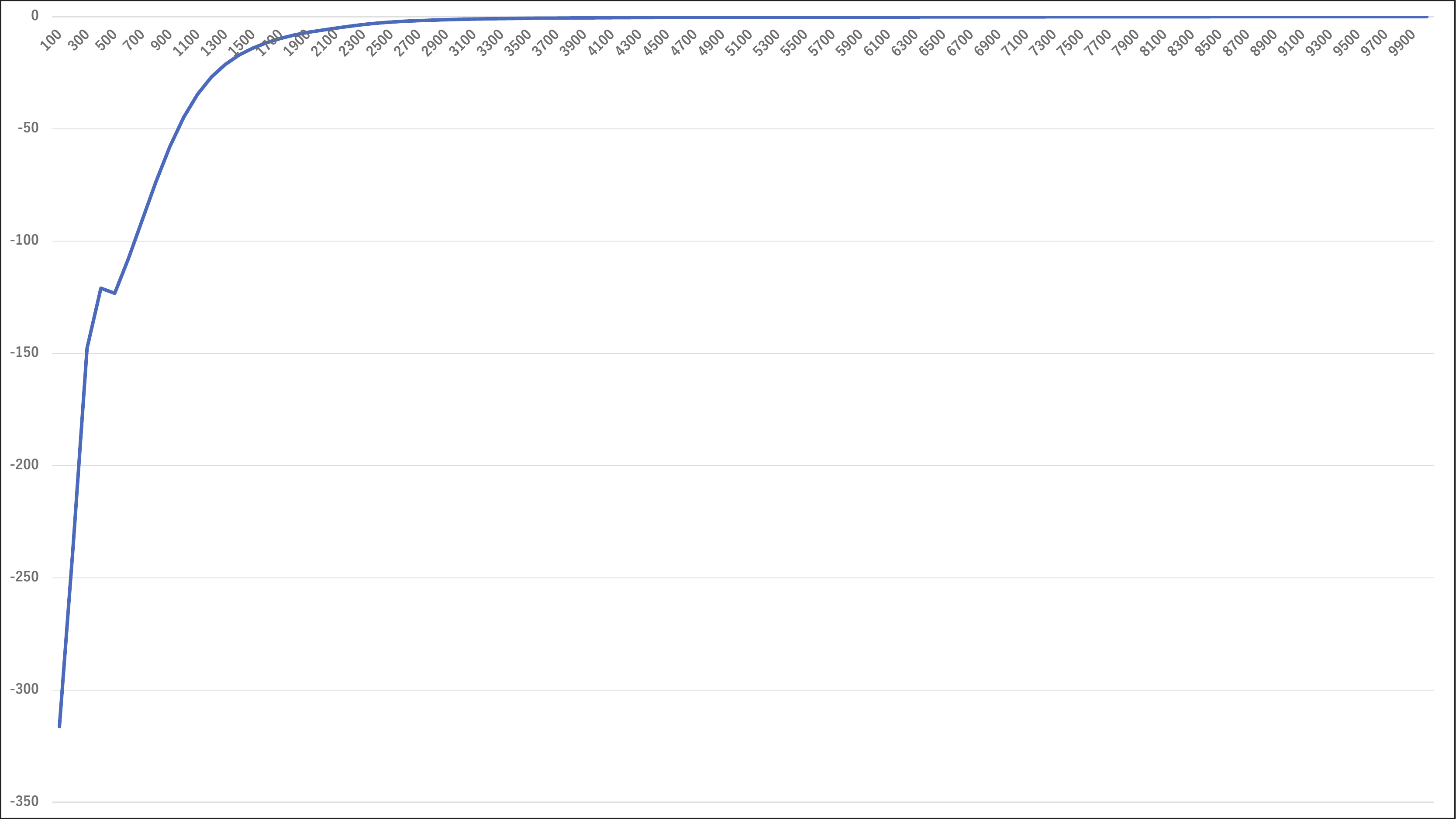}
\subcaption{Nug16a}
\label{fig:Nug16a}
\end{minipage} &
\begin{minipage}{0.3\columnwidth}
\centering
\includegraphics[width = 50mm,pagebox=cropbox,clip]{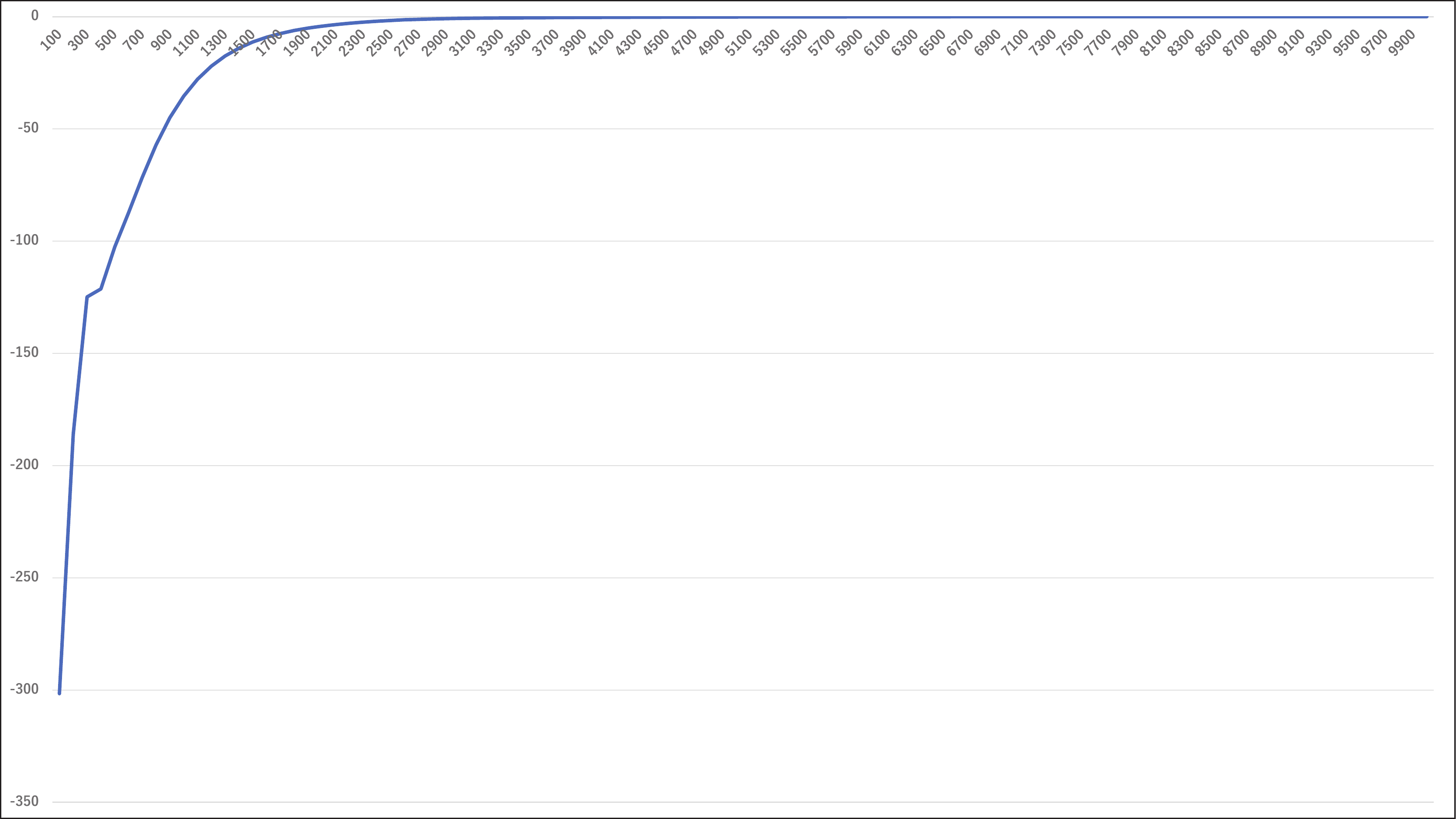}
\subcaption{Nug16b}
\label{fig:Nug16b}
\end{minipage} &
\begin{minipage}{0.3\columnwidth}
\centering
\includegraphics[width = 50mm,pagebox=cropbox,clip]{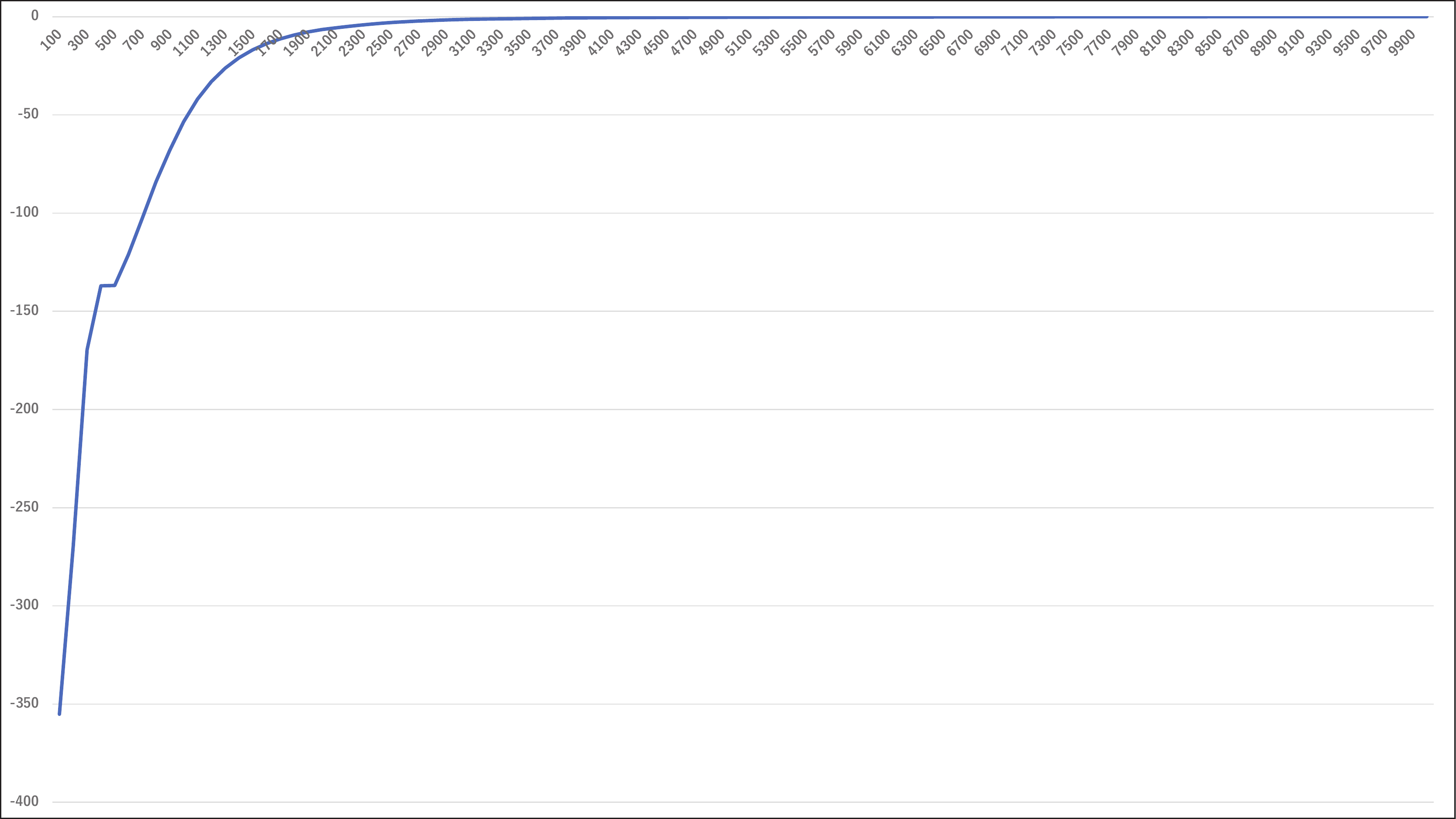}
\subcaption{Nug17}
\label{fig:Nug17}
\end{minipage} \\
\begin{minipage}{0.3\columnwidth}
\centering
\includegraphics[width = 50mm,pagebox=cropbox,clip]{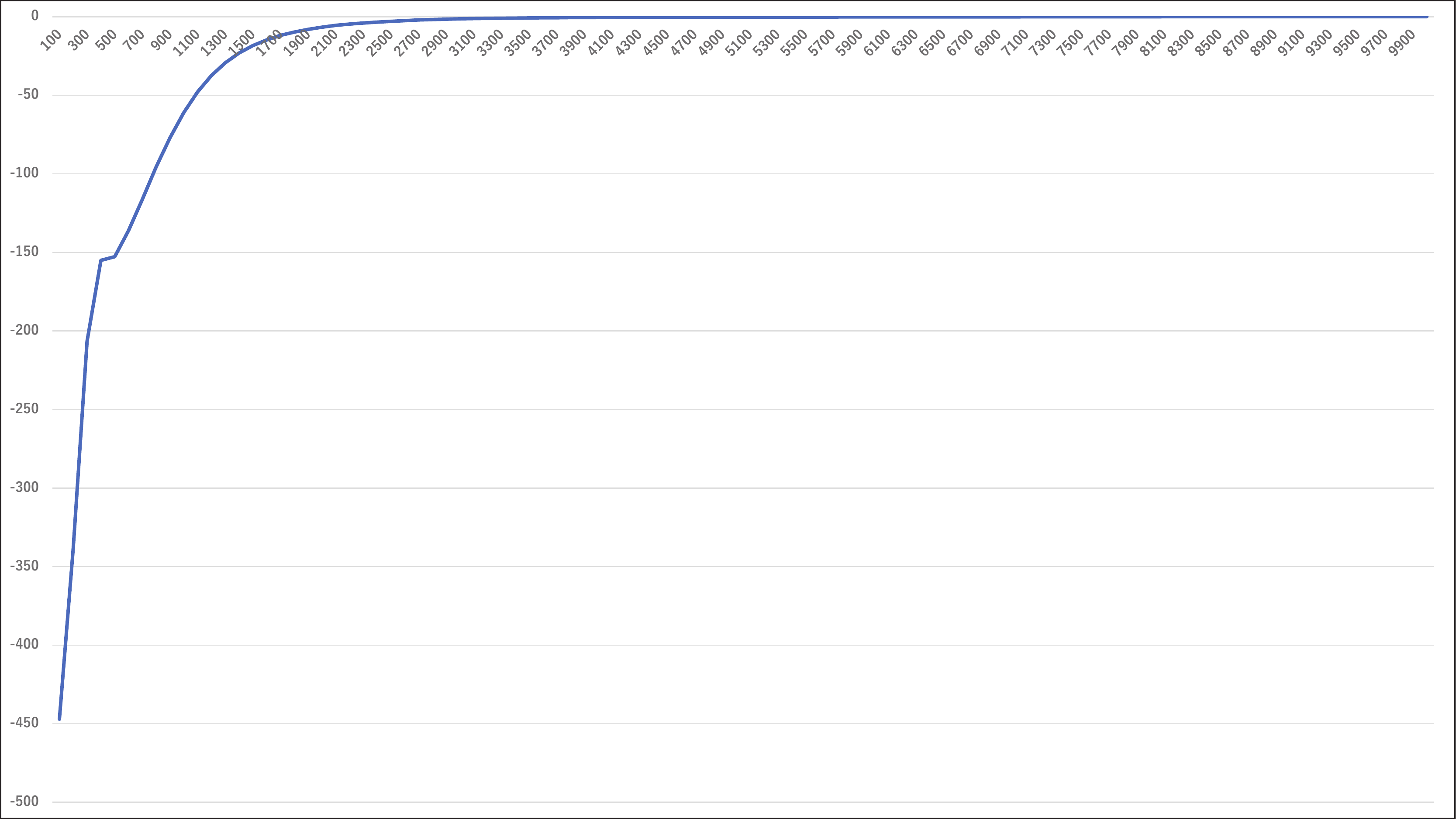}
\subcaption{Nug18}
\label{fig:Nug18}
\end{minipage} &
\begin{minipage}{0.3\columnwidth}
\centering
\includegraphics[width = 50mm,pagebox=cropbox,clip]{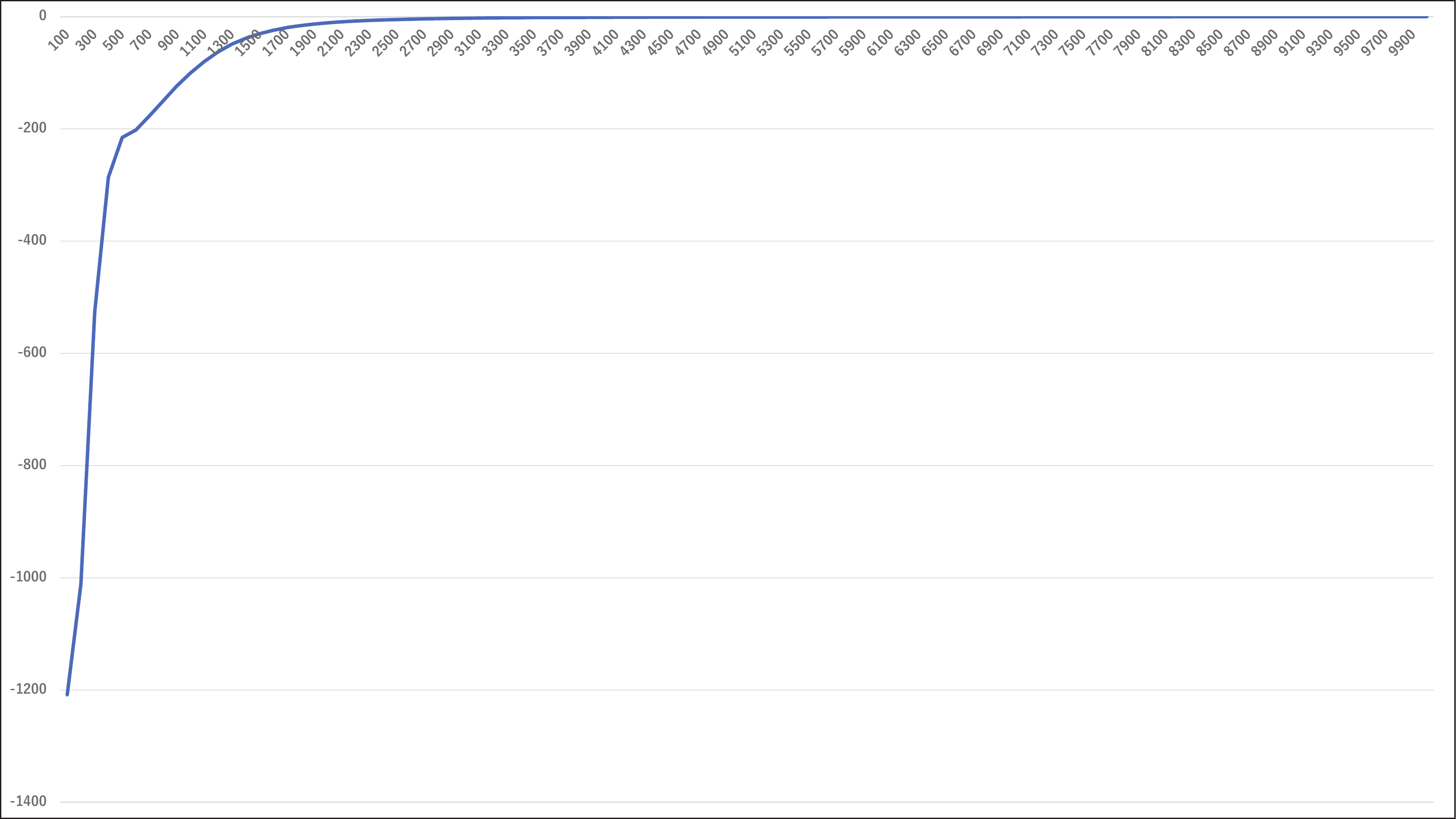}
\subcaption{Nug20}
\label{fig:Nug20} 
\end{minipage} &
\begin{minipage}{0.3\columnwidth}
\centering
\includegraphics[width = 50mm,pagebox=cropbox,clip]{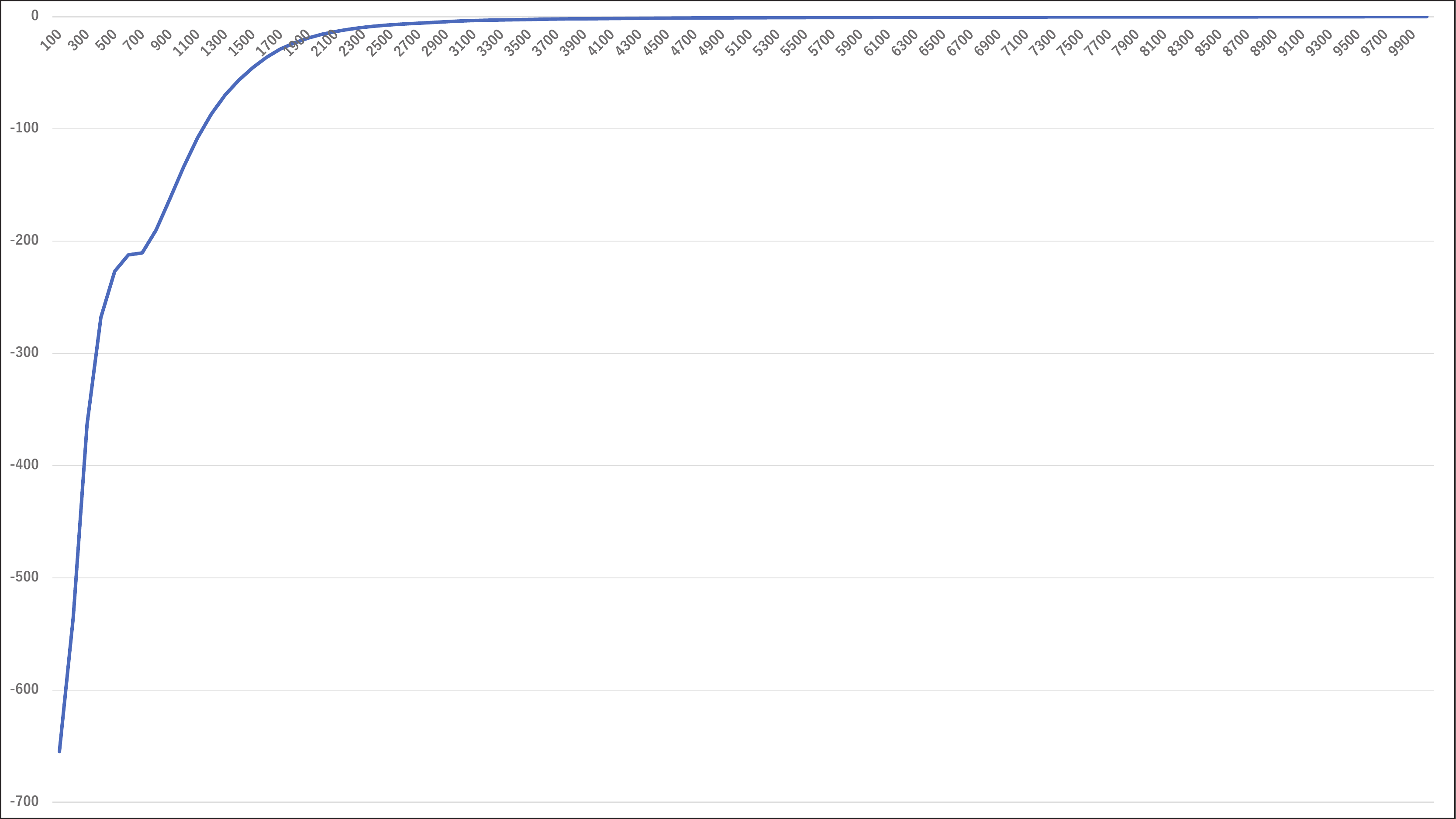}
\subcaption{Nug21}
\label{fig:Nug21} 
\end{minipage} \\
\begin{minipage}{0.3\columnwidth}
\centering
\includegraphics[width = 50mm,pagebox=cropbox,clip]{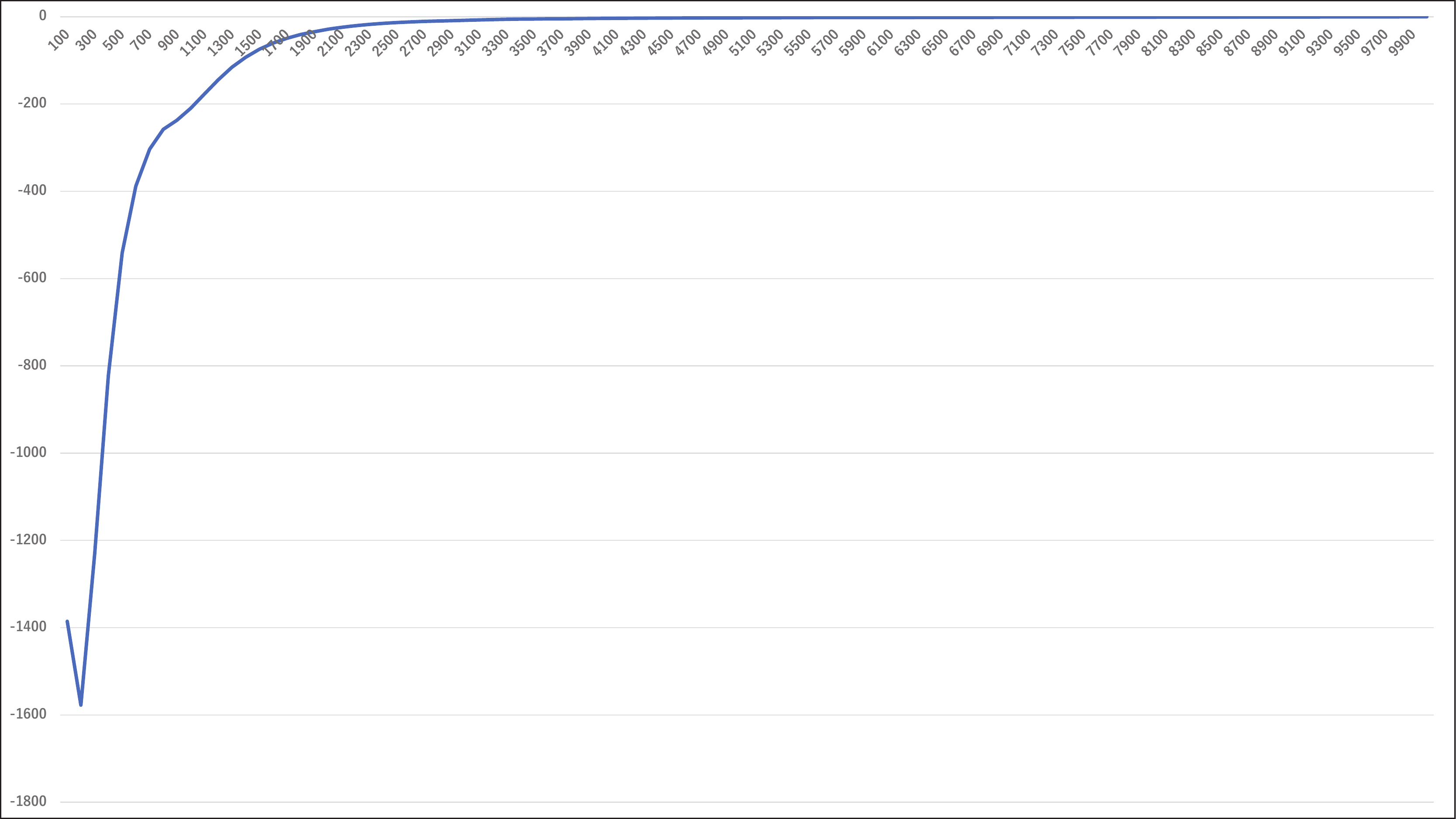}
\subcaption{Nug22}
\label{fig:Nug22}
\end{minipage} &
\begin{minipage}{0.3\columnwidth}
\centering
\includegraphics[width = 50mm,pagebox=cropbox,clip]{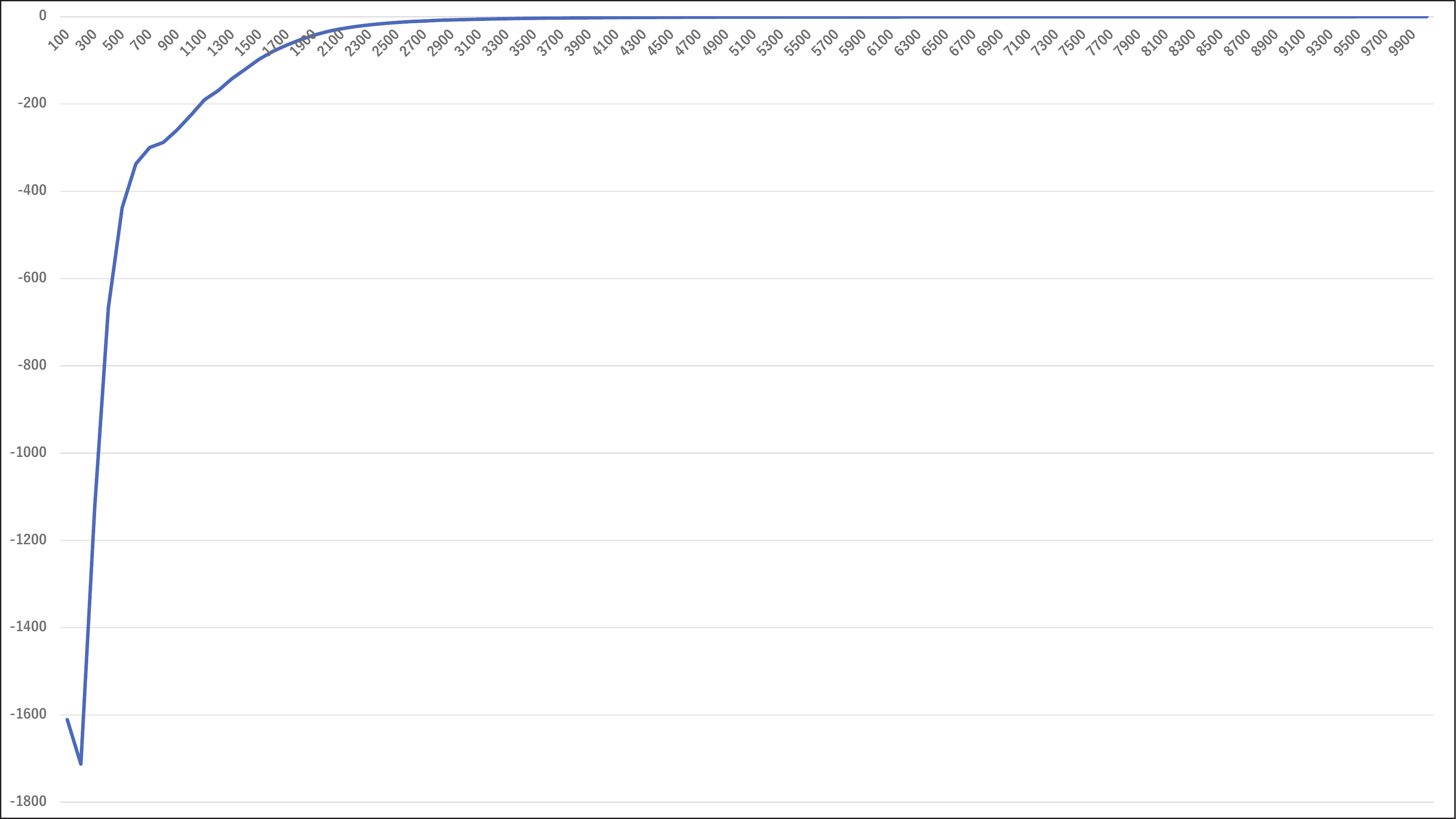}
\subcaption{Nug24}
\label{fig:Nug24}
\end{minipage} &
\begin{minipage}{0.3\columnwidth}
\centering
\includegraphics[width = 50mm,pagebox=cropbox,clip]{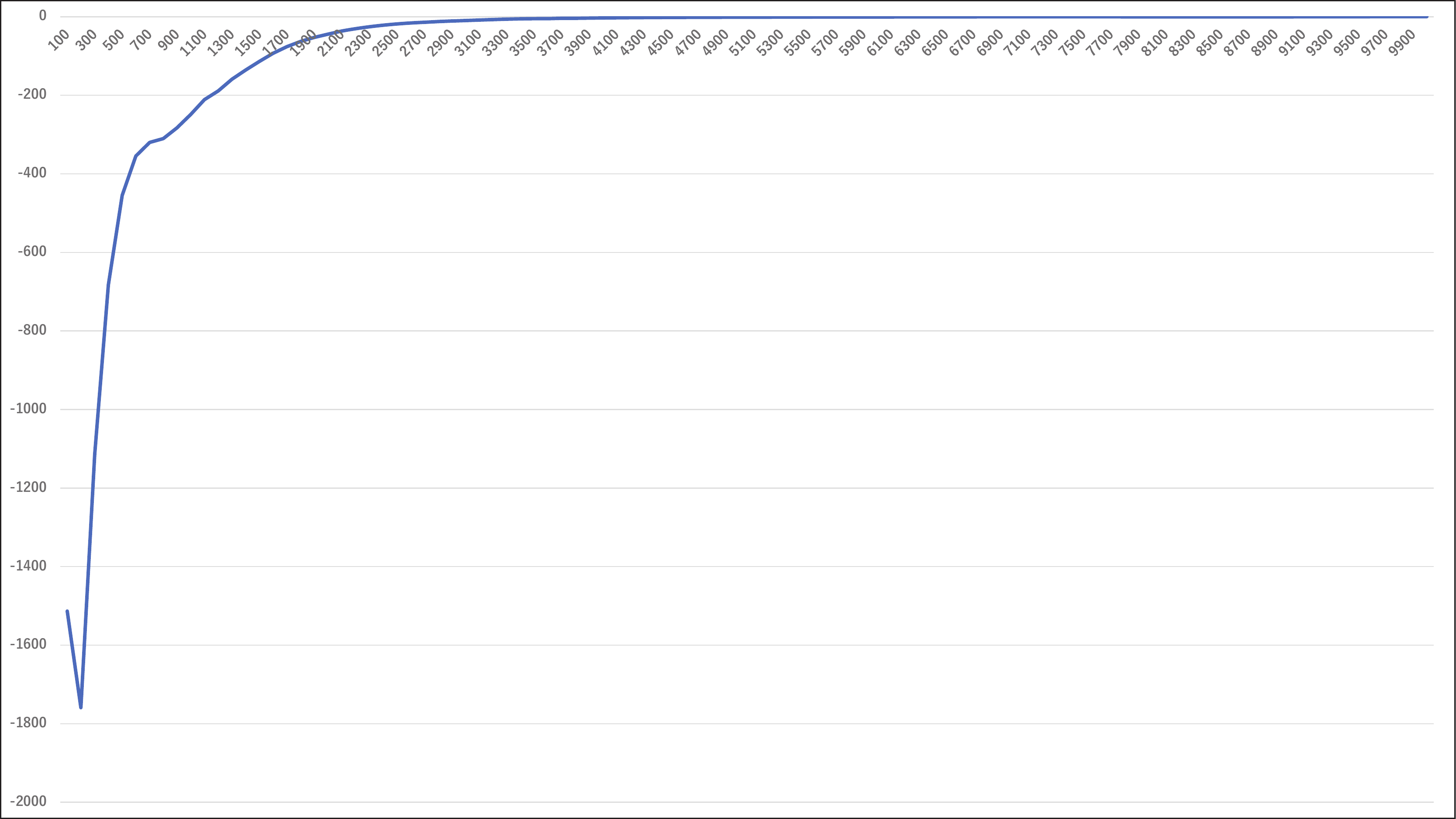}
\subcaption{Nug25}
\label{fig:Nug25}
\end{minipage} \\
\begin{minipage}{0.3\columnwidth}
\centering
\includegraphics[width = 50mm,pagebox=cropbox,clip]{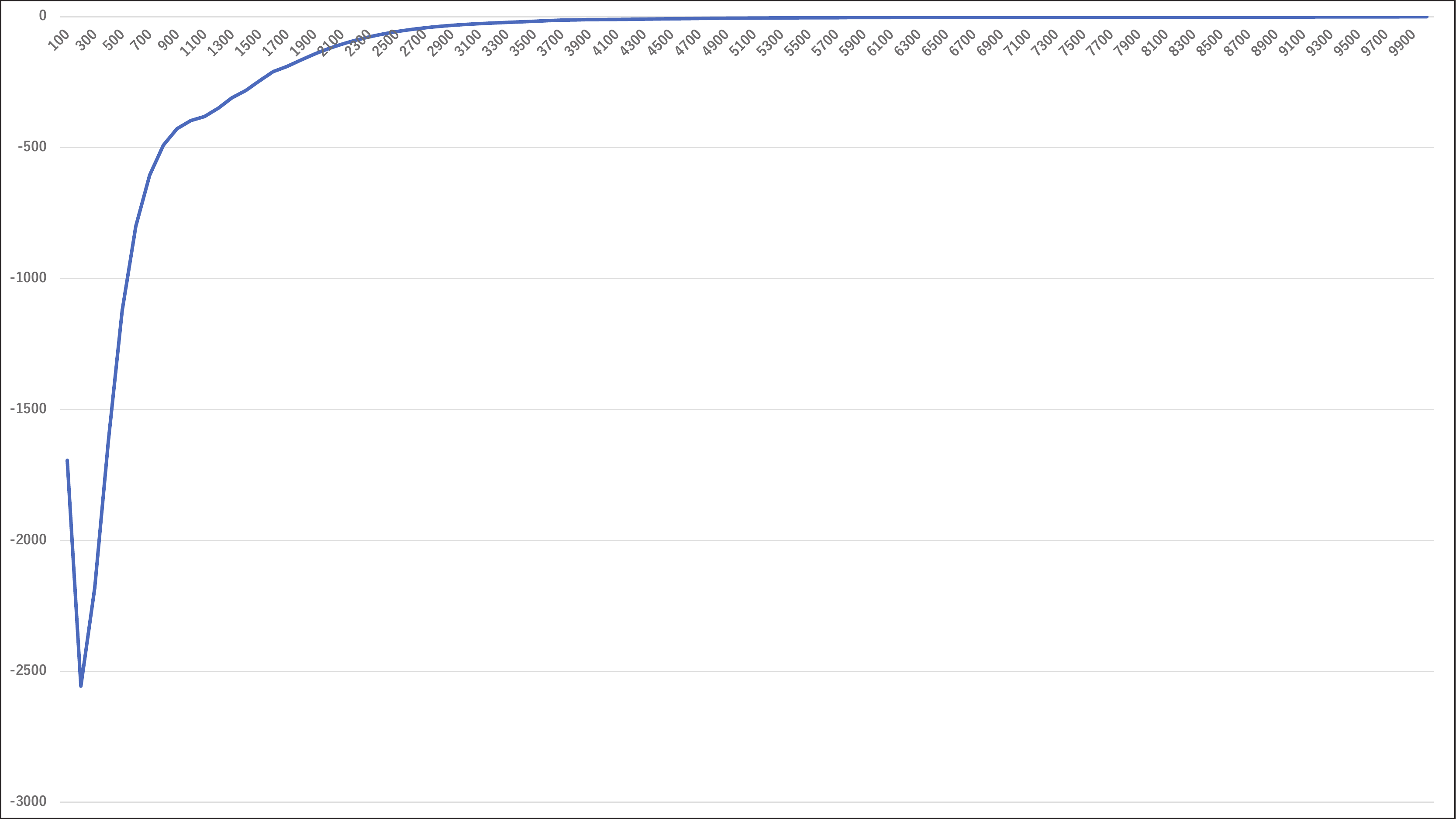}
\subcaption{Nug27}
\label{fig:Nug27}
\end{minipage} &
\begin{minipage}{0.3\columnwidth}
\centering
\includegraphics[width = 50mm,pagebox=cropbox,clip]{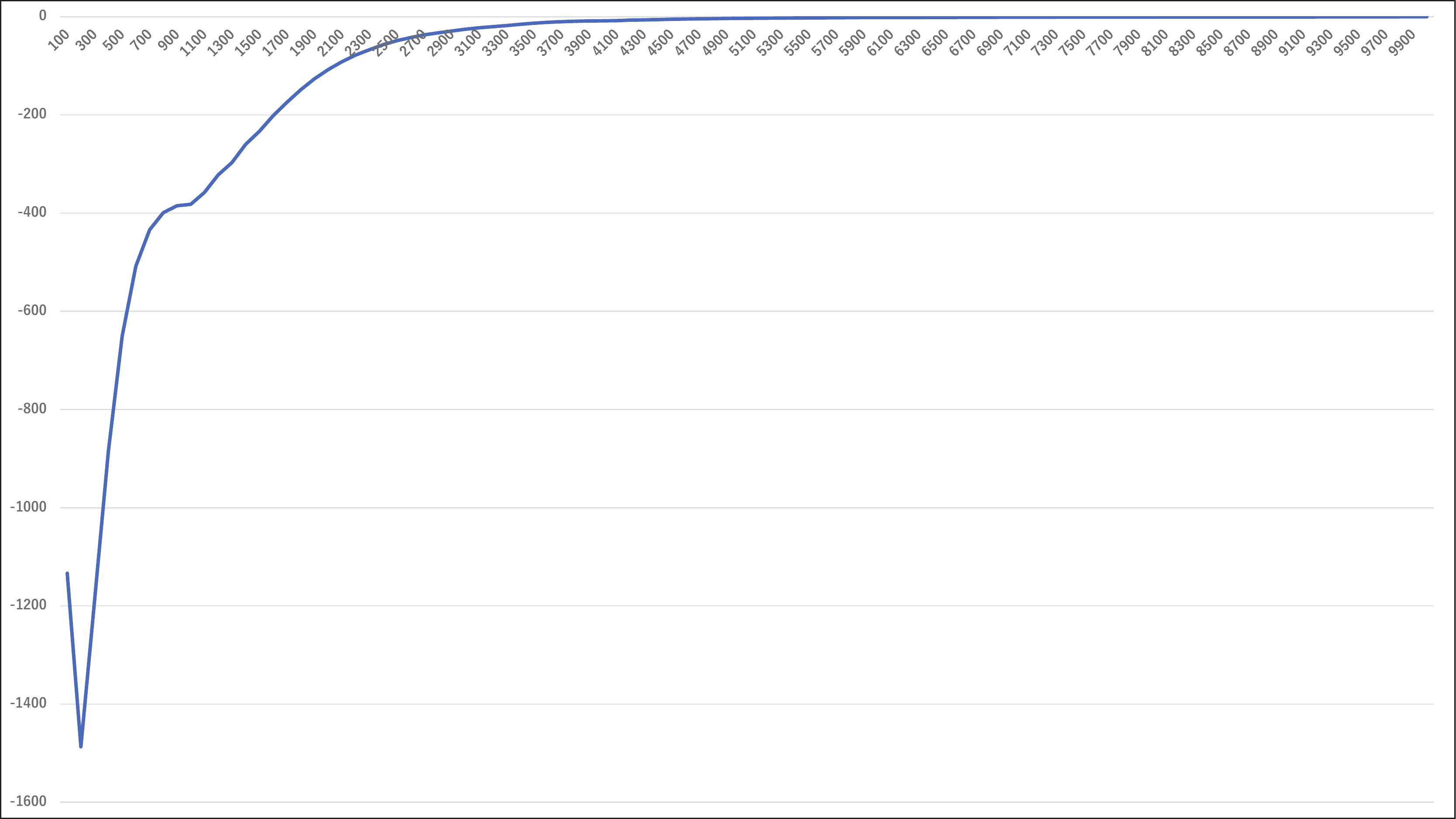}
\subcaption{Nug28}
\label{fig:Nug28}
\end{minipage} &
\begin{minipage}{0.3\columnwidth}
\centering
\includegraphics[width = 50mm,pagebox=cropbox,clip]{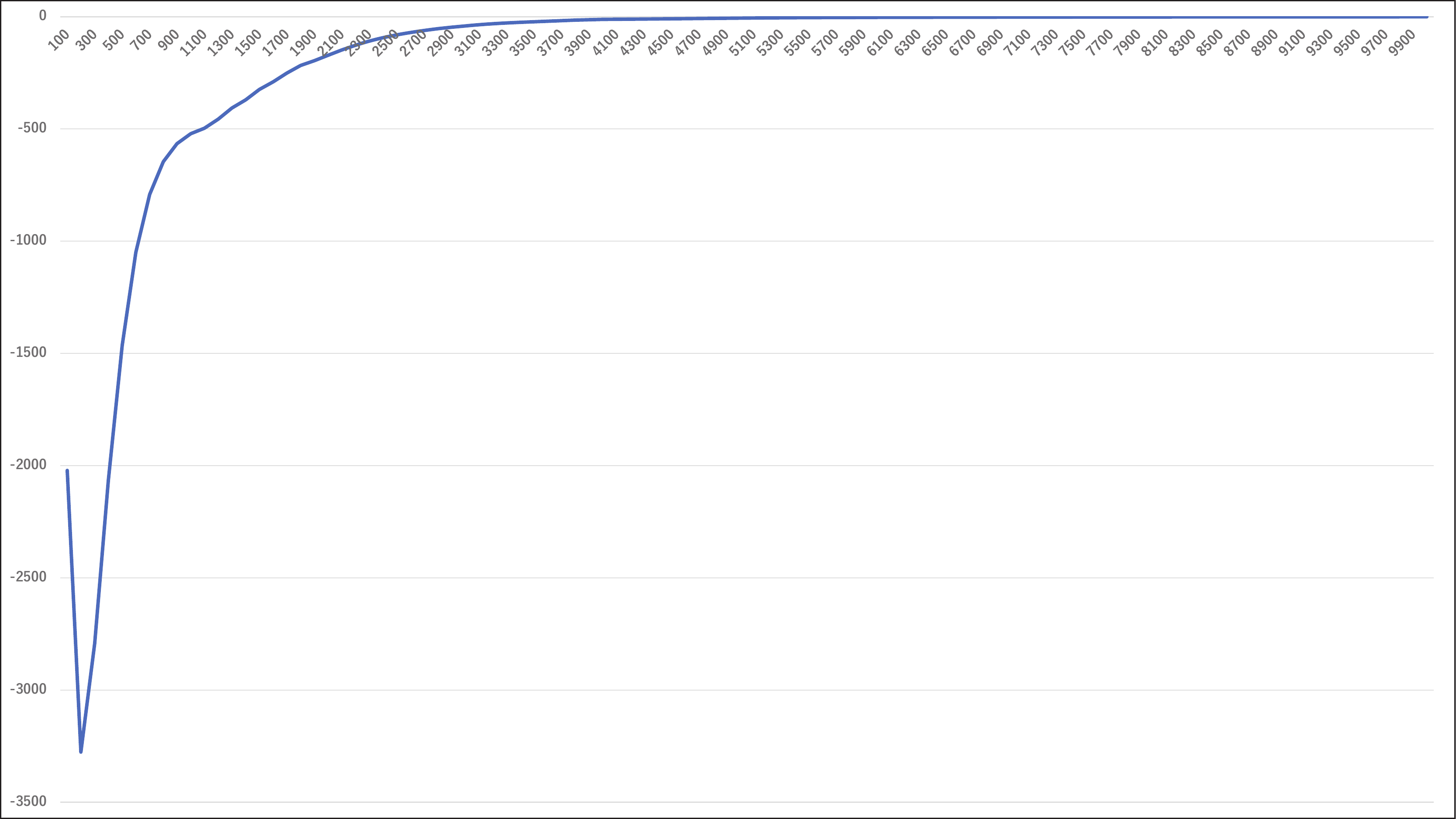}
\subcaption{Nug30}
\label{fig:Nug30}
\end{minipage}
\end{tabular}
\caption{Results for   ``Nug'' instances}
\label{fig:Nug}
\end{figure}

\clearpage

\begin{figure}[H]
\begin{tabular}{ccc}
\begin{minipage}{0.3\columnwidth}
\centering
\includegraphics[width = 50mm,pagebox=cropbox,clip]{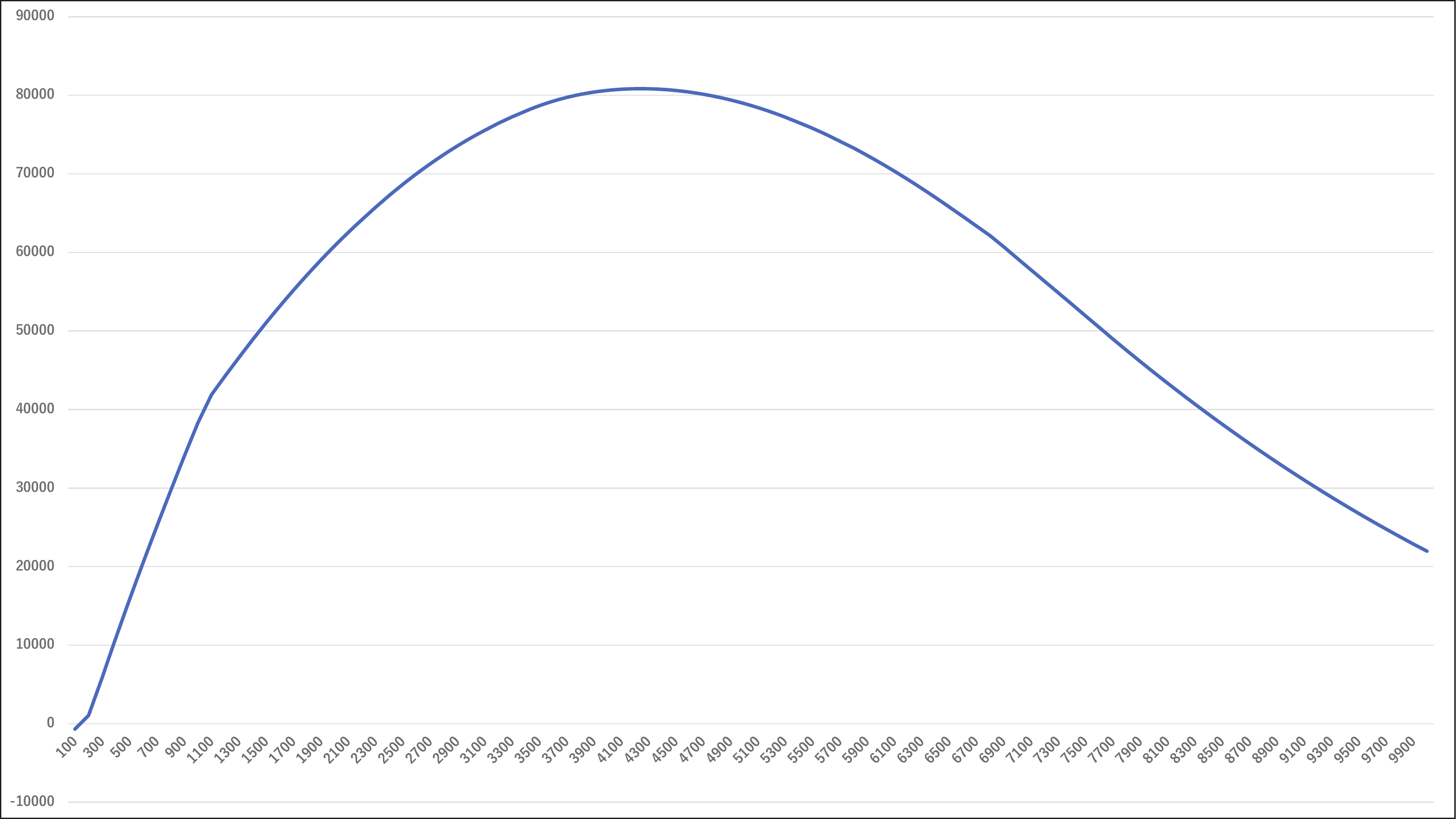}
\subcaption{Tai12a}
\label{fig:Tai12a}
\end{minipage} &
\begin{minipage}{0.3\columnwidth}
\centering
\includegraphics[width = 50mm,pagebox=cropbox,clip]{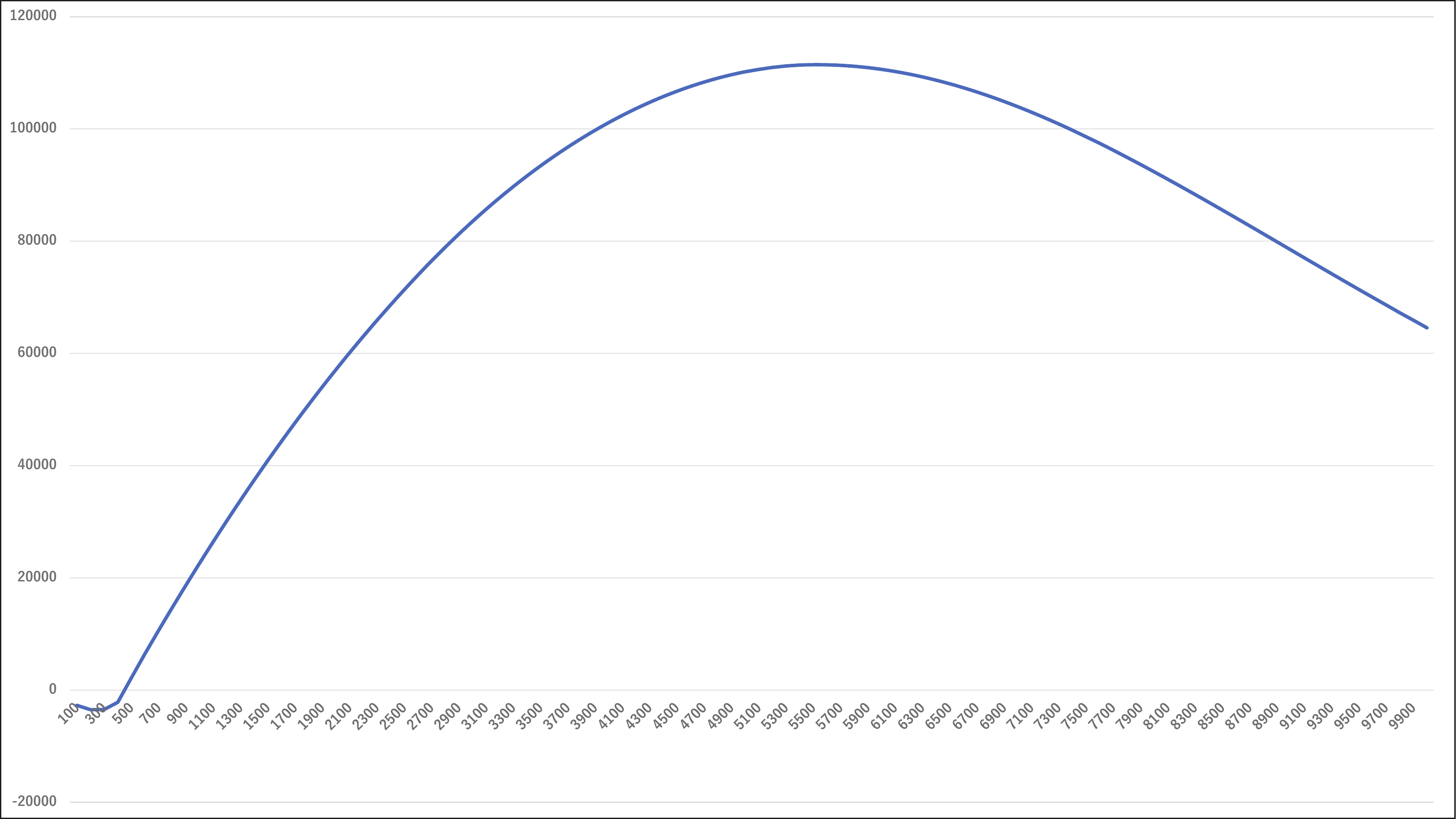}
\subcaption{Tai15a}
\label{fig:Tai15a}
\end{minipage} &
\begin{minipage}{0.3\columnwidth}
\centering
\includegraphics[width = 50mm,pagebox=cropbox,clip]{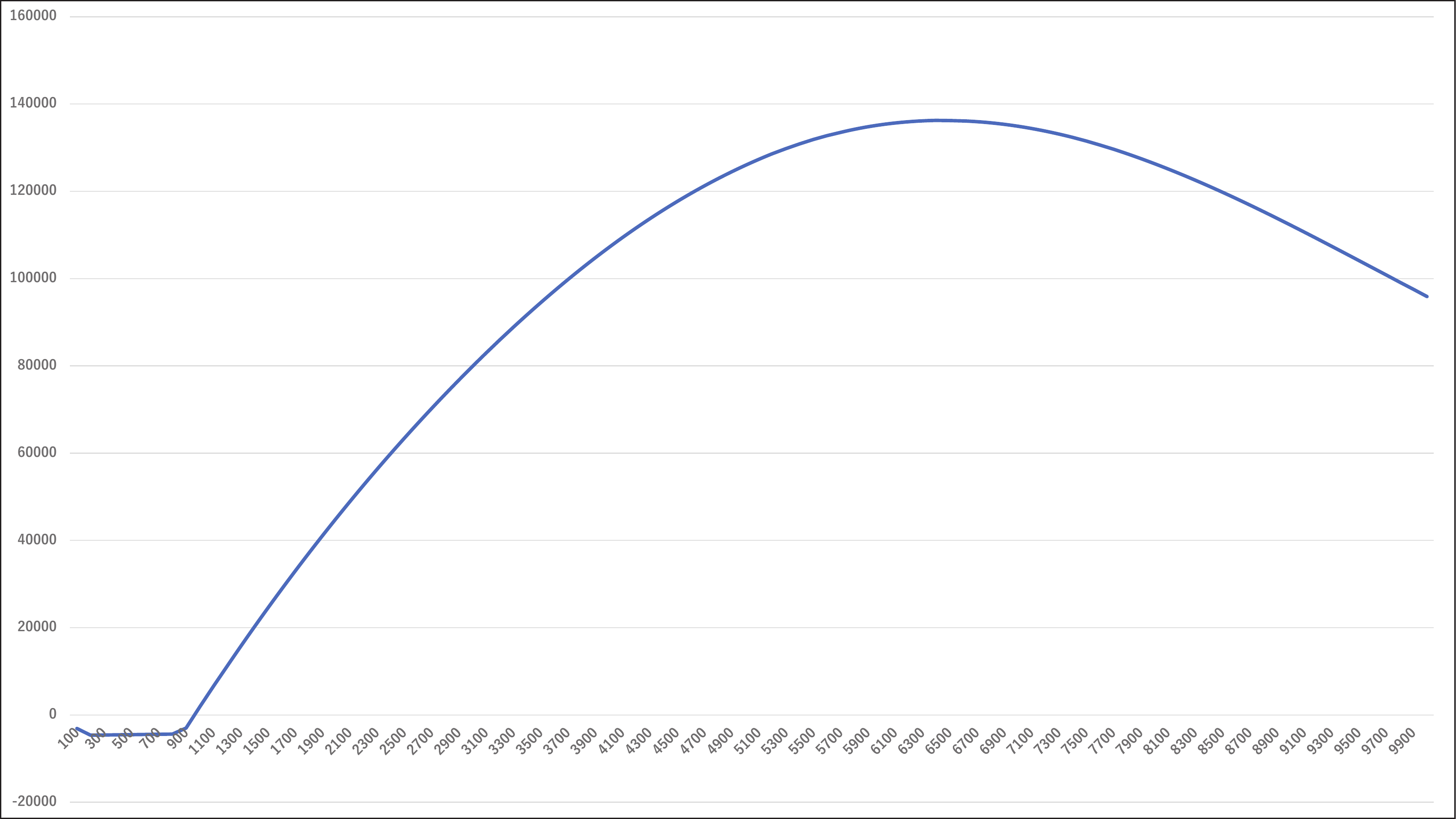}
\subcaption{Tai17a}
\label{fig:Tai17a}
\end{minipage} \\
\begin{minipage}{0.3\columnwidth}
\centering
\includegraphics[width = 50mm,pagebox=cropbox,clip]{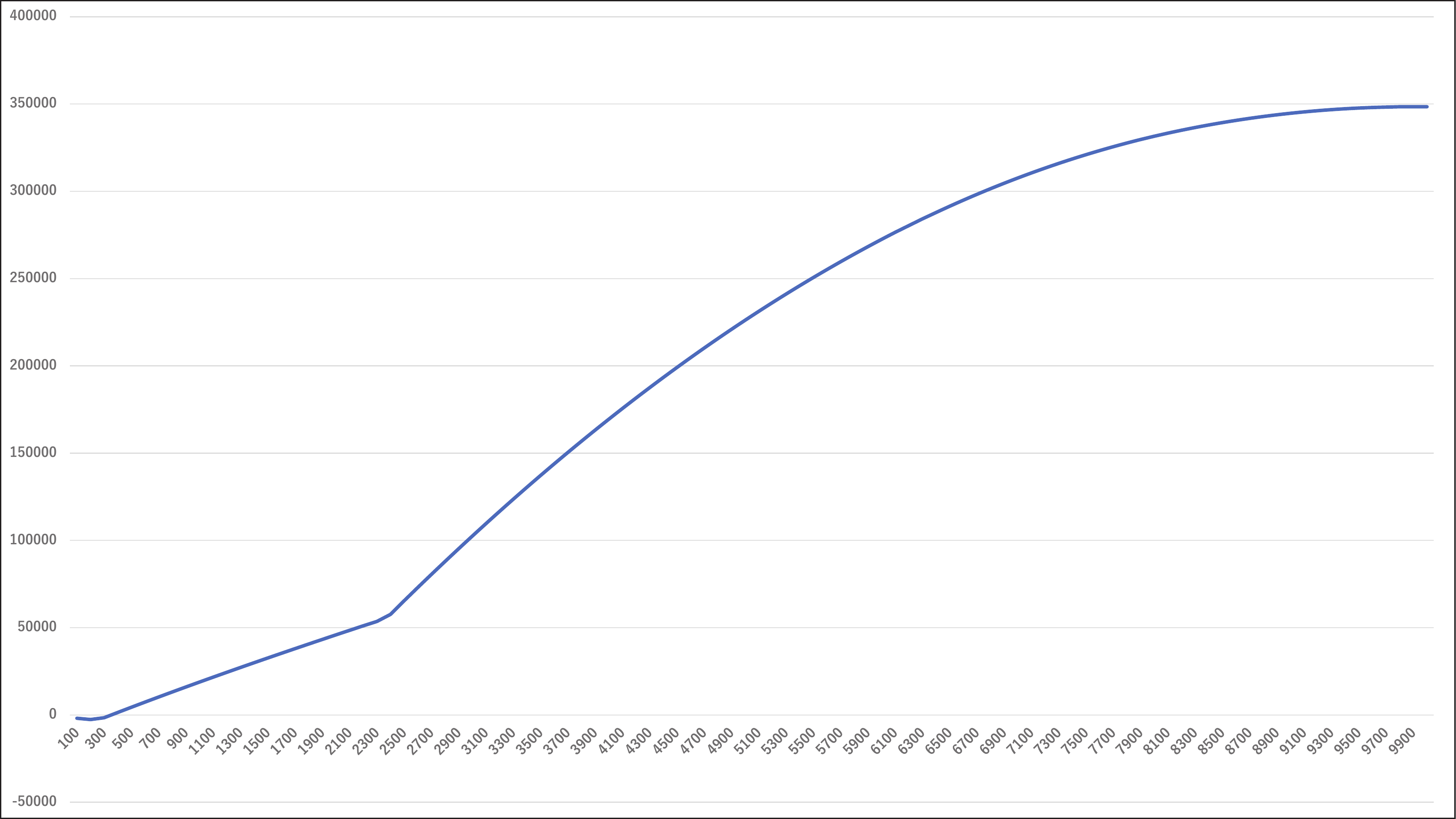}
\subcaption{Tai20a}
\label{fig:Tai20a}
\end{minipage} &
\begin{minipage}{0.3\columnwidth}
\centering
\includegraphics[width = 50mm,pagebox=cropbox,clip]{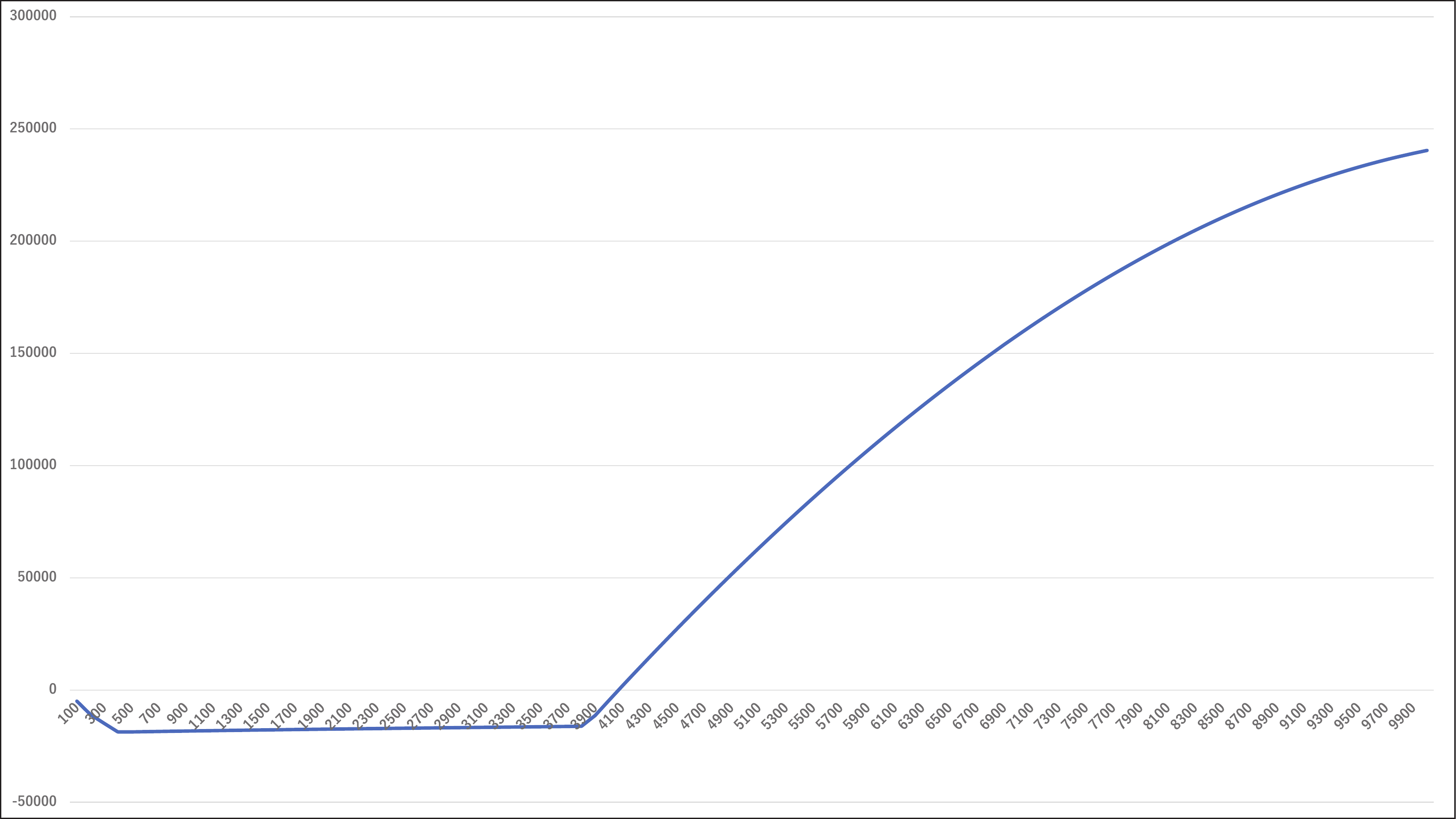}
\subcaption{Tai25a}
\label{fig:Tai25a}
\end{minipage} &
\begin{minipage}{0.3\columnwidth}
\centering
\includegraphics[width = 50mm,pagebox=cropbox,clip]{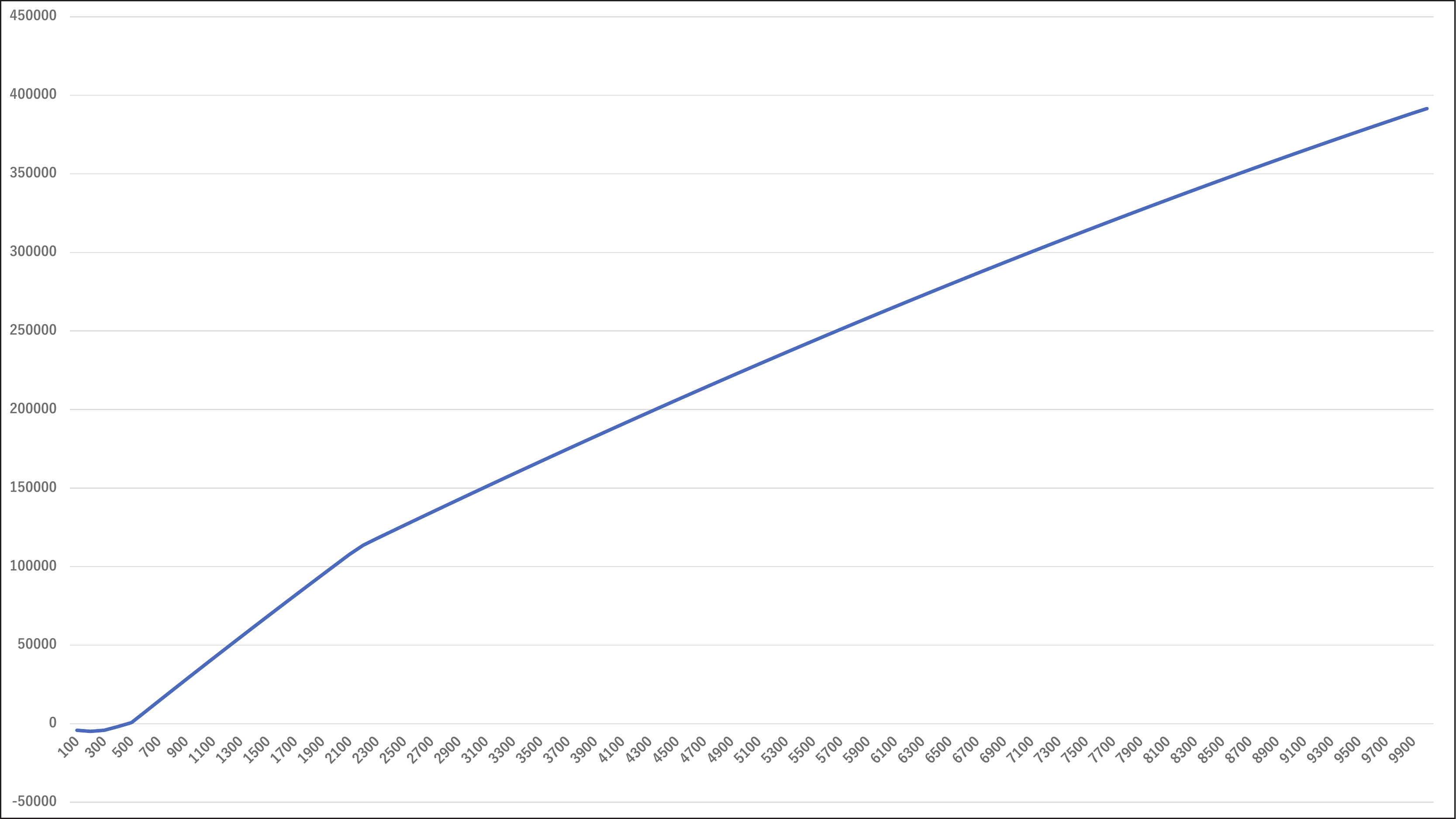}
\subcaption{Tai30a}
\label{fig:Tai30a}
\end{minipage} \\
\begin{minipage}{0.3\columnwidth}
\centering
\includegraphics[width = 50mm,pagebox=cropbox,clip]{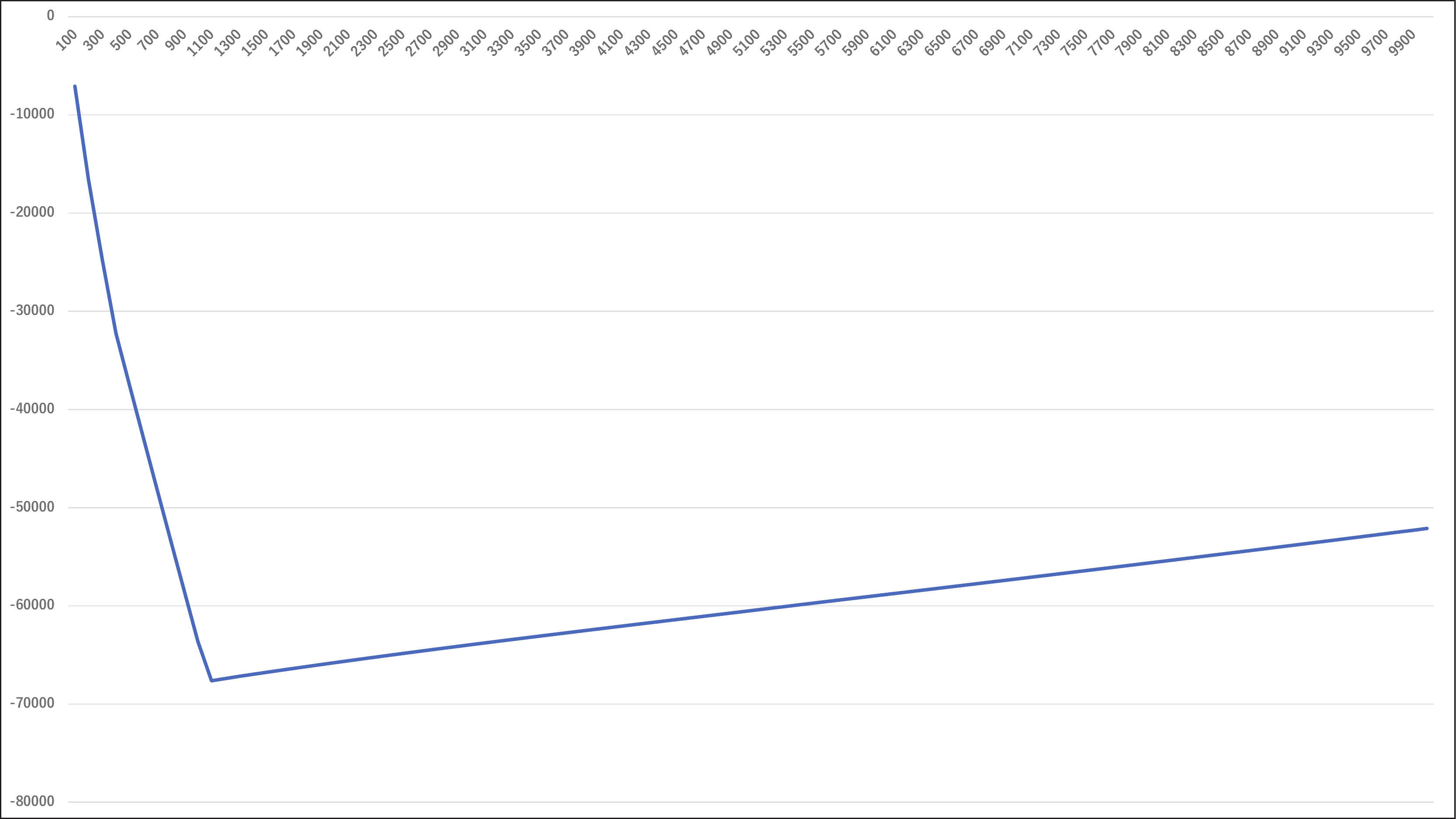}
\subcaption{Tai35a}
\label{fig:Tai35a}
\end{minipage} & & 
\end{tabular}
\caption{Results for  ``Tai-a'' instances}
\label{fig:Tai-a}
\end{figure}

\begin{figure}[H]
\begin{tabular}{ccc}
\begin{minipage}{0.3\columnwidth}
\includegraphics[width = 50mm,pagebox=cropbox,clip]{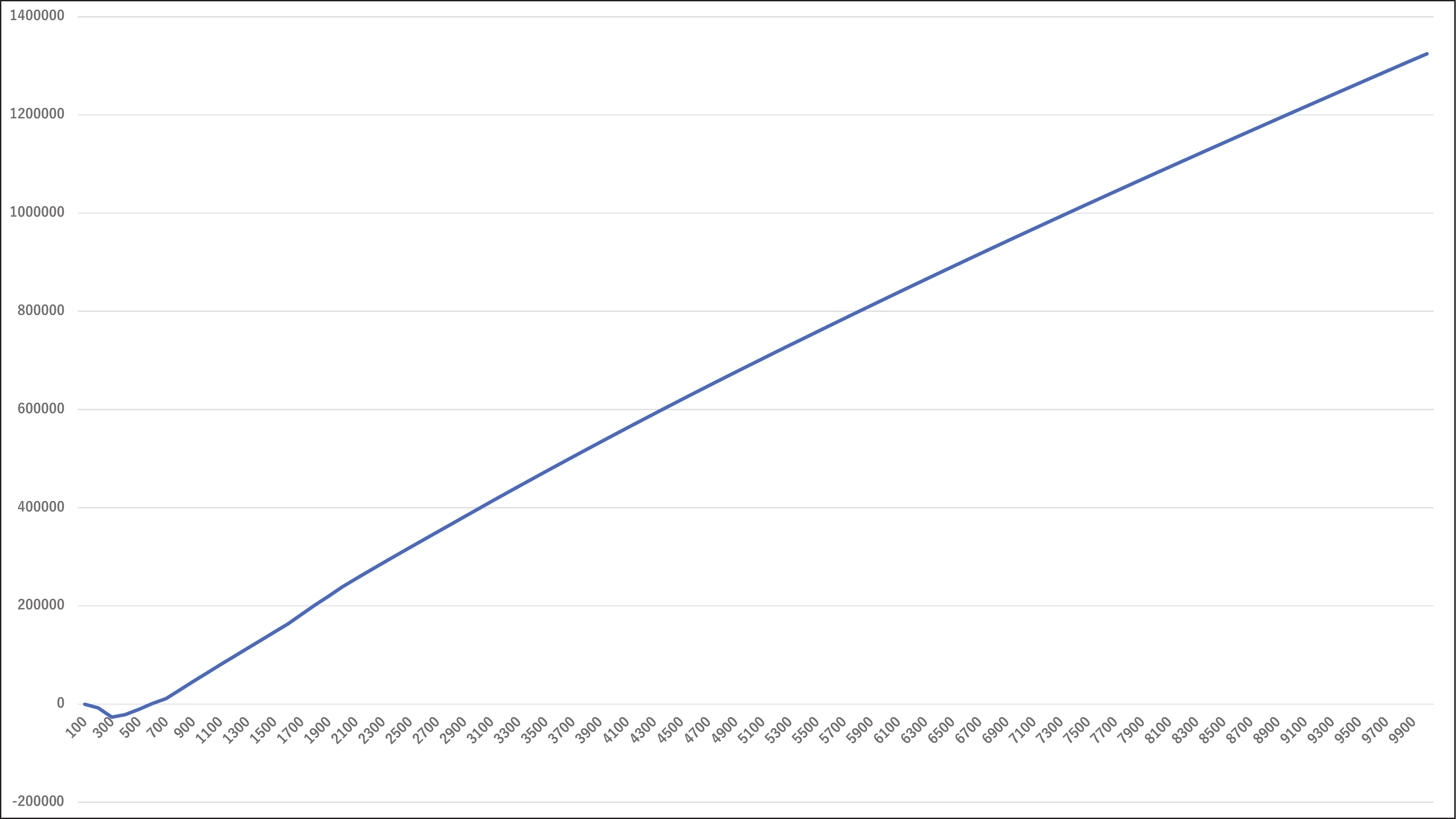}
\subcaption{Els19}
\label{fig:Els19}
\end{minipage} & 
\begin{minipage}{0.3\columnwidth}
\includegraphics[width = 50mm,pagebox=cropbox,clip]{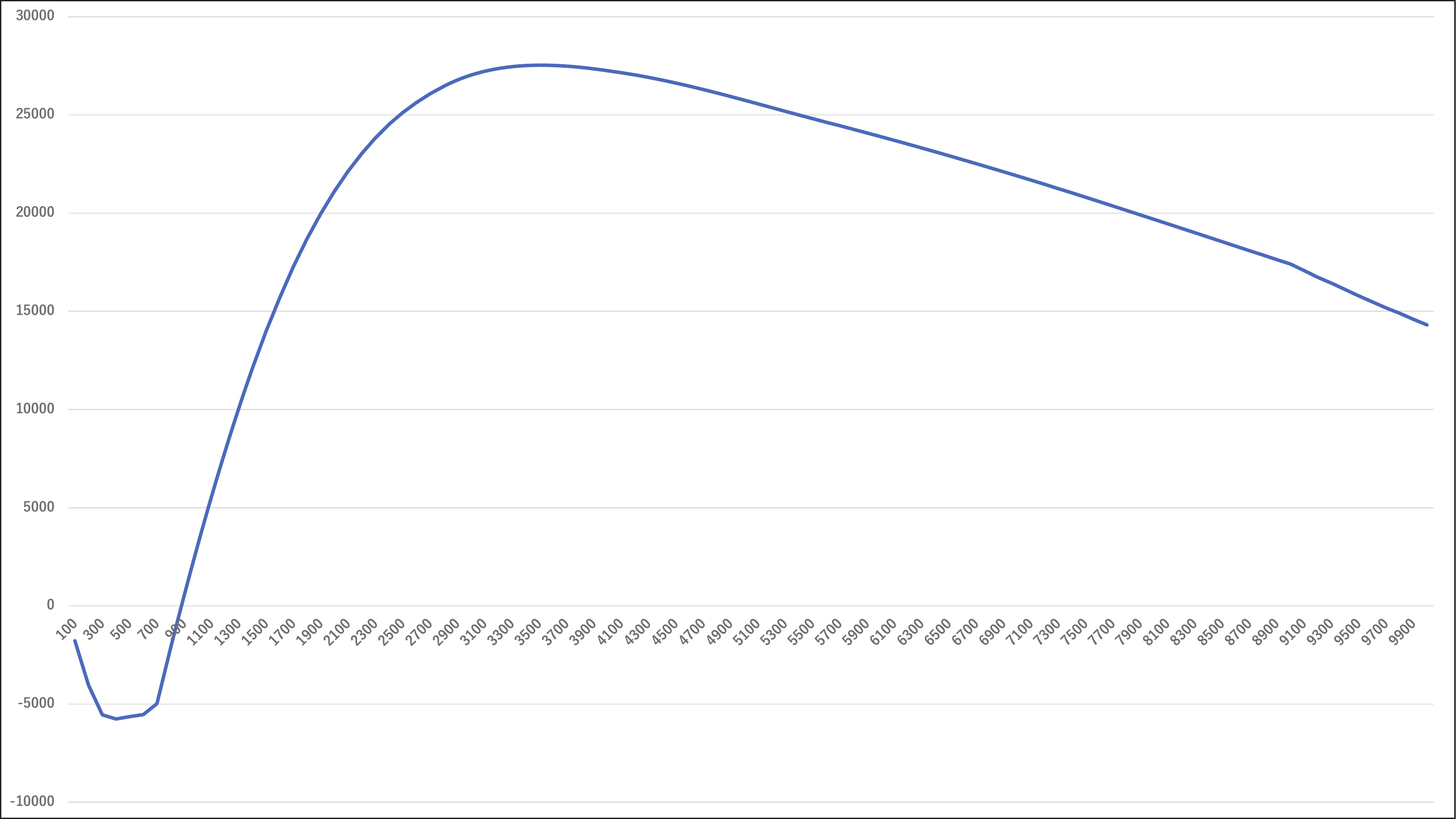}
\subcaption{Tho30}
\label{fig:Tho30}
\end{minipage} &
\end{tabular}
\caption{Results for ``Els19'' and ``Tho30'' instances}
\label{fig:ElsandTho}
\end{figure}

\begin{figure}[H]
\centering
\includegraphics[width=12.5cm,height = 4.5cm,pagebox=cropbox,clip]{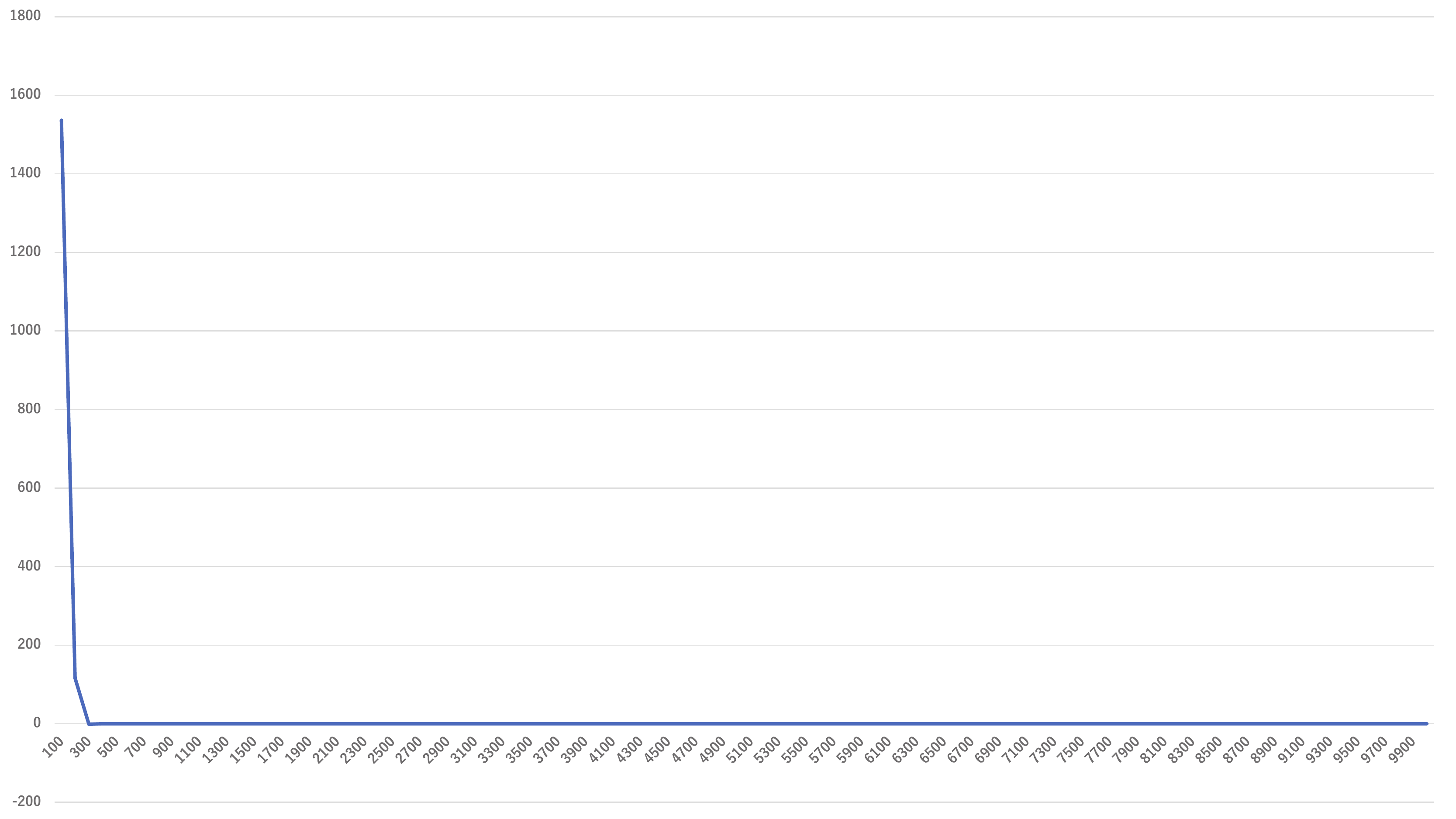}
\caption{Difference between the lower bounds for ``Had12''}
\label{fig:Had12_difference}
\end{figure}

\begin{table}[H]
\begin{center}
\begin{tabular}{lrrrrr} \hline
Prob. &\ \ Opt. & \ \ Centering LB& \ \ Centering \#Iter. & \ \ Standard LB & \ \ Standard \#Iter. \\ \hline
Had12 & 1652&1651.666717 &1400 &1651.918164 &300  \\ \hline
Had14 & 2724&2723.602388 &2200 &2723.683821&500  \\ \hline
Had16 & 3720&3719.512521&2000 &3719.322802&400  \\ \hline
Had18 & 5358&5357.509444&5400 &5357.523159&1800  \\ \hline
Had20 & 6922&6921.514997&3800 &6921.599327&1400 \\ \hline
\end{tabular}
\caption{Comparison of results for ``Had'' instances}
\label{tb:tableHad}
\end{center}
\end{table}

\begin{table}[H]
\begin{center}
\begin{tabular}{lrrr} \hline
Prob. &\ \ Slow Down LB & \ \ Small Diff. \#Iter.& \ \ LB at 10000 Iter.\\ \hline
Nug12 &567&2000&567.99 \\ \hline
Nug14 &1009&2700&1010.11 \\ \hline  
Nug15 &1140&3000&1140.56\\ \hline
Nug16a &1598&3200&1599.27\\ \hline
Nug16b &1217&2900&1218.25\\ \hline
Nug17 &1706&3400&1707.10 \\ \hline
Nug18 &1892&3300&1893.53 \\ \hline
Nug20 &2505&4200&2506.33 \\ \hline
Nug21 &2380&5100&2381.89 \\ \hline
Nug22 &3525&8300&3528.53 \\ \hline
Nug24 &3399&5100&3401.05 \\ \hline
Nug25 &3624&5500&3625.85 \\ \hline
Nug27 &5125&9300&5129.36 \\ \hline
Nug28 &5023&7300&5025.56 \\ \hline
Nug30 &5947&8700&5949.43 \\ \hline
\end{tabular}
\caption{Comparison of results for ``Nug'' instances (1)}
\label{tb:tableNug1}
\end{center}
\end{table}

\begin{table}[H]
\begin{center}
\begin{tabular}{lrrrrr} \hline
Prob. & \ \ Opt. & \ \ Centering LB & \ \ Centering \#Iter. & \ \ Standard LB & \ \ Standard \#Iter. \\ \hline
Nug12 & 578& 567.0016543&2100 &567.0206964&500  \\ \hline
Nug14 & 1014&1009.018699&2600 &1009.047161&600  \\ \hline  
Nug15 & 1150&1140.052527&3500 &1140.04133&1100  \\ \hline
Nug16a & 1610&1598.013623&3000 &1598.175632&1200  \\ \hline
Nug16b & 1240&1217.009463&2700 &1217.175222&1000  \\ \hline
Nug17 & 1732&1706.06183&3400 &1706.139849&1400  \\ \hline
Nug18 & 1930&1892.09605&3000 &1892.124008&1200  \\ \hline
Nug20 & 2570&2505.029389&3900 &2505.033302&1400  \\ \hline
Nug21 & 2438&2380.015468&4200 &2380.053905&1800  \\ \hline
Nug22 & 3596&3525.102227&5000 &3525.137032&2400  \\ \hline
Nug24 & 3488&3399.109253&4300 &3399.234062&1900 \\ \hline
Nug25 & 3744&3624.029764&4700 &3624.147168&1900 \\ \hline
Nug27 & 5234&5125.1364&5800 &5125.054708&2500  \\ \hline
Nug28 & 5166&5023.01236&5700 &5023.162624&2900  \\ \hline
Nug30 & 6124&5947.069915&6600 &5947.159729& 1700 \\ \hline
\end{tabular}
\caption{Comparison of results for ``Nug'' instances (2)}
\label{tb:tableNug2}
\end{center}
\end{table}

\end{document}